\pgfplotsset{compat=1.15}
\DeclareFontFamily{U}{tipa}{}
\DeclareFontShape{U}{tipa}{m}{n}{<->tipa10}{}
\newcommand{\arc@char}{{\usefont{U}{tipa}{m}{n}\symbol{62}}}%
\newcommand{\arc}[1]{\mathpalette\arc@arc{#1}}
\newcommand{\arc@arc}[2]{%
  \sbox0{$\m@th#1#2$}%
  \vbox{
    \hbox{\resizebox{\wd0}{\height}{\arc@char}}
    \nointerlineskip
    \box0
  }%
}
\let\pa=\partial
\let\al=\alpha
\let\g=\gamma
\let\d=\delta
\let\z=\zeta
\let\lam=\lambda
\let\s=\sigma
\let\t=\theta
\let\f=\frac
\let\G= \Gamma
\let\D=\Delta
\let\Om=\Omega
\let\e=\varepsilon
\let\pa=\partial
\let\ri=\rightarrow
\let\na=\nabla
\def\non{\nonumber}
\newcommand{\beq}{\begin{equation}}
\newcommand{\eeq}{\end{equation}}
\newcommand{\beqo}{\begin{equation*}}
\newcommand{\eeqo}{\end{equation*}}
\newcommand{\ben}{\begin{eqnarray}}
\newcommand{\een}{\end{eqnarray}}
\newcommand{\beno}{\begin{eqnarray*}}
\newcommand{\eeno}{\end{eqnarray*}}
\newtheorem{theorem}{Theorem}[section]
\newtheorem{definition}[theorem]{Definition}
\newtheorem{lemma}[theorem]{Lemma}
\newtheorem{proposition}[theorem]{Proposition}
\newtheorem{Theorem}{Theorem}[section]
\newtheorem{Remark}[Theorem]{Remark}
\theoremstyle{remark}
\newtheorem{step}{Step}
\newtheorem{case}{Case}
\newtheorem{rmk}{Remark}[section]
\newcommand{\dist}{\mathrm{dist}}
\newcommand{\BR}{\mathbb{R}}
\newcommand{\cl}{\mathcal{L}^1}
\newcommand{\hg}{\hat{\G}^1_{\e,\g}}
\begin{document}
\title[Triple junction problem on the plane]{On the triple junction problem on the plane without symmetry hypotheses}

\author{Nicholas D. Alikakos}
\address{Department of Mathematics, University of Athens (EKPA), Panepistemiopolis, 15784 Athens, Greece}
\email{nalikako@math.uoa.gr}

\author{Zhiyuan Geng}
\address{Basque Center for Applied Mathematics, Alameda de Mazarredo 14
48009 Bilbao, Bizkaia, Spain}
\email{zgeng@bcamath.org}

\date{\today}

\begin{abstract}
    We investigate the Allen-Cahn system
    \begin{equation*}
    \Delta u-W_u(u)=0,\quad u:\mathbb{R}^2\rightarrow\mathbb{R}^2,
    \end{equation*}
    where $W\in C^2(\mathbb{R}^2,[0,+\infty))$ is a potential with three global minima. We establish the existence of an entire solution $u$ which possesses a triple junction structure. The main strategy is to study the global minimizer $u_\varepsilon$ of the variational problem 
    \begin{equation*}
    \min\int_{B_1} \left( \f{\varepsilon}{2}|\nabla u|^2+\f{1}{\varepsilon}W(u) \right)\,dz,\ \ u=g_\varepsilon \text{ on }\partial B_1.
    \end{equation*}
    The point of departure is an energy lower bound that plays a crucial role in estimating the location and size of the diffuse interface. We do not impose any symmetry hypothesis on the solution. 
\end{abstract}

\keywords{triple junction solution, Allen-Cahn system, energy lower bound, diffuse interface}

\maketitle

\section{Introduction}
This paper is concerned with the existence of an entire, bounded, minimizing solution to the system 
\beq\label{main equation}
\D u-W_u(u)=0, \quad u:\BR^2\ri\BR^2,
\eeq
where $W$ is a triple-well potential with three global minima. A key feature of the present work is that we make no a priori hypotheses of symmetry on the solution. Specifically for $W$ we assume
\vspace{3mm}
\begin{enumerate}
    \item[(H1).]  $W\in C^2(\BR^2;[0,+\infty))$, $\{z: \,W(z)=0\}=\{a_1,a_2,a_3\}$, $W_u(u)\cdot u >0$ if $|u|>M$ and 
    \beqo
     c_2|\xi^2| \geq \xi^TW_{uu}(a_i)\xi\geq  c_1|\xi|^2,\; i=1,2,3.
    \eeqo 
    for some positive constants $c_1<c_2$ depending on $W$. 
\item[(H2).] For $i\neq j$, $i,j\in \{1,2,3\}$, let $U_{ij}\in W^{1,2}(\BR,\BR^2)$ be an 1D minimizer of the \emph{action}
\beqo
\sigma_{ij}:=\min \int_{\BR}\left(\f12|U_{ij}'|^2+W(U_{ij})\right)\,d\eta, \quad \lim\limits_{\eta\ri-\infty}U_{ij}(\eta)=a_i,\ \lim\limits_{\eta\ri+\infty}U_{ij}(\eta)=a_j.
\eeqo
$\sigma_{ij}$ satisfies
\beq\label{cond on sigma}
\sigma_{ij}\equiv \sigma>0\quad \text{ for }i\neq j\in\{1,2,3\} \text{ and some constant }\sigma.
\eeq
\end{enumerate}

Note that \eqref{main equation} is the Euler-Lagrange equation corresponding  to 
\begin{equation}\label{energy functional}
J(u,\Omega):=\int_\Om \left( \f12|\na u|^2+W(u) \right)\,dz,\ \ \forall \text{ bounded open set } \Om\subset \BR^2. 
\end{equation}

It is an easy fact that if $J(u,\BR^2)<\infty$ for a solution $u$ of \eqref{main equation}, then $u$ is a constant (cf. \cite{alikakos2011some}). For this reason the construction of the solution cannot be achieved directly. Moreover, the definition of the \emph{minimizing solution} is as follows.
\begin{definition}
$u$ is a \emph{minimizing solution} of \eqref{main equation} in the sense of De Giorgi if 
\begin{equation}
    J(u,\Om)\leq J(u+v,\Om), \quad \forall \text{ bounded open set } \Om\subset \BR^2, \ \forall  v\in C_0^1(\Om).
\end{equation}
\end{definition}

We note that the hypothesis (H2) on $W$ that all $\sigma_{ij}$ are equal is mainly for convenience and does not imply any symmetry for the potential or the solution. In particular, one can construct examples of non-symmetric potentials (i.e. $W$ is not in the equivariant class of the symmetry group of the equilateral triangle) with three connections that have equal actions $\sigma_{ij}\equiv \sigma$. Take $W(z)=\vert z^3-1 \vert^2$, $z\in\mathbb{C}$. The connections $U_{ij}$ are explicitly known (see \cite[Page 79]{afs-book}) and they avoid a neighborhood $B_\rho$ of $z=0$. When $\rho$ is sufficiently small one can modify $W$ in $B_\rho$ in a non-symmetric way and keep the connections unaffected at the same time. %Moreover, our results can be extended without much difficulty to a more general potential $W$. 

For the scalar solution $u:\BR^n\ri \BR$ of the \eqref{main equation}, there is a relationship (cf. \cite{pacard2012role, wei2012geometrization, chodosh2019lecture}) between minimizing solutions and minimal surfaces. Many deep results have been obtained in the process of understanding this relationship, see \cite{ambrosio2000entire,farina20111d,ghoussoub1998conjecture,savin2009regularity,wang2017new,del2011giorgi,del2013entire,guaraco2018min,liu2017global,pacard2013stable} and the references therein. In the vector case, minimizing solutions are related to minimal partitions, see Baldo\cite{Baldo}, Sternberg\cite{sternberg1988effect} and Fonseca \& Tartar \cite{fonseca1989gradient}.

The solution we are after can be considered as the diffuse analog of the singular minimal cone on the plane (unique up to translations and rotations), the so-called \emph{triod}, that provides a minimal partition $\mathcal{P}=\{D_1,D_2,D_3\}$ of the plane into three $120^\circ$ degree sectors. Minimality is related to Steiner's classical result which states that given three points $A,B,C$ on the plane such that the corresponding triangle has no angle greater or equal to $120^\circ$, then if $P$ is a point that minimizes the sum of the distances $|P-A|+|P-B|+|P-C|$, the line segments $PA,PB,PC$ form three $120^\circ$ degree angles. 

We now state our main results. 

\begin{theorem}\label{main theorem}
Fix $\g< \min\{\g_0,\min\limits_{i,j\in\{1,2,3\}} \f12 \vert a_i-a_j\vert, \sqrt{\f{\sigma}{20C_W}}\}$, where $\g_0, C_W$ are constants defined later. There is a constant $C_0$ which depends on $\g,W$ such that under the hypotheses (H1), (H2),  there exists an entire, bounded minimizing solution of \eqref{main equation} with the following triple junction structure. 
\begin{enumerate}
     \item[a.] For every $r>0$, there exists a point $P(r)$ such that $\vert u(P(r))-a_1\vert \leq \g$.
    \item[b.] There exists $\{Q_j\}_{j=1}^\infty\cup \{R_j\}_{j=1}^\infty$ such that 
    \beqo
    \begin{split}
    &\vert u(Q_j)-a_2\vert\leq \g,\quad \vert u(R_j)-a_3\vert\leq \g,\\
    &\dist(Q_j, 0)\leq 32jC_0,\quad \dist(R_j, 0)\leq 32jC_0.
    \end{split}
    \eeqo
    The pairwise distance of any two points from $\{Q_j\}_{j=1}^\infty\cup \{R_j\}_{j=1}^\infty$ is larger than or equal to $6C_0$.
    \item[c.] For each $Q_j$, there exists $P_j$ such that 
    \beqo
    \dist(Q_j,P_j)\leq C_0,\quad \vert u(P_j)-a_1\vert\leq \g;
    \eeqo
    This property also holds for $\{R_j\}$.
    \item[d.]  For any sequence $r_k\ri+\infty$ one can extract a subsequence, still denoted by $\{r_k\}$,  such that 
\beq\label{convergence in main thm}
u(r_k x)\rightarrow u_0(x)\ \text{ in }L^1_{loc}(\BR^2),
\eeq
where $u_0(x)=\sum\limits_{i=1}^3a_i \mathcal{\chi}_{D_i}$. Here $\chi$ is the characteristic function. $\mathcal{P}=\{D_1,D_2,D_3\}$ provides a minimal partition of $\BR^2$ into three sectors of the angle $\f23\pi$ and $\pa \mathcal{P}$ is a triod centered at $0$. The partition $\mathcal{P}$ may depend on $\{r_k\}$. Also there exists a sequence $\{r'_k\}_{k=1}^\infty$, such that for each $\xi\in D_j\, (j=1,2,3)$, $\vert \xi \vert=1$, 
\beq
\lim\limits_{k\ri\infty}\f{1}{r_k'}\int_0^{r_k'} u(s\xi)\,ds=a_j,   \tag{\ref*{convergence in main thm}'} \label{convergence in main thm along a rays}
\eeq
and the convergence is uniform for $\xi$ in compact sets of $\mathbb{S}^1\setminus \pa\mathcal{P}$.
\end{enumerate}
\end{theorem}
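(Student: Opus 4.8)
The plan is to construct the desired entire solution via a compactness argument applied to the rescaled minimizers $u_\varepsilon$ of the constrained variational problem on $B_1$, with boundary data $g_\varepsilon$ chosen to be a smooth interpolation that, at the macroscopic scale, looks like the triod partition $u_0$. First I would set up the blow-up: rescaling $v_\varepsilon(z) = u_\varepsilon(\varepsilon z)$ transforms the $\varepsilon$-functional into the unit-scale Allen--Cahn energy $J(\cdot, B_{1/\varepsilon})$, and I would show that on any fixed ball $B_R$ the energies $J(v_\varepsilon, B_R)$ are bounded uniformly in $\varepsilon$; this is where the energy lower bound advertised in the abstract is used, in the contrapositive form of an upper bound matching the $\Gamma$-limit (a $BV$-type estimate: the total energy in $B_1$ is $\sigma \mathcal{H}^1(\partial\mathcal{P}\cap B_1) + o(1)$, and excess energy is controlled). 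Standard elliptic estimates ($W\in C^2$, $L^\infty$ bound from (H1) via the maximum-principle-type argument $W_u(u)\cdot u>0$ for $|u|>M$) then give $C^{2,\alpha}_{loc}$ bounds, so along a subsequence $v_\varepsilon \to u$ in $C^2_{loc}(\BR^2)$, and $u$ solves \eqref{main equation}. Minimality in the De Giorgi sense is inherited because each $v_\varepsilon$ is a minimizer on $B_{1/\varepsilon}$ and any compactly supported competitor for $u$ is eventually an admissible perturbation; lower semicontinuity of $J$ plus strong $C^1_{loc}$ convergence closes the argument. Nonconstancy of $u$ — hence the structure in (a)--(c) — follows from a careful tracking of where the diffuse interface sits: the lower bound forces, in each dyadic annulus, the existence of points where $u$ is $\gamma$-close to each of $a_1, a_2, a_3$, with the quantitative spacing $6C_0$ and the containment $\dist(Q_j, P_j)\le C_0$ coming from the fact that between two phases there must be a transition layer of controlled width, and three phases meeting forces a junction point in a bounded neighborhood.

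For part (d) I would argue as follows. Take $r_k \to \infty$ and consider the macroscopic rescaling $u(r_k x)$; the energy estimate (now at the original scale, after undoing the $\varepsilon$-rescaling, so effectively $\varepsilon \sim 1/r_k$) gives $J_{1/r_k}(u(r_k\cdot), B_1) \le C$ with the density concentrating, so by the compactness theorem for the Modica--Mortola/Baldo $\Gamma$-convergence in the vector case (Baldo~\cite{Baldo}, Fonseca--Tartar~\cite{fonseca1989gradient}), a subsequence converges in $L^1_{loc}$ to $u_0 = \sum_i a_i \chi_{D_i}$ where $\mathcal{P} = \{D_1, D_2, D_3\}$ minimizes the weighted-perimeter functional $\sum_{i<j}\sigma_{ij}\mathcal{H}^1(\partial D_i \cap \partial D_j)$ among partitions of $\BR^2$. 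Since all $\sigma_{ij} \equiv \sigma$ by (H2), this is just $\sigma$ times total interface length, and the unique (up to rotation) minimizing cone through the origin with three unbounded pieces is the $120^\circ$ triod — this is exactly the Steiner/triod minimality recalled in the introduction, combined with the fact that the blow-down of a nonconstant minimizer is a minimizing cone and cannot be a single straight line because parts (a)--(c) guarantee all three values $a_1, a_2, a_3$ appear on every large ball. That the partition may depend on $\{r_k\}$ is simply because we only extract a subsequential limit and the cone's orientation is not pinned down; the center is at $0$ because the junction points $P_j$ produced in (c) stay within bounded distance of the phase-$2$ and phase-$3$ witnesses, which themselves escape to infinity in a controlled radial fashion.

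For the refined radial statement \eqref{convergence in main thm along a rays}, the idea is to upgrade the $L^1_{loc}$ convergence to a statement about radial averages. Fix a direction $\xi \in D_j$ with $|\xi| = 1$; for $\xi$ in a compact subset of $\mathbb{S}^1 \setminus \partial\mathcal{P}$, the ray $\{s\xi : s > 0\}$ eventually lies in the interior of the sector $D_j$ away from the interface. Using the energy bound, one shows the set where $u$ is far from $a_j$ along such rays has density going to zero: quantitatively, $\frac{1}{r} \mathcal{L}^1(\{s \in (0,r) : |u(s\xi) - a_j| > \delta\}) \to 0$ as $r \to \infty$, uniformly in $\xi$ on compacts, by a Fubini-type slicing of the energy excess (the energy in the cone over a compact arc of $\mathbb{S}^1 \setminus \partial\mathcal{P}$, at macroscopic scale, is $o(r_k'^{\,2} \cdot \text{something})$ — more precisely the interface contributes length zero there). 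Since $u$ is bounded, control of the bad set in measure plus boundedness gives $\frac{1}{r_k'}\int_0^{r_k'} u(s\xi)\, ds \to a_j$; choosing $r_k'$ along the same subsequence that realizes the $L^1_{loc}$ convergence (or a further subsequence), and using equicontinuity in $\xi$ of these averages (from the $C^1$ bound on $u$), promotes pointwise-in-$\xi$ convergence to uniform-on-compacts convergence. The main obstacle I anticipate is the bookkeeping in parts (a)--(c): extracting the discrete collections $\{Q_j\}, \{R_j\}, \{P_j\}$ with the precise quantitative distance bounds $32jC_0$, $6C_0$, $C_0$ requires a delicate iteration of the energy lower bound on a sequence of balls marching to infinity, keeping track of how the diffuse interface can (or cannot) ``close up,'' and ruling out that the solution degenerates to a one-dimensional (two-phase) profile — this is precisely the part that, in the symmetric case, would be handled by a reflection/equivariance argument and which here must be replaced by purely energetic and topological reasoning.
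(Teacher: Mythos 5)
Your high-level strategy — rescale $u_\varepsilon$ at scale $\varepsilon$, extract a $C^2_{loc}$ limit $u$ that inherits minimality, derive (a)--(c) from diffuse-interface geometry, and establish (d) by a Modica-type blow-down combined with Baldo's $\Gamma$-convergence and the classification of one-dimensional minimizing cones in $\BR^2$ — is essentially the paper's strategy, and your treatment of (d) (including the point that (a)--(c) rule out the straight line, and the flat-chains/monotonicity input needed for the cone rigidity) is faithful to what the paper does. However, there is a concrete gap that would make the construction fail as written, and you have also left the genuinely hard part of (a)--(c) unaddressed.

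The gap in the construction: you rescale at the origin, $v_\varepsilon(z)=u_\varepsilon(\varepsilon z)$. But the junction of $u_\varepsilon$ is not at $0$; without symmetry one only knows (and the paper has to prove this via the refined lower bound $3\sigma - C\varepsilon^{1/2}$ together with the upper bound) that its ``center'' $(x^*,y^*)$ satisfies $|x^*|,|y^*|\le C\varepsilon^{1/4}$. At the blow-up scale $\varepsilon$, a displacement of order $\varepsilon^{1/4}$ is $\varepsilon^{-3/4}\to\infty$, so blowing up at $0$ may well produce a one-dimensional heteroclinic profile or even a constant, not a triple junction. The paper's resolution is to blow up at a carefully constructed point $P_\varepsilon$ lying on the curve $\hat\Gamma^1_{\varepsilon,\gamma}$ and equidistant to $\Gamma^2_{\varepsilon,\gamma}$ and $\Gamma^3_{\varepsilon,\gamma}$; producing such a point requires exactly the chain of estimates you have left implicit: (i) the $O(\varepsilon^{1/4})$ localization of the diffuse interface near the triod, which uses the variational maximum principle of \cite{AF2}; (ii) the $O(\varepsilon)$ width estimate $\Gamma^i_{\varepsilon,\gamma}\subset N_{C_0\varepsilon}(\bigcup_{j\neq i}\Gamma^j_{\varepsilon,\gamma})$, proved via the vector Caffarelli--C\'ordoba density estimate of \cite{AF3}; and (iii) an intermediate-value argument along $\hat\Gamma^1_{\varepsilon,\gamma}$ selecting $P_\varepsilon$. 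None of these appears in your proposal, and without them the blow-up center is not pinned down.

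Second, you acknowledge but do not supply the ``delicate iteration'' yielding the precise quantitative statements in (b)--(c). In the paper this is the inductive discretization of $\hat\Gamma^1_{\varepsilon,\gamma}$ (Lemma \ref{New ver of lemma 6.2}): one marches along the arclength-parametrized curve in steps of $8C_0\varepsilon$, classifies the resulting grid points into those closer to $\Gamma^2_{\varepsilon,\gamma}$ versus $\Gamma^3_{\varepsilon,\gamma}$, uses Lemma \ref{claim} (itself another consequence of the upper/lower energy bounds) to guarantee long monochromatic stretches near the two ``legs,'' and then a dyadic halving argument to produce, around $P_\varepsilon$, $2^k$ witnesses of each of the other two phases with pairwise separation $\geq 6C_0\varepsilon$ and containment radius $(32j+1)C_0\varepsilon$. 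The constants $6C_0$ and $32jC_0$ in the statement come directly from this discrete construction, not from a soft ``dyadic annulus'' argument; your sketch of that step is a restatement of the goal rather than a proof. Replacing the symmetry that would otherwise fix the junction at the origin is the main technical content of the theorem, and it is precisely this bookkeeping, together with the correct choice of blow-up center, that your proposal does not carry out.
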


In 1996, Bronsard, Gui and Schatzman \cite{bronsard1996three} established the existence of an entire triple junction solution for the triple-well potential $W$ in the equivariant class of the reflection group $\mathcal{G}$ of the symmetries of the equilateral triangle. Their solution satisfies the estimate $|u(x)-a_i|\leq Ke^{-kd(x,\pa D_i)},\ i=1,2,3$ and hence in particular it connects the minima of $W$ at infinity along rays emanating from the origin, $u(\lam \f{x}{|x|})\ri a_i$ as $\lam\ri+\infty$, for $x\in D_i$, $i=1,2,3$. And thus it is a stronger result on the asymptotic behavior of the solution than our theorem above. In spite of its great importance, this result suffers from the fact that their solution is obtained as a minimizer in the equivariant class $u(gx)=gu(x), g\in \mathcal{G}$, hence is not necessarily even stable under general perturbations. 

Since that time, the problem has been understood for an abstract reflection group on $\BR^n$ via different and quite general methods rendering also the complete stratification of the solution. The general theory under optimal hypotheses on $W$ covers in particular the diffuse analogs of the two miminal singular cones in $\BR^3$: the triod with a spine and the tetrahedral Plateau complex (\cite{taylor1976structure}). The second one was originally obtained in 2008 by Gui and Schatzman \cite{gui2008symmetric}. We refer to the book \cite{afs-book} and to the references therein. 

In 2021, Fusco \cite{fusco} succeeded in establishing essentially the result of \cite{bronsard1996three} in the equivariant class of the rotation subgroup of $\mathcal{G}$ (by $\f23\pi$), thus eliminating the two reflections. This was a significant achievement since the problem now cannot be reduced to a fundamental domain containing a single minimum of $W$. Nevertheless it suffers from the same shortcoming as before, not allowing general perturbations. Symmetry fixes the center of the junction at the origin, which is a fact that simplifies considerably the analysis. 

Schatzman \cite{schatzman2002asymmetric} is probably the only previously known entire minimizing solution that does not assume any symmetry. Her result later was revisited in \cite{fusco2017layered,monteil2017metric,smyrnelis2020connecting}. Triple junction solutions for bounded domains without symmetry assumptions were constructed by Sternberg and Ziemer \cite{sternberg1994local} for clover-shaped domains in $\BR^2$ via $\Gamma$-convergence, and for more general domains by Flores, Padilla and Tonegawa \cite{flores2001higher} by the means of a mountain pass argument. These results do not seem to provide estimates that allow to pass to the limit and establish the existence of a triple junction solution on $\BR^2$. 

We now list some of the key steps in the proof of our results. We begin by rescaling the problem on the unit disk. 
\begin{equation}\label{rescaling functional}
\min\limits_{u=g_\e\text{ on }\pa B_1} \int_{B_1(0)} \left( \f{\e}{2}|\na u|^2+\f{1}{\e}W(u) \right)\,dz=:\min\limits_{u=g_\e\text{ on }\pa B_1} J_\e(u),\quad z=(x,y)\in \BR^2,
\end{equation}
where $g_\e$ is a given smooth function connecting the phases ($a_1\ri a_2$, $a_2\ri a_3$, $a_3\ri a_1$) in $O(\e)$-intervals $I_i$, i=1,2,3, and otherwise equals to constants ($a_1,a_2,a_3$) in the complement $\pa B_1\setminus \cup_{i=1}^3 I_i$. A possible choice of $g_{\e}$ is given in \eqref{def of g_eps}.

First a general remark is in order. The main thrust of our work is in obtaining estimates for the minimizer(s) of \eqref{rescaling functional}, that is working at the $\e$--level. We make use of the limiting minimal partition problem and $\G$--convergence only in the final stages of the proof of Property (d) in Theorem \ref{main theorem}, specifically \eqref{convergence in main thm} and \eqref{convergence in main thm along a rays}.

The first step is the derivation of a tight upper/lower bound for the minimizer, 
\begin{equation}\label{intro:lower bound}
    3\s-C\e^{\f12}\leq \int_{B_1}\left( \f{\e}{2}|\na u_\e|^2+\f{1}{\e}W(u_\e) \right)\,dz\leq 3\s+C\e.
\end{equation}

The usefulness of such an estimate for extracting qualitative information for the minimizer as $\e\ri0$ was the major point in the joint paper \cite{AF} by G. Fusco and one of the authors. However the geometry of the examples treated in \cite{AF} is simple, and the transition along the interface involves only one of the variables, hence only part of the gradient in \eqref{intro:lower bound} was involved. In the present work the transition is 2D and so the whole gradient is present. Thus the proof of \eqref{intro:lower bound} utilizes several new ideas, and in particular it introduces the center $(x^*,y^*)$ of the junction. We remark that the particular power $\f12$ in the lower bound above does not have a particular significance in our treatment. 

The second step is the derivation of the estimate
\begin{equation}
\label{intro:y* est} |x^*|,\, |y^*|\leq C\e^{\f14},    
\end{equation}
which makes use of both the lower and the upper bound in \eqref{intro:lower bound}.

The third step is the localization of the \emph{diffuse interface}
\beq
\label{intro:def of diffuse interface} I_{\e,\g}:=\{z\in B_1: |u_\e(z)-a_i|>\g\text{ for all }i=1,2,3\}
\eeq
in an $O(\e^{\f14})$ neighborhood of the triod. The proof of this utilizes all the previous estimates and also the variational maximum principle in \cite{AF2}  (see also \cite[Theorem 4.1]{afs-book}).

The final tool utilized in the proof is an estimate on the width of the diffuse interface $I_{\e,\g}$. Let 
\beq
    \label{intro: Gi} \G_{\e,\g}^i:=\{z\in \overline{B}_1: |u_{\e}(z)-a_i|=\g\},\ \ i=1,2,3. 
\eeq
For an arbitrary $z_1\in \G^i_{\e,\g}$ set 
\beq
\label{intro: def r1} r_1:=\max\{r: B(z_1,r)\cap (\bigcup\limits_{j\neq i}\G_{\e,\g}^j)=\varnothing\}.
\eeq
Then we have the estimate
\beq\label{intro: est r1}
r_1\leq C(\g,W)\e. 
\eeq

This estimate provides a substitute for the lack of an $O(\e)$ localization of the diffuse interface from the triod, which may very well be an effect of the absence of symmetry. The derivation of \eqref{intro: est r1} is mainly based on the vector version of the Caffarelli-C\'{o}rdoba density estimate \cite{AF3} (see also \cite[Theorem 5.2]{afs-book}). Specifically, it does not utilize any of the previous estimates.

Next utilizing these estimates in the proof of the key Lemma \ref{New ver of lemma 6.2}, for any $k\in\mathbb{N}^+$ and $\g$ small, when $\e<\e(k,\g,W)$ we can obtain a set of points $\{Q_1^\e,Q_2^\e,...,Q_{2^k}^\e,R_1^\e,R_2^\e,...,R_{2^k}^\e\}$ in the ball $B_{\f12}(0)$ satisfying
\begin{align}
&\dist(Q_j^\e,\G^1_{\e,\g})\leq C_0\e,\ \dist(R_j^\e,\G^1_{\e,\g})\leq C_0\e,\quad \forall j=1,...,2^k.\\
\label{intro:QR}&
\vert u_\e(Q_j^\e)-a_2\vert\leq \g,\ \vert u_\e(R_j^\e)-a_3\vert\leq \g,\ \ \forall j=1,...,2^k.\\
\label{intro:distance btw Qi Ri}&
\dist(A,B)\geq 6C_0\e,\quad  \forall A,B\in\{Q_1^\e,...,Q_{2^k}^\e,R_1^\e,...,R_{2^k}^\e\}.
\end{align}
Here $C_0=C(\g,W)$ is the constant in \eqref{intro: est r1}. This argument has a more discrete nature and is accomplished via an elaborate induction argument. With Lemma \ref{New ver of lemma 6.2}, we can study the blow up map at the scale $\e$ and let $\e\ri 0$ to obtain a minimizing solution $u(\cdot)$ of \eqref{main equation}, depending on $\g$, which satisfies Properties (a), (b), (c) in Theorem \ref{main theorem}.

Finally we come to the derivation of Property (d). We begin with the minimizing solution $u:\BR^2\ri\BR^2$ satisfying Properties (a), (b), (c) in Theorem \ref{main theorem}.  Modica \cite{modica1979gamma} proved that entire minimizers $u:\BR^n\ri\BR$ of the scalar Allen--Cahn equation, with bistable $W$, converge along subsequences to a minimizing cone $u_0$,
\beqo
u(r_kx)\ri u_0(x)\ \text{ in }L^1_{loc}, \ r_k\ri+\infty.
\eeqo
His proof utilizes the monotonicity formula for minimal surfaces (see Giusti \cite[Theorem 9.3]{giusti1984minimal}). Modica's argument can be adjusted to the problem at hand, where minimal partitions are the relevant objects. A convenient set-up is that of flat chains introduced in Fleming \cite{fleming1966flat} and further developed by White \cite{whitenotes}. In particular minimal partitions can be identified with minimal flat chains of top dimension, which also satisfy the monotonicity formula. Consequently we obtain that $u_0$ in \eqref{convergence in main thm} is a planar minimizing cone, and therefore either a straight line or a triod. However, Properties (a), (b), (c)  together imply an energy lower bound that excludes the straight line, and so \eqref{convergence in main thm} is established. \eqref{convergence in main thm along a rays} follows from the cone property of $u_0$ via \cite[Proposition 5.6]{afs-book}. This concludes the sketch of the proof of Theorem \ref{main theorem}.

Concluding, we remark that all the arguments in the present paper utilize the variational character of $u$ only, with the exception of the pointwise exponential estimate in \eqref{distance to a_i} that makes use of linear elliptic theory.

A previous version of this work was uploaded in the arXiv in December, 2022. In a personal communication, Peter Sternberg has brought to our attention that in a joint work with Etienne Sandier, they have obtained comparable results.

The article is organized as follows. In Section \ref{sec:preliminary} we present some preliminary results from \cite{AF} and \cite{afs-book}. In Section \ref{sec:lower bound} we establish the lower bound in \eqref{intro:lower bound}. Then we show in Section \ref{sec:localization} the estimate \eqref{intro:y* est} and the localization of the diffuse interface within an $O(\e^{\f14})$ neighborhood of the triod. Estimate \eqref{intro: est r1} of the width of the diffuse interface is established in Section \ref{sec:width}. Finally in Section \ref{sec:proof of main theorem} we give the proof of Theorem \ref{main theorem}. In the Appendix, we give a proof of the upper bound in \eqref{upper bound}. A similar estimate was derived in Fusco \cite{fusco}. 

\section*{Acknowledgement} 

We would like to thank Arghir Zarnescu for his interest in this work, for the stimulating discussions and for his great hospitality. 

Z. Geng is supported by the Basque Government through the BERC 2022-2025 program and by the Spanish State Research Agency through BCAM Severo Ochoa excellence accreditation SEV-2017-0718 and through project PID2020-114189RB-I00 funded by Agencia Estatal de Investigaci\'{o}n (PID2020-114189RB-I00/AEI/10.13039/501100011033).

\section{Preliminaries} \label{sec:preliminary}

Throughout the paper we denote by $z=(x,y)$ a 2D point and by $B(z,r)$ the 2D ball centered at the point $z$ with radius $r$. In addition, we let $B_1$ denote the unit 2D ball centered at the origin. We recall the following basic results which are important in our analysis.
\begin{lemma}[Lemma 2.1 in \cite{AF}]\label{lemma: potential energy estimate}
The hypotheses on $W$ imply the existence of $\delta_W>0$, and constants $c_W,C_W>0$ such that
\beqo
\begin{split}
&|u-a_i|=\delta\\
\Rightarrow & \ \f12 c_W\delta^2\leq W(u)\leq \f12 C_W \delta^2,\quad \forall \delta<\delta_W,\ i=1,2,3.
\end{split}
\eeqo
Moreover if $\min\limits_{i=1,2,3} |u-a_i|\geq \delta$ for some $\delta<\delta_W$, then $W(u)\geq \f12 c_W\delta^2.$
\end{lemma}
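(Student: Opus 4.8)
The plan is to exploit the fact that each $a_i$ is a nondegenerate global minimum of $W$ with $W(a_i)=0$, which is exactly the content of (H1): namely $W\in C^2$ and the Hessian $W_{uu}(a_i)$ is uniformly positive definite with spectrum trapped in $[c_1,c_2]$. The estimate is a local quantitative statement, so I would work one minimum at a time and then take the worst of the finitely many constants.

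First I would fix $i\in\{1,2,3\}$ and Taylor expand $W$ around $a_i$. Since $W(a_i)=0$ and $\nabla W(a_i)=W_u(a_i)=0$ (because $a_i$ is an interior minimizer), the second-order Taylor formula with integral remainder gives
\[
W(u)=\tfrac12 (u-a_i)^T W_{uu}(a_i)(u-a_i) + R_i(u),
\]
where $R_i(u)=\int_0^1 (1-t)\,(u-a_i)^T\big(W_{uu}(a_i+t(u-a_i))-W_{uu}(a_i)\big)(u-a_i)\,dt$. By continuity of $W_{uu}$ at $a_i$, for every $\kappa>0$ there is $\rho_i>0$ with $\|W_{uu}(v)-W_{uu}(a_i)\|\le\kappa$ whenever $|v-a_i|\le\rho_i$, hence $|R_i(u)|\le \tfrac12\kappa|u-a_i|^2$ for $|u-a_i|\le\rho_i$. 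Combining this with the two-sided bound $c_1|u-a_i|^2\le (u-a_i)^TW_{uu}(a_i)(u-a_i)\le c_2|u-a_i|^2$ from (H1) yields
\[
\tfrac12(c_1-\kappa)|u-a_i|^2 \le W(u)\le \tfrac12(c_2+\kappa)|u-a_i|^2,\qquad |u-a_i|\le\rho_i.
\]
Choosing $\kappa=\tfrac12 c_1$ (say) and setting $\delta_W:=\min_i\rho_i$, $c_W:=\tfrac12 c_1$, $C_W:=c_2+\tfrac12 c_1$, we get the displayed two-sided inequality for all $\delta<\delta_W$, uniformly in $i$. This proves the first assertion; the specific equality case $|u-a_i|=\delta$ is just this inequality read at that radius.

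For the last assertion, suppose $\min_{i}|u-a_i|\ge\delta$ for some $\delta<\delta_W$. I would split into two cases. If $u$ lies in some ball $B(a_j,\delta_W)$, then necessarily $|u-a_j|\ge\delta$, and the lower bound just proved gives $W(u)\ge\tfrac12 c_W|u-a_j|^2\ge\tfrac12 c_W\delta^2$. If instead $u\notin\bigcup_j B(a_j,\delta_W)$, then $u$ stays at distance $\ge\delta_W>\delta$ from the zero set of $W$; since $W$ is continuous, nonnegative, vanishing only on $\{a_1,a_2,a_3\}$, and coercive in the sense that $W_u(u)\cdot u>0$ for $|u|>M$ (so $W$ is bounded below by a positive constant outside any neighborhood of its minima — on the compact annulus this is immediate, and for $|u|>M$ one uses the radial monotonicity to push the infimum down to $|u|=M$), one has $\inf\{W(u): \operatorname{dist}(u,\{a_1,a_2,a_3\})\ge\delta_W\}=:m_0>0$. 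Shrinking $c_W$ once more if necessary so that $\tfrac12 c_W\delta_W^2\le m_0$ (which forces $\tfrac12 c_W\delta^2\le m_0\le W(u)$ in this case too) closes the argument.

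The only mildly delicate point is the second case of the last assertion — controlling $W$ away from all three wells, where (H1) gives no direct quantitative bound — and there the work is just to convert the qualitative facts "$W>0$ off its zero set" and "$W$ grows radially past $|u|=M$" into a single positive lower bound on the complement of $\bigcup_j B(a_j,\delta_W)$; everything else is the routine $C^2$ Taylor estimate at a nondegenerate minimum. I expect no real obstacle, since this is precisely the sort of standard local lemma that underlies all Allen--Cahn energy estimates, and it is quoted here verbatim from \cite{AF}.
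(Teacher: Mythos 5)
Your proof is correct and follows the standard route one would expect for this lemma: Taylor expansion at the nondegenerate minima (using $W\in C^2$, $W(a_i)=0$, $W_u(a_i)=0$, and the two-sided Hessian bound from (H1)) for the local two-sided estimate, then compactness together with the radial coercivity $W_u(u)\cdot u>0$ for $|u|>M$ to obtain a positive lower bound for $W$ on the complement of the three balls. Note that the paper itself gives no proof of this statement; it is quoted verbatim from Lemma 2.1 of \cite{AF}, so there is nothing in the present manuscript to compare against, but your reconstruction is exactly the natural argument.

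One small point worth tightening: the phrase ``push the infimum down to $|u|=M$'' is not quite enough as stated, because (H1) only gives $W_u(u)\cdot u>0$ for $|u|>M$, and it could happen that $|a_i|=M$ for some $i$ (the hypothesis does not exclude this, since at $a_i$ one has $W_u(a_i)\cdot a_i=0$, which is consistent with $|a_i|=M$). In that case $\min_{\partial B(0,M)}W=0$ and the radial monotonicity alone does not yield a positive constant. The fix is trivial: push down to $\partial B(0,M')$ for any fixed $M'>M$, where $\min_{\partial B(0,M')}W>0$ because $M'>\max_i|a_i|$, and combine with the positive infimum of $W$ on the compact set $\{\,\operatorname{dist}(u,\{a_1,a_2,a_3\})\ge\delta_W,\ |u|\le M'\,\}$. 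With that adjustment the argument is complete. Your final step of shrinking $c_W$ so that $\tfrac12 c_W\delta_W^2\le m_0$ is also correct, since decreasing $c_W$ cannot invalidate the local two-sided bound already established.
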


\begin{lemma}[Lemma 2.3 in \cite{AF}]\label{lemma: 1D energy estimate}
Take $i\neq j \in \{1,2,3\}$, $\d <\d_W$ and $s_+>s_+$ be two real numbers. Let $v:(s_-,s_+)\ri \BR^2$ be a smooth map that minimizes the energy functional 
\beqo
J_{(s_-,s_+)}(v):=\int_{s_-}^{s_+} \left(\f12|\na v|^2+W(v)\right)\,dx 
\eeqo
subject to the boundary condition 
\beqo
|v(s_-)-a_i|=|v(s_+)-a_j|=\delta.
\eeqo
Then
\beqo
J_{(s_-,s_+)}(v)\geq \sigma-C_W\delta^2,
\eeqo
where $C_W$ is the constant in Lemma \ref{lemma: potential energy estimate}. 
\end{lemma}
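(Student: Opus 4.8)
The plan is to establish the lower bound for \emph{any} smooth competitor $v$ satisfying the two endpoint constraints (minimality of $v$ will in fact not be used), by gluing $v$ to short interpolation segments at either end so as to produce an admissible connection between $a_i$ and $a_j$, and then invoking the definition of $\sigma_{ij}=\sigma$ together with the quadratic bound on $W$ near its minima from Lemma \ref{lemma: potential energy estimate}.

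Concretely, I would set $\xi_-:=v(s_-)-a_i$ and $\xi_+:=v(s_+)-a_j$, so that $|\xi_\pm|=\delta<\delta_W$. Fix a length $\ell>0$ and let $\tilde v:\BR\ri\BR^2$ agree with $v$ on $[s_-,s_+]$, interpolate linearly from $a_i$ to $v(s_-)$ on $[s_--\ell,s_-]$ and from $v(s_+)$ to $a_j$ on $[s_+,s_++\ell]$, and equal the constants $a_i$, $a_j$ outside $[s_--\ell,s_++\ell]$. Then $\tilde v$ is continuous and piecewise smooth with $\tilde v(-\infty)=a_i$, $\tilde v(+\infty)=a_j$ and $J_{\BR}(\tilde v)<\infty$, so it is admissible in the variational problem defining $\sigma$, whence $\sigma\le J_{\BR}(\tilde v)=J_{(s_-,s_+)}(v)+\big(\text{energy of the two segments}\big)$. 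On each segment $|\tilde v'|\equiv\delta/\ell$, and every point stays within distance $\delta<\delta_W$ of the relevant minimum, so Lemma \ref{lemma: potential energy estimate} controls $W(\tilde v)$ there (by $\tfrac12 C_W\delta^2$, or more finely by $\tfrac12 C_W(\tau\delta/\ell)^2$ along the segment); integrating and summing, the two segments contribute at most $\delta^2/\ell+C_W\delta^2\ell/3$. Choosing $\ell$ suitably (e.g. $\ell=1$) and, if needed, enlarging the constant $C_W$ of Lemma \ref{lemma: potential energy estimate} — which only weakens that lemma and is therefore harmless — this is $\le C_W\delta^2$, and hence $J_{(s_-,s_+)}(v)\ge \sigma-C_W\delta^2$.

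This is essentially a cut-and-paste argument and I do not anticipate a genuine obstacle. The only points deserving care are that $\tilde v$ really lies in the competitor class for $\sigma_{ij}$ — continuity at the gluing points $s_\pm$ and the correct limits at $\pm\infty$, so that the inequality $J_{\BR}(\tilde v)\ge\sigma$ is legitimate — and that the cost of the two interpolation segments is governed entirely by the quadratic growth of $W$ near $a_i,a_j$, i.e. precisely by Lemma \ref{lemma: potential energy estimate}; the exact value of the resulting constant is immaterial for the applications.
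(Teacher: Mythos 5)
The paper does not reprove this lemma; it is quoted from \cite{AF}. Your gluing argument is the standard and correct route: extend $v$ to a finite-energy connection $\tilde v$ from $a_i$ to $a_j$ by attaching two short linear interpolation segments, appeal to $\sigma\leq J_\BR(\tilde v)$ from the definition of $\sigma_{ij}$ in (H2), and bound the segment cost via the quadratic upper bound on $W$ near its zeros from Lemma~\ref{lemma: potential energy estimate}. The arithmetic is right ($\delta^2/\ell+C_W\delta^2\ell/3$ for the two segments), and your remark that the minimality of $v$ is never used is correct and in fact the relevant point for how the lemma gets invoked later, where it is applied to restrictions of $u_\e$ along line segments that are not themselves $1$D minimizers. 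The only point worth flagging explicitly is the ``enlarge $C_W$'' step: with $\ell=1$ the segment cost is $\delta^2(1+C_W/3)$, which is $\leq C_W\delta^2$ only once $C_W\geq 3/2$; replacing $C_W$ by $\max(C_W,3/2)$ is harmless because it only weakens Lemma~\ref{lemma: potential energy estimate} and the rest of the paper uses only that \emph{some} such constant exists, but it does mean the stated conclusion carries the same symbol $C_W$ only after this normalization, a convention one should state once rather than leave implicit.
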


For most of the paper, we consider the variational problem 
\beq\label{epsilon energy}
\min\int_{B_1}\left( \f{\e}{2}|\na u|^2+\f1\e W(u)\right)\,dz,
\eeq
where $W(u)$ satisfies Hypotheses (H1) and (H2). We denote by $u_\e \in W^{1,2}(B_1,\BR^2)$ a global minimizer of the functional \eqref{epsilon energy} with respect to the boundary condition in polar coordinates 
$$
u_\e(1,\theta)=g_\e(\t)\text{ on }\pa B_1,
$$
where $g_\e:[0,2\pi)$ is given by
\begin{equation}\label{def of g_eps}
g_\e(\theta):=\begin{cases}
a_2+g_0(\f{\t-\f{\pi}{2}+c_0\e}{2c_0\e})(a_1-a_2), &  \t\in [\f{\pi}{2}-c_0\e, \f{\pi}{2}+c_0\e)\\
a_1,& \t\in[\f{\pi}{2}+c_0\e, \f76\pi-c_0\e),\\
a_1+g_0(\f{\t-\f{7\pi}{6}+c_0\e}{2c_0\e})(a_3-a_1), &  \t\in [\f76\pi-c_0\e, \f76\pi+c_0\e)\\
a_3,& \t\in[\f76\pi+c_0\e, \f{11}{6}\pi-c_0\e),\\
a_3+g_0(\f{\t-\f{11\pi}{6}+c_0\e}{2c_0\e})(a_2-a_3), &  \t\in [\f{11}{6}\pi-c_0\e, \f{11}{6}\pi+c_0\e),\\
a_2,& \t\in[\f{11}{6}\pi+c_0\e, 2\pi)\cap[0,\f{\pi}{2}-c_0\e),
\end{cases}
\end{equation}
where $g_0: [0,1]\ri [0,1]$ is a strictly increasing smooth function that satisfies $g_0(0)=0$, $g_0(1)=1$ and $|g_0'(x)|\leq 2$. From the definition there exists a positive constant $M$ such that $|g_\e|\leq M$. 

The energy $J_\e(u_\e)$ satisfies the following upper bound.
\begin{lemma}\label{lemma:upper bound}
There is a constant $C=C(W)$ such that  
\begin{equation}\label{upper bound}
\int_{B_1}\left\{  \f{\e}{2}|\na u_\e|^2+\f{1}{\e}W(u_\e) \right\}\,dz\leq 3\sigma +C\e.
\end{equation}
\end{lemma}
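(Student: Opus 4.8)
The plan is to construct an explicit competitor $\bar u_\e$ with $J_\e(\bar u_\e)\le 3\sigma+C\e$ and invoke minimality of $u_\e$. The competitor should look like the limiting triod away from a small neighborhood of the origin, transition across each of the three edges of the triod by a rescaled 1D connection $U_{ij}$, and be patched together near the center. Concretely, fix the triod $\partial\mathcal P$ centered at the origin with the three edges aligned so that the three jumps of $g_\e$ on $\partial B_1$ (located near $\theta=\tfrac\pi2$, $\tfrac{7\pi}{6}$, $\tfrac{11\pi}{6}$) sit at the outer endpoints of the edges. In the region $B_1\setminus B_{\e^{\alpha}}(0)$ (for a suitable $\alpha\in(0,1)$, e.g. $\alpha=\tfrac12$), away from the edges set $\bar u_\e\equiv a_i$ on the sector $D_i$, and in a tube of width $O(\e|\log\e|)$ around each edge let $\bar u_\e(z)=U_{ij}\!\bigl(\mathrm{dist}(z,\text{edge})/\e\bigr)$, using the signed distance so that the connection runs from $a_i$ to $a_j$. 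Near the triple point, inside $B_{\e^{\alpha}}(0)$, interpolate crudely between the three boundary values on $\partial B_{\e^\alpha}(0)$; the energy there is bounded by $C\bigl(\tfrac{\e}{2}\cdot(\text{Lipschitz const})^2+\tfrac1\e\sup W\bigr)\cdot|B_{\e^\alpha}|$, and since the natural Lipschitz constant of the interpolant is $O(\e^{-\alpha})$ this contributes $O(\e^{1-2\alpha}\cdot\e^{2\alpha})+O(\e^{2\alpha-1})=O(\e)+O(\e^{2\alpha-1})$, which is $O(\e)$ provided $\alpha\ge\tfrac12$. One also needs to reconcile the boundary trace: since $g_\e$ already transitions in $O(\e)$-intervals, the competitor can be taken to match $g_\e$ exactly on $\partial B_1$ with only an $O(\e)$ correction in a thin collar.

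The key steps, in order, are: (i) set up the geometry — fix the triod, its three edges $\ell_1,\ell_2,\ell_3$ from the center to $\partial B_1$, their signed distance functions, and the tubular neighborhoods $T_k$ of width $\rho\e$ with $\rho=\rho(\e)\sim|\log\e|$; (ii) on each tube $T_k\setminus B_{\e^{1/2}}(0)$, define $\bar u_\e=U_{ij}(d_k/\e)$ and compute its energy: after the change of variables $t=d_k/\e$ the two terms combine into the action integrand $\tfrac12|U_{ij}'|^2+W(U_{ij})$, integrated over $t\in(-\rho,\rho)$ and over arclength $\le 1$ along the edge, giving at most $\sigma+C\e$ per edge once we account for (a) the length of the edge being $\le 1$ rather than the rescaled real line — this only removes energy — and (b) the exponential decay $|U_{ij}(t)-a_{i,j}|\le Ce^{-ct}$ from (H1), which controls the error from truncating at $t=\pm\rho$ by $Ce^{-c\rho}=O(\e)$ for $\rho\sim|\log\e|$; (iii) bound the energy on the core ball $B_{\e^{1/2}}(0)$ by $O(\e)$ as above; (iv) bound the energy of the collar near $\partial B_1$ where $\bar u_\e$ is adjusted to meet $g_\e$, again $O(\e)$ since $g_\e$ varies over $O(\e)$-scales and $W(g_\e)$ is supported there with $|g_\e|\le M$; (v) bound the energy in the transition regions where the tubes meet the core ball and where two tubes are close (which happens only inside $B_{C\e^{1/2}}(0)$, already covered by (iii)); (vi) sum: $J_\e(\bar u_\e)\le 3(\sigma+C\e)+O(\e)=3\sigma+C\e$, and conclude by $J_\e(u_\e)\le J_\e(\bar u_\e)$.

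The main obstacle I anticipate is the bookkeeping near the triple point and, relatedly, making the tubes $T_1,T_2,T_3$ genuinely disjoint outside the core ball while keeping $\bar u_\e$ globally $W^{1,2}$ and equal to the correct $a_i$ on each sector at the tube boundaries. The three 1D profiles $U_{12},U_{23},U_{31}$ are a priori unrelated maps into $\BR^2$, so there is no canonical way to glue them; the fix is that away from the triple point the tubes are separated by a fixed fraction of arclength, the profiles have already relaxed to within $O(\e)$ of their endpoint minima at the tube walls (by exponential decay, using tube half-width $\rho\e$ with $\rho\to\infty$), and one simply cuts off to the exact constant $a_i$ across an $O(\e)$-thick layer at each wall — the cut-off costs $\tfrac\e2|\nabla(\text{cutoff})|^2\cdot(\text{area})=O(\e^{-2})\cdot O(\e)\cdot O(\e^2)=O(\e)$ plus a potential term $O(\e^{-1})\cdot O(\e^2)\cdot\e^{-2}\cdot$… which, using $W(u)\le C|u-a_i|^2\le Ce^{-2c\rho}$ on the layer, is again $O(\e)$. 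The only place this breaks down is inside $B_{C\e^{1/2}}(0)$, and there we do not attempt any fine construction: the crude interpolation bound (iii) suffices precisely because the area $\pi\e$ absorbs the $\e^{-1}$ in the potential term and the $\e\cdot\e^{-1}=O(1)$ in the gradient term is multiplied by the small area. A secondary technical point is choosing the smooth interpolant on $\partial B_{\e^{1/2}}(0)$ that connects the three arcs carrying values near $a_1,a_2,a_3$: any fixed-modulus Lipschitz extension into $B_{\e^{1/2}}(0)$ with Lipschitz constant $O(\e^{-1/2})$ works, e.g. radial interpolation to the value $\tfrac13(a_1+a_2+a_3)$ at the origin. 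The upper bound then follows without using any of the deeper estimates of the paper. \qed
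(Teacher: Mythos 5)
Your overall strategy is the same as the paper's — build a competitor out of rescaled 1D profiles $U_{ij}(\mathrm{dist}/\e)$ running perpendicular to the three legs of a fixed triod, fill in with constants/interpolants, and invoke minimality — but there is a genuine arithmetic error in the core-ball estimate that makes the stated bound fail.

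You take the core ball radius to be $\e^{\alpha}$ with $\alpha=\tfrac12$ and claim the energy there is $O(\e)$. With a Lipschitz interpolant on $B_{\e^{\alpha}}$, the potential term alone costs
\[
\frac1\e\,\sup W\cdot |B_{\e^\alpha}| \sim \frac1\e\cdot \e^{2\alpha}=\e^{2\alpha-1},
\]
which at $\alpha=\tfrac12$ is $O(1)$, not $O(\e)$; your own later remark ``the area $\pi\e$ absorbs the $\e^{-1}$ in the potential term'' gives $\pi\e\cdot\e^{-1}=O(1)$, confirming the slip. The gradient term also fails at $\alpha=\tfrac12$: the boundary trace on $\partial B_{\e^\alpha}$ transitions over $O(\e)$-arclength, so it has Lipschitz constant $O(\e^{-1})$, not $O(\e^{-\alpha})$; the corresponding wedges where $|\nabla\bar u_\e|\sim\e^{-1}$ have angular width $\e^{1-\alpha}$ and hence area $\sim\e^{1-\alpha}\cdot\e^{2\alpha}=\e^{1+\alpha}$, so they contribute $\tfrac\e2\cdot\e^{-2}\cdot\e^{1+\alpha}=O(\e^{\alpha})$, again $O(\e^{1/2})$ at $\alpha=\tfrac12$. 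To reach $O(\e)$ in both terms you need $\alpha\ge1$, i.e.\ a core ball of radius $O(\e)$. This is exactly what the paper does: it takes $r_1=c_0\e$, uses the harmonic extension of the $\e^{-1}$-Lipschitz trace on $\partial B_{c_0\e}$, and the core contributes $\pi(c_0\e)^2\bigl(\tfrac\e2(C/\e)^2+\tfrac1\e C\bigr)=O(\e)$ on the nose. Once you replace $\e^{1/2}$ by $c_0\e$ the rest of your construction (truncated tubes of half-width $\rho\e$ with $\rho\sim|\log\e|$, exponential decay to cut off to constants at the tube walls, $O(\e)$ collar correction at $\partial B_1$) goes through and coincides, modulo presentation, with the paper's argument. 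One additional remark: the paper does not bother with log-width tubes and cutoffs — it takes whole sectors and linearly interpolates between two exponentially-close-to-$a_i$ profiles across the ``far'' sub-sector, which avoids the cutoff bookkeeping entirely and may be slightly cleaner; your tube version works too, but only after fixing the core radius.
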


A similar upper bound is derived by Fusco in \cite[estimate (3.18)]{fusco}. For completeness we present the proof of Lemma \ref{lemma:upper bound} in Appendix \ref{app:upper bound}.

In addition, we can control $|u_\e|$ and $|\na u_\e|$ thanks to the smoothness assumption of $W$ and standard elliptic regularity theory. 

\begin{lemma}
Let $u_\e$ minimize the functional \eqref{epsilon energy} with the boundary condition $u_\e=g_\e$ on $\pa B_1$. There is a constant $M$ independent of $\e$, such that  
\begin{equation}\label{gradient bound}
|u_\e(z)|\leq M,\quad |\na u_\e(z)|\leq\f{M}{\e}, \quad \forall z\in B_1. 
\end{equation}
\end{lemma}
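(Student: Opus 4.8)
The plan is to establish the two bounds in \eqref{gradient bound} separately, both as consequences of the fact that $u_\e$ minimizes \eqref{epsilon energy} together with standard elliptic theory applied to the Euler--Lagrange equation \eqref{main equation} (in its $\e$-scaled form $\e^2 \Delta u_\e = W_u(u_\e)$).

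\emph{First I would bound $|u_\e|$.} The key point is that $|u_\e|\le M$ for the same $M$ appearing in (H1) and in the bound $|g_\e|\le M$ for the boundary data. The mechanism is the variational maximum principle: if the set $A:=\{z\in B_1:|u_\e(z)|>M\}$ were nonempty, one truncates $u_\e$ by projecting radially onto the ball $\overline{B(0,M)}\subset\BR^2$, i.e. replaces $u_\e$ by $\tilde u_\e = u_\e$ on $B_1\setminus A$ and $\tilde u_\e = M\,u_\e/|u_\e|$ on $A$. Since $|g_\e|\le M$ this competitor still satisfies the boundary condition and lies in $W^{1,2}$. Because $W_u(u)\cdot u>0$ for $|u|>M$, the function $W$ is nondecreasing along the radial direction outside $B(0,M)$, so $W(\tilde u_\e)\le W(u_\e)$ pointwise; and the radial projection onto a convex ball is $1$-Lipschitz, so $|\na \tilde u_\e|\le |\na u_\e|$ a.e. Hence $J_\e(\tilde u_\e)\le J_\e(u_\e)$ with strict inequality unless $|A|=0$, and minimality forces $|u_\e|\le M$ a.e., hence everywhere by continuity (elliptic regularity). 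This is essentially the argument behind \cite[Theorem 4.1]{afs-book}, which is already invoked elsewhere in the paper.

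\emph{Then I would bound the gradient.} With $|u_\e|\le M$ now known, $W$ and $W_u$ are bounded on the relevant range (say $|W_u|\le K$ on $\overline{B(0,M)}$ by (H1), $W\in C^2$). Rescale: set $v(w):=u_\e(\e w)$ on the ball $B(0,1/\e)$; then $v$ solves $\Delta v = W_u(v)$ with $|v|\le M$ and $|W_u(v)|\le K$. Interior elliptic estimates ($L^\infty$ gradient bounds from $W^{2,p}$ theory and Sobolev embedding, or Schauder once one notes the right-hand side is Hölder after a bootstrap) give $|\na v(w)|\le C(M,K)$ for $w$ at distance $\ge 1$ from $\partial B(0,1/\e)$; near $\partial B_1$ one uses the smoothness of $g_\e$ together with $|g_\e'|\lesssim 1/\e$, i.e. $g_\e$ rescales to a function with bounded derivatives on $\partial B(0,1/\e)$, and applies boundary elliptic estimates. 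Scaling back, $\na u_\e(z) = \e^{-1}\na v(z/\e)$, which yields $|\na u_\e|\le M/\e$ on $B_1$ after possibly enlarging $M$.

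\emph{The main obstacle} is the uniformity of the gradient bound up to the boundary $\partial B_1$: the boundary data $g_\e$ has derivatives of size $O(1/\e)$ concentrated in the $O(\e)$-transition intervals $I_i$, so the naive interior estimate does not reach the boundary. The resolution is precisely that after the $\e$-rescaling the transition intervals become $O(1)$ and $g_\e(\e\,\cdot)$ has \emph{bounded} $C^2$ norm on $\partial B(0,1/\e)$ uniformly in $\e$ (this is built into the definition \eqref{def of g_eps} via $|g_0'|\le 2$ and $g_0$ smooth), so uniform boundary Schauder/$W^{2,p}$ estimates apply with constants independent of $\e$. One then only needs to check that the constant $M$ from (H1)/(H2)/the boundary data can be increased once and for all to absorb the purely dimensional elliptic constants; since nothing depends on $\e$, this is harmless. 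I would present the maximum-principle step carefully (citing \cite[Theorem 4.1]{afs-book}) and treat the elliptic regularity step as standard, remarking only on the rescaling that makes the boundary estimate uniform.
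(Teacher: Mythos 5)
Your proof is correct and follows exactly the route the paper indicates but does not write out (the authors simply say the bounds follow from the smoothness of $W$ and standard elliptic regularity theory, and omit the proof). The $L^\infty$ bound via radial truncation onto $\overline{B(0,M)}$ exploiting $W_u(u)\cdot u>0$ for $|u|>M$, followed by the $\e$-rescaled interior and boundary elliptic estimates together with the observation that $g_\e(\e\,\cdot)$ has $\e$-uniform $C^2$ norm, is the standard argument the authors have in mind, and your handling of the boundary layer is the right point to flag.
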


We omit the proof.

\section{Lower bound for $J_\e(u_\e)$} \label{sec:lower bound}

\begin{proposition}\label{lower bound epsilon 1/3}(weak lower bound)
There exist constants $C_1$ and $\e_0$, such that for any $\e\leq \e_0$, it holds 
\beq\label{lower bound e 1/3}
\int_{B_1} \left(\f{\e}{2}|\na u_\e|^2+\f{1}{\e}W(u_\e)\right)\,dz\geq 3\sigma-C_1\e^{\f13}.
\eeq 

\end{proposition}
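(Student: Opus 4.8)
The plan is to localize the boundary data on $\partial B_1$ and to use a slicing argument along circles centered at the origin, reducing the 2D lower bound to a sum of 1D estimates of the type in Lemma \ref{lemma: 1D energy estimate}. First, I would fix a small parameter $\delta < \delta_W$ (eventually $\delta = \delta(\e)$, say $\delta \sim \e^{1/6}$, to be optimized) and for each $i=1,2,3$ consider the sets $\G^i_{\e,\delta} = \{z \in \overline{B}_1 : |u_\e(z) - a_i| = \delta\}$ together with the phase regions $A^i = \{|u_\e - a_i| < \delta\}$ and the transition region $T = B_1 \setminus \bigcup_i A^i$, on which $W(u_\e) \geq \tfrac12 c_W \delta^2$ by Lemma \ref{lemma: potential energy estimate}. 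The boundary function $g_\e$ visits all three phases on arcs separated by $O(1)$ gaps on $\partial B_1$, so for most radii $r$ close to $1$, the circle $\partial B_r$ must contain points in all three phase regions, forcing at least three transitions on that circle.

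Next, I would write the energy in polar coordinates,
\[
J_\e(u_\e) = \int_0^1 \int_0^{2\pi} \left( \f{\e}{2}\left( |\pa_r u_\e|^2 + \f{1}{r^2}|\pa_\t u_\e|^2 \right) + \f1\e W(u_\e) \right) r\, d\t\, dr,
\]
and discard the $|\pa_r u_\e|^2$ term, keeping only the tangential part. For each fixed $r$ in a good set of radii $G \subset (1-\tau, 1)$ with $\tau$ a small fixed number, the restriction $u_\e(r,\cdot)$ is a competitor (not necessarily a minimizer, but we only need a lower bound for the \emph{minimal} 1D action between prescribed phases) whose action picks up a contribution of at least $3\sigma - C\delta^2$ from the three forced transitions, by applying Lemma \ref{lemma: 1D energy estimate} on each of the three transition arcs after rescaling $\eta = \t/\e$ — here I use that on each arc the map goes from within $\delta$ of one $a_i$ to within $\delta$ of another $a_j$, and that $\sigma_{ij} \equiv \sigma$ by (H2). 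Because $r \geq 1 - \tau$, the factor $r$ in the measure and the $1/r^2$ in the tangential term cost only a multiplicative $(1 + O(\tau))$. Integrating over $r \in G$ and using $|G| \geq c\tau$ for a suitable choice (ensuring the forced-transition property holds for a definite fraction of radii, which follows from the $O(1)$ separation of the arcs of $g_\e$ and a continuity/degree argument, together with the gradient bound $|\na u_\e| \leq M/\e$ to control how far the phase can drift as $r$ decreases from $1$) yields
\[
J_\e(u_\e) \geq (1 - C\tau)\left(3\sigma - C\delta^2\right) \cdot \f{|G|}{\text{(normalization)}}.
\]
The subtle point is the normalization: a single circle contributes $\geq \e \cdot \e^{-1}(3\sigma - C\delta^2)/(\text{something})$; one must be careful that slicing over $r$ does not lose the factor $\e^{-1}$ from the potential term. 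The correct bookkeeping is that for each good $r$, $\int_0^{2\pi}(\tfrac{\e}{2r}|\pa_\t u_\e|^2 + \tfrac{r}{\e}W) d\t \geq (1-C\tau)(3\sigma - C\delta^2)$ directly (the $\e$'s cancel inside the 1D action after rescaling $\t = \e\eta$), and then integrating $dr$ over $G$ gives an extra factor $|G| \sim \tau$, which is a loss. To avoid the loss one instead argues that the \emph{infimum over good radii} already gives $3\sigma - C\delta^2$, i.e.\ one does not integrate but rather uses that the minimal energy on an annular shell is bounded below by the minimal energy of a single optimal circle; more precisely one compares with the "best" radius, or runs the 1D estimate on the circle $r$ that minimizes the shell-integrand and notes this minimum is $\leq |G|^{-1} \int_G (\cdots) dr$.

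The main obstacle I expect is precisely this loss in the slicing: a naive tangential-only slicing over an annulus of width $\tau$ gives $J_\e \geq c\tau(3\sigma - C\delta^2)$, which is far from $3\sigma$. To recover the sharp constant $3\sigma$ one must also use the radial derivative — the point is that if the energy on a single circle near $\partial B_1$ already exceeds $3\sigma - C\delta^2$, one is done, but if it does not, then $u_\e$ must be very close to a radial-transition configuration and one uses $|\pa_r u_\e|^2$ to propagate the lower bound inward, or alternatively uses the coarea/BV structure: the sets $\G^i_{\e,\delta}$ separate the disk, each $\G^i$ near $\partial B_1$ must have length $\gtrsim 1$, and the energy concentrated near each $\G^i$ is $\gtrsim \sigma - C\delta^2$ by the vector-valued 1D slicing transverse to $\G^i$ (a BV/coarea argument à la Modica–Mortola–Baldo–Sternberg, applied at the $\e$-level). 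Choosing $\delta \sim \e^{1/6}$ so that $C\delta^2 \sim \e^{1/3}$ dominates the geometric errors, and choosing the width parameter appropriately, yields the stated $3\sigma - C_1\e^{1/3}$. I would carry out the BV/coarea version as the cleaner route: decompose $B_1$ into the phase regions, show the reduced-boundary length of each phase region is at least a definite constant by tracking the three arcs of $g_\e$, and bound from below the energy in the transition layer around each reduced boundary by $\sigma - C\delta^2$ using Lemma \ref{lemma: 1D energy estimate} fiberwise — summing the three contributions gives $3\sigma - 3C\delta^2 - (\text{overlap errors})$, and the overlap/endpoint errors are absorbed into $C_1\e^{1/3}$.
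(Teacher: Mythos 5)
Your initial circular slicing runs into exactly the loss you identify, and none of the proposed repairs actually closes it. Taking the best radius in $G$ gives $\min_{r\in G}(\cdots)\leq |G|^{-1}\int_G(\cdots)\,dr$, which is equivalent to $\int_G(\cdots)\,dr\geq |G|\cdot\min_{r\in G}(\cdots)$ — you are back to the $|G|\sim\tau$ factor, not rid of it. The deeper problem is that tangential slicing over circles near $\partial B_1$ cannot see the full length $3$ of the triod: a circle of radius $r$ crossing all three legs picks up $\geq 3(\sigma-C\delta^2)$, but this must then be multiplied by $dr$ over the annulus, and the annulus has measure $\tau\ll 1$. Discarding $|\pa_r u_\e|^2$ throws away the contribution of the three radial legs, which is precisely where most of the $3\sigma$ lives. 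Your alternative BV/coarea sketch shifts the burden to showing that the interface length (perimeter of the partition $\{|u_\e-a_i|<\delta\}$) is $\geq 3-O(\e^{1/3})$; that inequality is itself the hard, Steiner-type geometric step, and you assert rather than prove it ("each $\G^i$ near $\partial B_1$ must have length $\gtrsim 1$" gives only an $O(1)$ bound, not the sharp $3$).

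The paper's mechanism resolves both difficulties simultaneously and is worth internalizing. It slices in Cartesian coordinates, not polar, and introduces the level $y^*$ below which phase $3$ must appear on horizontal slices. Above $y^*$ (region $\Om_1$), horizontal slicing yields $\s(1-y^*)+O(\e)$. Below $y^*$ (region $\Om_2$), the key trick is to split $W=\sin^2\t\,W+\cos^2\t\,W$, pair the $\sin^2\t$ part with the vertical gradient and the $\cos^2\t$ part with the horizontal gradient, apply Lemma~\ref{lemma: 1D energy estimate} in each direction (vertical slices see $a_3\ri a_1$ or $a_3\ri a_2$, horizontal slices see $a_1\ri a_3\ri a_2$), and optimize over $\t$ to get $\sqrt{\cl(K)^2+4\cl(M)^2}\,(\s-C\e^{1/3})$ with $\cl(K)\approx\sqrt3$ and $\cl(M)\approx y^*+\tf12$. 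Summing the two regions gives $\bigl(1-y^*+\sqrt{3+4(y^*+\tf12)^2}\bigr)\s-C\e^{1/3}$, and the elementary calculus fact that $x\mapsto 1-x+\sqrt{3+4(x+\tf12)^2}$ has minimum value $3$ at $x=0$ delivers the sharp constant with the $\e^{1/3}$ error. This anisotropic split of $W$, together with the free parameter $y^*$ and the resulting one-variable minimization, is the missing idea in your proposal: it is what lets a Cartesian slicing argument capture all three legs of the triod without any $\tau$-loss and without invoking (or re-proving) a quantitative minimal-partition lower bound.
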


\begin{proof}
For the sake of convenience we write $u_\e=u$ throughout the proof. Firstly we set the family of horizontal line segments $\g_y$ for $y\in [-\f12,1]$ as
\beqo
\g_y:=\{(x,y):\; x\in\mathbb{R}\}\cap B_1.
\eeqo

%\begin{equation*}
%\g_y:=\begin{cases}
%\{(x_1,x_2): \;x\in \BR^n,\; x_2=y\}\cap B_1, & y\in [-\f12,1];\\
%\{(x_1,\f{2y+1}{\sqrt{3}}x_1+y):\; x_1\in[-\f{\sqrt{3}}{2},0]\} \cap \{(x_1,-\f{2y+1}{\sqrt{3}}x_1+y):\; x_1\in[0,\f{\sqrt{3}}{2}]\},  & y\in [-1,-\f12].
%\end{cases}
%\end{equation*}

Then we define functions $\lam_1(y)$, $\lam_2(y)$, $\lam_3(y)$ for $y\in [-\f12,1]$,
\beqo
\lam_i(y):=\mathcal{L}^1(\g_y\cap \{|u(x,y)-a_i|<\e^{\f16}\}),\quad i\in \{1,2,3\}.
\eeqo
Here $\mathcal{L}^1$ denotes the 1-dimensional Lebesgue measure. Then by the boundary condition we know for any $y\in [-\f12+c_0\e, 1-c_0\e)$, it holds that $\lam_1(y)>0$ and $\lam_2(y)>0$. 

Let $y^*$ be the constant defined by
\beqo
y^*:= \min\{y\in [-\f12+c_0\e,1]: \; \lam_1(y)+\lam_2(y)\geq \mathcal{L}^1(\g_y)-\e^{\f13}\}.
\eeqo

%Next we discuss in two cases with respect to the value of %$y^*$. 

%\begin{case}[$y^*<\sqrt{3}-2$] 
Denote the subsets
\beqo
\begin{split}
&\Om_1:= B_1\cap \{(x,y): y\geq y^*\},\\
&\Om_2:=B_1\cap  \{(x,y): y< y^*\}.
\end{split} 
\eeqo
%\end{case}

We will calculate the energy in $\Om_1$ and $\Om_2$ respectively. In $\Om_1$, if $y^*\geq 1-c_0\e$, we simply estimate $\int_{\Om_1}\e|\na u|^2+\f{1}{\e}W(u)\,dx\,dy\geq 0$. Otherwise for any $y^*<y<1-c_0\e$, \eqref{def of g_eps} implies that 
\beqo
u(-\sqrt{1-y^2},y)=a_1,\quad u(\sqrt{1-y^2},y)=a_2.
\eeqo
Integrating the energy density on $\g_y$ gives
\beqo
\int_{-\sqrt{1-y^2}}^{\sqrt{1-y^2}} \left(\f\e2|\pa_{x} u|^2+\f{1}{\e}W(u)\right) \ dx\geq \sigma.
\eeqo
Therefore we obtain
\beq\label{energy on Om_1} 
\begin{split}
&\int_{\Om_1}\left(\f\e2|\na u|^2+\f{1}{\e}W(u)\right)\ dx\,dy\\
\geq &\int_{y^*}^{1-c_0\e}\int_{-\sqrt{1-y^2}}^{\sqrt{1-y^2}}\left(\f\e2|\pa_{x} u|^2+\f{1}{\e}W(u)\right)\ dx\,dy\geq \max\{\sigma(1-c_0\e-y^*), 0\}.
\end{split}
\eeq

On domain $\Om_2$, we claim that there exists a constant $C$ such that 
\begin{equation*}
    \mathcal{L}^1(\{y: -\f12+c_0\e<y<y^*,\; \lam_3(y)=0\})<C\e^{\f13}.
\end{equation*}

To prove this claim, first we note that when $y^*<-\f12+c_0\e+C\e^{\f13}$ (for some constant $C$ to be chosen later) the statement is trivial. Therefore we only consider the situation $y^*\geq -\f12+c_0\e+C\e^{\f13}$. Set 
\beqo
S:=\{y: -\f12+c_0\e<y<y^*,\; \lam_3(y)=0\}.
\eeqo
For any $y\in S$, definitions of $y^*$ and $S$ imply that $\lam_1(y)+\lam_2(y)+\lam_3(y)<\mathcal{L}^1(\g_y)-\e^{\f13}$, i.e.
\begin{equation*}
    \cl(\{x \in [-\sqrt{1-y^2},\sqrt{1-y^2}]: |u(x,y)-a_i|>\e^{\f16},\ \forall i\})>\e^{\f13}.
\end{equation*}
By our assumption on the potential function $W$, 
\beqo
W(u(x,y))\geq \f12c_W\e^{\f13}, \quad \text{when } |u(x,y)-a_i|>\e^{\f16},\ \forall i. 
\eeqo
From the energy upper bound \eqref{upper bound} we get
\beqo
\begin{split}
    &4\sigma\geq \f{1}{\e}\int_S\int_{\g_y} W(u)\ dxdy\geq  \f{c_W}{2\e}\cl(S)\e^{\f13}\e^{\f13}\\
    &\Rightarrow \cl(S)\leq C\e^{\f13}\quad \text{for some constant }C\text{ depending on }W. 
\end{split}
\eeqo

Now we calculate the energy in $\Om_2$. If $-\f12\leq y^*<-\f12+c_0\e+C\e^{\f13}$, then we define the set
\beqo
K_0:= \{ x\in [-\f{\sqrt{3}}{2}+c_0\e, \f{\sqrt{3}}{2}-c_0\e]: |u(x,y^*)-a_i|<\e^{\f16},\ i=1 \text{ or }2\}.
\eeqo
According to the definition of $y^*$, $\cl(K_0)\geq \sqrt{3}-2c_0\e-\e^{\f13}$. We have
\beq\label{energy on Om_2 first case}
\begin{split}
    &\int_{\Om_2} \left(\f\e2|\na u|^2+\f{1}{\e}W(u)\right)\ dx\,dy\\
    \geq & \int_{K_0}\int_{-\sqrt{1-x^2}}^{y^*} \left(\f\e2|\pa_{y} u|^2+\f{1}{\e}W(u)\right) \ dy \,dx\geq (\sqrt{3}-2c_0\e-\e^{\f13})(\sigma-C_W\e^{\f13}),
 \end{split}
\eeq
where $C_W$ is a constant only depending on the potential $W$ and the last estimate follows from Lemma \ref{lemma: 1D energy estimate}.  \eqref{energy on Om_1} and \eqref{energy on Om_2 first case} imply that when  $-\f12\leq y^*<-\f12+c_0\e+C\e^{\f13}$,
\begin{equation}\label{energy estimate first case}
\int_{B_1} \left(\f\e2|\na u|^2+\f{1}{\e}W(u)\right)\ dx\,dy\geq (\f32+\sqrt{3}-3c_0\e-(C+1)\e^{\f13})(\s-C_W\e^{\f13}),
\end{equation}
which satisfies \eqref{lower bound e 1/3} when $\e$ is sufficiently small. 

Now it suffices to consider the case $y^*\geq -\f12+c_0\e+C\e^{\f13}$. For any $x\in [-\f{\sqrt{3}}{2}+c_0\e, \f{\sqrt{3}}{2}-c_0\e]$, we set
\beqo
\zeta(x):=\min\{y^*, \sqrt{1-x^2}\}.
\eeqo
We also introduce the sets $K,\ M$. 
\begin{align*}
&K:= \{ x\in [-\f{\sqrt{3}}{2}+c_0\e, \f{\sqrt{3}}{2}-c_0\e]: \ |u(x,\zeta(x))-a_i|<\e^{\f16},\ i=1\text{ or }2\},\\
& M:=\{y\in [-\f12+c_0\e, y^*]:\ \lam_3(y)>0 \}.%\\
%&\kappa:= \f{2\cl(M)}{\cl(K)}.
\end{align*}

Moreover, let $\theta\in(0,\f{\pi}{2})$ be a parameter that will be determined later. In the next step we will split the potential $W$ into two parts $W=(\sin^2\theta) W+(\cos^2\theta) W$ and compute the energy in the vertical direction and the horizontal direction respectively.  

For any $x\in K$,
\begin{equation*}
    \ |u(x,\zeta(x))-a_i|<\e^{\f16}, \text{ for }i=1 \text{ or 2},\ u(x,-\sqrt{1-x^2})=a_3.
\end{equation*}
We estimate the energy in the vertical direction,
\begin{equation}\label{Om2 vertical}
\begin{split}
    &\int_{-\sqrt{1-x^2}}^{\zeta(x)} \left(\f\e2|\pa_{y}u|^2+\f{\sin^2\theta}{\e}W(u)\right)\ dy\\
    =& \sin\theta \int_{-\sqrt{1-x^2}}^{\zeta(x)} \left(\f\e{2\sin\theta}|\pa_{y}u|^2+\f{\sin\theta}{\e}W(u)\right)\ dy\\
    \geq & \sin\theta(\sigma-C_W\e^{\f13}).
    \end{split}
\end{equation}

\begin{figure}[htt]
\begin{tikzpicture}[thick]
\draw [black!20!green,domain=0:85] plot ({3.5*cos(\x)}, {3.5*sin(\x)});
\draw [black!20!green,domain=335:360] plot ({3.5*cos(\x)}, {3.5*sin(\x)});
\draw [dotted,domain=85:95] plot ({3.5*cos(\x)}, {3.5*sin(\x)});
\draw [red,domain=95:205] plot ({3.5*cos(\x)}, {3.5*sin(\x)});
\draw [dotted,domain=205:215] plot ({3.5*cos(\x)}, {3.5*sin(\x)});
\draw [blue,domain=215:325] plot ({3.5*cos(\x)}, {3.5*sin(\x)}) node at (0,-2.1) [color=black] {\Large{$\Om_2$}};
\draw [dotted,domain=325:335] plot ({3.5*cos(\x)}, {3.5*sin(\x)});
\draw (-3.46,0.3)--(3.46,0.3) node[above left] {$y=y^*$};
\draw [dashed](-3.42,-0.7)--(3.42,-0.7) node[above left] {$y\in M$};
\filldraw(-0.7,-0.7) circle (2pt) node at (-0.3,-1.05){$z=(x,y),\ |u(z)-a_3|<\e^{\f16}$}; 
\path ({-3.5*cos(5)},{3.5*sin(5)}) coordinate (A);
\path ({3.5*cos(5)},{3.5*sin(5)}) coordinate (B);
\fill[pattern={Lines[angle=-45,distance=4pt]}, pattern color=gray!70] (A)--(B) arc[start angle=5, end angle=175, radius=3.5] node at (-0.2,1.6) {\Large{$\Om_1$}};
\end{tikzpicture}
\caption{red curve: $a_1$, green curve: $a_2$, blue curve: $a_3$, shaded region: $\Om_1$, non-shaded region: $\Om_2$.}
\label{pic1}
\end{figure}

Similarly, for any $y\in M$, it holds that
 \beqo
 u(-\sqrt{1-y^2},y)=a_1,\ u(\sqrt{1-y^2},y)=a_2,\ \exists (x_0,y)\in \g_y \text{ s.t. }|u(x_0,y)-a_3|<\e^{\f16}.
 \eeqo
 \begin{equation}\label{Om2 horizontal}
     \begin{split}
    &\int_{\g_y}  \left(  \f\e2|\pa_{x}u|^2+\f{\cos^2\theta}{\e}W(u) \right)\ dx\\
    =& \cos\theta \left\{ \int_{-\sqrt{1-y^2}}^{x_0} +\int_{x_0}^{\sqrt{1-y^2}}\right\} \left(\f{\e}{2\cos\theta}|\pa_{x}u|^2+ \f{\cos\theta}{\e} W(u)\right)\ dx\\
    \geq & 2\cos\theta(\s-C_W\e^{\f13}).
     \end{split}
 \end{equation}
 
Using \eqref{Om2 vertical} and \eqref{Om2 horizontal} we obtain
\begin{equation*}
             \int_{\Om_2} \left( \f\e2|\na u|^2+\f{1}{\e}W(u) \right)\,dx\,dy
        \geq  \left(\sin\theta\cl(K)+2\cos\theta\cl(M)\right)(\s-C_W\e^{\f13}).
\end{equation*}

Since the estimate above holds for any $\theta\in(0,\f{\pi}{2})$, we can maximize the lower bound by taking $\theta=\arctan\left(\f{\cl(K)}{2\cl(M)}\right)$. Consequently we have
\begin{equation}
\label{compute in Om2}
\int_{\Om_2} \left( \f\e2|\na u|^2+\f{1}{\e}W(u) \right)\,dx\,dy\geq \sqrt{\cl(K)^2+4\cl(M)^2} (\s-C_W\e^{\f13}).    
\end{equation}

Moreover, by the definition of $K,M$ and the boundary condition \eqref{def of g_eps},
\beqo
{\cl}(K)\geq \sqrt{3}-2c_0\e-\e^{\f13},\quad {\cl}(M)\geq y^*+\f12-C\e^{\f13}-c_0\e.
\eeqo

As a consequence, there exists a constant $C$ such that for small enough $\e$, 
\beqo
\sqrt{\cl(K)^2+4\cl(M)^2}\geq \sqrt{3+4(y^*+\f12)^2}-C\e^{\f13}.
\eeqo
From this and \eqref{compute in Om2} we obtain
\begin{equation}\label{energy in Om2}
    \int_{\Om_2} \left( \f\e2|\na u|^2+\f{1}{\e}W(u) \right)\,dx\,dy\geq \sqrt{3+4(y^*+\f12)^2}\cdot \s -C\e^{\f13},
\end{equation}
where $C$ is a constant independent of $\e$. Finally, we combine the estimates \eqref{energy on Om_1} and \eqref{energy in Om2} to get
\beqo
\begin{split}
  &\int_{B_1} \left( \f\e2|\na u|^2+\f{1}{\e}W(u) \right)\,dx\,dy\\
\geq  & (1-y^*+\sqrt{3+4(y^*+\f12)^2})\sigma -C\e^{\f13}-c_0\s\e\\
\geq & 3\sigma-C\e^{\f13},
\end{split}
\eeqo
which is just the lower bound \eqref{lower bound e 1/3}. Note that in the last step we have used the fact that the function $1-x+\sqrt{3+4(x+\f12)^2}$ obtains its minimal value $3$ at $x=0$.   

\end{proof}

We can slightly modify the proof above to get the following refinement of the lower bound estimate. 

\begin{proposition}[lower bound of order $\e^{\f12}$] \label{prop epsilon 1/2}
There exist constants $C(W)$ and $\e_1$, such that for any $\e\leq \e_1$, it holds 
\beq\label{lower bound e 1/2}
\int_{B_1} \left(\f{\e}{2}|\na u_\e|^2+\f{1}{\e}W(u_\e)\right)\,dz\geq 3\sigma-C\e^{\f12}.
\eeq 
\end{proposition}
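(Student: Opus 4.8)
(Throughout we write $u=u_\e$, as in the proof of Proposition~\ref{lower bound epsilon 1/3}.) The plan is to re-run that proof with two modifications. First, replace the cut-off $\e^{1/6}$ in the definition of $\lam_i$ by $\e^{1/4}$, that is, put $\lam_i(y):=\cl\big(\g_y\cap\{|u-a_i|<\e^{1/4}\}\big)$. Then every application of Lemma~\ref{lemma: 1D energy estimate} on a slice joining an $\e^{1/4}$-sphere of some $a_i$ to an $\e^{1/4}$-sphere (or to the exact value $a_j$ attained on $\pa B_1$) of some $a_j$ gives $\s-C_W\e^{1/2}$ rather than $\s-C_W\e^{1/3}$, and wherever $u$ lies outside all the $\e^{1/4}$-balls one has $W(u)\geq\f12 c_W\e^{1/2}$; this removes every loss that stems from Lemma~\ref{lemma: 1D energy estimate}. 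Second, replace the slack $\e^{1/3}$ in the definition of $y^*$ by $\Lam\e^{1/2}$ for a constant $\Lam=\Lam(W)$ fixed below, so $y^*:=\min\{y\in[-\f12+c_0\e,1]:\lam_1(y)+\lam_2(y)\geq\cl(\g_y)-\Lam\e^{1/2}\}$. The horizontal slices in $\Om_1$ still join $a_1$ to $a_2$ \emph{exactly}, whence $\int_{\Om_1}\big(\f\e2|\na u|^2+\f1\e W\big)\geq(1-c_0\e-y^*)\s$ with no loss at all; and the slack now costs only $O(\e^{1/2})$, e.g.\ $\cl(K)\geq\sqrt3-2c_0\e-\Lam\e^{1/2}$.

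The one genuinely new point is the set $S:=\{y\in(-\f12+c_0\e,y^*):\lam_3(y)=0\}$, for which the argument of Proposition~\ref{lower bound epsilon 1/3} now only yields the useless bound $\cl(S)=O(1)$. Instead of discarding those slices, I would exploit that for $y\in S$ the bad set $\g_y\setminus\bigcup_i\{|u-a_i|<\e^{1/4}\}$ has measure $>\Lam\e^{1/2}$, and hence carries potential energy at least $\f1\e\cdot\Lam\e^{1/2}\cdot\f12 c_W\e^{1/2}=\f12 c_W\Lam$; thus, after the decomposition $W=(\sin^2\t)W+(\cos^2\t)W$ used in $\Om_2$, such a slice contributes at least $(\cos^2\t)\f12 c_W\Lam$ to the horizontal part of the $\Om_2$-energy. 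Now split into two cases according to $y^*$. If $y^*\leq\sqrt3-2$, I would not use the $\t$-decomposition: the vertical segments over $K$, with their full energy, join $a_3$ on $\pa B_1$ to an $\e^{1/4}$-neighbourhood of $a_1$ or $a_2$ at height $y^*$, so $\int_{\Om_2}\big(\f\e2|\na u|^2+\f1\e W\big)\geq\cl(K)(\s-C\e^{1/2})\geq(\sqrt3-C\e^{1/2})(\s-C\e^{1/2})$, which together with the $\Om_1$-bound and $1-y^*+\sqrt3\geq3$ already gives $J_\e(u_\e)\geq3\s-C\e^{1/2}$. If instead $y^*>\sqrt3-2$, then $\cl(M\cup S)\geq y^*+\f12-c_0\e$ is bounded below by $c_*:=\sqrt3-\f32>0$, so the maximizer $\t^*=\arctan\left(\f{\cl(K)}{2\cl(M\cup S)}\right)$ of $(\sin\t)\cl(K)+2(\cos\t)\cl(M\cup S)$ satisfies $\cos\t^*=\f{2\cl(M\cup S)}{\sqrt{\cl(K)^2+4\cl(M\cup S)^2}}\geq c_*/\sqrt3$ (using $\cl(K)\leq\sqrt3$ and $\cl(M\cup S)\leq\f32$). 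Fixing $\Lam:=\max\{1,\,4\sqrt3\,\s/(c_W c_*)\}$ makes $(\cos^2\t^*)\f12 c_W\Lam\geq2(\cos\t^*)\s$, so every $S$-slice contributes to the horizontal $\Om_2$-energy at least $2(\cos\t^*)\s\geq2(\cos\t^*)(\s-C\e^{1/2})$ --- exactly as much as an $M$-slice, which joins $a_1$ to $a_2$ through an $\e^{1/4}$-neighbourhood of $a_3$. Hence $\int_{\Om_2}\big(\f\e2|\na u|^2+\f1\e W\big)\geq(\s-C\e^{1/2})\big[(\sin\t^*)\cl(K)+2(\cos\t^*)\cl(M\cup S)\big]=(\s-C\e^{1/2})\sqrt{\cl(K)^2+4\cl(M\cup S)^2}\geq\sqrt{3+4(y^*+\f12)^2}\,\s-C\e^{1/2}$, and adding $\int_{\Om_1}(\cdots)\geq(1-c_0\e-y^*)\s$ and invoking that $t\mapsto(1-t)+\sqrt{3+4(t+\f12)^2}$ attains its minimum $3$ at $t=0$ (the fact that closes the proof of Proposition~\ref{lower bound epsilon 1/3}) completes the argument.

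The hard part is precisely this bookkeeping: the constant $\Lam$ must be chosen independently of $\e$, which is possible only because $\cos\t^*$ is bounded below by a constant, which in turn holds only once $y^*$ is bounded away from $-\f12$ --- this is what forces the dichotomy at $y^*=\sqrt3-2$, together with the (easily checked) observation that the ``first case'' estimate already present in Proposition~\ref{lower bound epsilon 1/3} is valid for \emph{every} $y^*$ and by itself settles $y^*\leq\sqrt3-2$. The remaining verifications --- that the vertical segments over $K$ do attain $a_3$ exactly on $\pa B_1$ for $x\in[-\f{\sqrt3}{2}+c_0\e,\f{\sqrt3}{2}-c_0\e]$ (from the explicit form of $g_\e$), that $\cl(K)\geq\sqrt3-2c_0\e-\Lam\e^{1/2}$ holds in every sub-case of $y^*$ (including $y^*>\f12$, where the portion of $K$ near $\pm\f{\sqrt3}{2}$ is captured by exact boundary values of $g_\e$ rather than by the slack), and the elementary inequality $\sqrt{\cl(K)^2+4\cl(M\cup S)^2}\geq\sqrt{3+4(y^*+\f12)^2}-C\e^{1/2}$ --- are routine and identical in spirit to Proposition~\ref{lower bound epsilon 1/3}.
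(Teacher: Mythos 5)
Your argument is correct, and it arrives at the estimate by a route that is genuinely different from the paper's. The shared core is the observation that, once the slack in the definition of $y^*$ is replaced by a constant multiple of $\e^{\f12}$, each slice $y\in S$ (where $\lam_3(y)=0$ but the slack is exceeded) carries $O(1)$ potential energy, and that the slack constant can be chosen large enough (depending only on $W$) so that this energy is not wasted. But the two proofs cash in this observation differently. You keep the variable split $W=(\sin^2\t)W+(\cos^2\t)W$ from Proposition~\ref{lower bound epsilon 1/3}, optimize $\t=\t^*$, show each $S$-slice contributes to the horizontal energy at least as much as an $M$-slice so that $\cl(M\cup S)=y^*+\f12-c_0\e$ replaces $\cl(M)$, recover the expression $\sqrt{3+4(y^*+\f12)^2}$, and close with the same elementary minimization. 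This forces a dichotomy at $y^*=\sqrt3-2$ to keep $\cos\t^*$ bounded away from $0$ so that $\Lam$ can be chosen independently of $\e$; the sub-case $y^*\leq\sqrt3-2$ you dispose of with a cruder vertical-only estimate in $\Om_2$. The paper instead fixes the split once and for all as $W=\f34W+\f14W$ (which is precisely the optimal $\t$ for the extremal value $y^*=0$); with this fixed split the estimate becomes \emph{linear} in $y^*$ and $\beta:=\cl(S)$, the $y^*$-dependence cancels identically against the $\Om_1$ term, and the contribution $\beta\,c_W\al/8$ of the $S$-slices directly dominates the loss $-\beta\s$ coming from $\cl(M)=y^*+\f12-c_0\e-\beta$, provided $\al$ is chosen with $c_W\al/8\geq2\s$. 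This avoids the $\sqrt{\cdot}$, the optimization in $\t$, and the dichotomy on $y^*$ entirely. What your version buys is structural uniformity with the proof of Proposition~\ref{lower bound epsilon 1/3}, and it makes transparent that the geometry of the lower bound is still governed by the function $t\mapsto 1-t+\sqrt{3+4(t+\f12)^2}$; what the paper's version buys is brevity and the absence of any case split beyond the trivial one at $y^*<-\f12+c_0\e$. Both are correct and of comparable depth.
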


\begin{proof}
Define all the analogous functions and subsets corresponding to $\e^{\f12}$. 
\begin{align}
\nonumber    &\lam_i:= \cl(\g_y\cap\{|u(x,y)-a_i|<\e^{\f14}\}),\ \ i\in\{1,2,3\},\\
\label{def of y*}    &y^*:= \min\{y\in [-\f12,1]: \lam_1(y)+\lam_2(y)\geq \cl(\g_y)-\al\e^{\f12}\},\\ 
\nonumber    &\Om_1:= B_1\cap \{(x,y): y\geq y^*\},\quad \Om_2:=B_1\cap  \{(x,y): y< y^*\},\\
\nonumber    &\zeta(x):=\min\{y^*,\sqrt{1-x^2}\},\\
\nonumber    & K:= \{ x\in [-\f{\sqrt{3}}{2}+c_0\e, \f{\sqrt{3}}{2}-c_0\e]: \ |u(x,\zeta(x))-a_i|<\e^{\f14},\ i=1\text{ or }2\},\\
\nonumber    & M:=\{y\in [-\f12+c_0\e, y^*]:\ \lam_3(y)>0 \}.
\end{align}
Here $\al$ is a constant only depending on the potential function $W$, whose value will be determined later. Also we emphasize that in the rest of the paper we use \eqref{def of y*} as the definition of $y^*$.

When $-\f12\leq y^*<-\f12+c_0\e$, using the same calculation as in \eqref{energy on Om_2 first case} and \eqref{energy estimate first case} we know the energy is strictly larger than $3\sigma$ when $\e$ is suitably small. Thus it suffices to consider the case $y^*\geq -\f12+c_0\e$, for which the set $M$ is well defined. 

Note that for $y\in [-\f12+c_0\e,y^*]\setminus M$, it holds that
\begin{align}
    \nonumber &\lam_1(y)+\lam_2(y)+\lam_3(y)<\cl(\g_y)-\al \e^{\f12},\\
    \label{energy estimate outside M} &\quad \int_{\g_y} W(u)\,dx\geq \f12c_W\al \e. 
\end{align}

We split $W=\f34W+\f14W$. Thanks to the analogous estimates as in \eqref{Om2 vertical}and \eqref{Om2 horizontal}, we have
\beq\label{energy in Om2, e^1/2}
\begin{split}
    &\int_{\Om_2}\left( \f\e2|\na u|^2+\f{1}{\e}W(u) \right)\,dx\,dy\\
    \geq &\int_K \int_{-\sqrt{1-x^2}}^{\zeta(x)} \left(\f\e2|\pa_{y}u|^2+\f{3}{4\e}W(u)\right)\,dy\, dx+ \int_M \int_{\g_{y}}  \left(  \f\e2|\pa_{x}u|^2+\f{1}{4\e} W(u) \right)\,dx\,dy\\
    &\qquad + \int_{[-\f12+c_0\e,y^*]\setminus M}\int_{\g_{y}} \f{1}{4\e} W(u)\,dx\,dy\\
    \geq &\left(\f{\sqrt3}{2} \cl(K)+\cl(M)\right)(\sigma-C_W\e^{\f12})+ \beta \f{c_W\al}{8},
\end{split}
\eeq
where $\beta:= y^*+\f12-c_0\e-\cl(M)\geq 0$. We have  
\begin{equation}\label{energy on B1, e^1/2}
    \begin{split}
        &\int_{B_1}\left( \f\e2|\na u|^2+\f{1}{\e}W(u) \right)\,dx\,dy\\
        = &\int_{\Om_2}\left( \f\e2|\na u|^2+\f{1}{\e}W(u) \right)\,dx\,dy+\int_{\Om_1}\left( \f\e2|\na u|^2+\f{1}{\e}W(u) \right)\,dx\,dy\\
        \geq & \left(\f{\sqrt3}{2} \cl(K)+\cl(M)\right)(\sigma-C_W\e^{\f12})+  \f{c_W\al\beta}{8}+\s(1-c_0\e-y^*)\\
        \geq & \left(\f{\sqrt3}{2} (\sqrt3-2c_0\e-\al\e^{\f12})+\cl(M)\right)(\sigma-C_W\e^{\f12})+\\
        &\quad + \f{c_W\al\beta}{8}+\s(1-c_0\e-y^*)\\
        = & 3\sigma -(2+y^*)C_W\e^{\f12}+\beta(\f{c_W\al}{8}-\sigma+C_W\e^{\f12})\\
        &\quad - \f{\sqrt{3}}{2} \al\e^{\f12}(\s-C_W\e^{\f12})-c_0\e \left[ (\s-C_W\e^{\f12})(1+\sqrt{3})+\s \right]\\
        \geq & 3\s-(3C_W+\f{\sqrt{3}}{2}\al\s)\e^{\f12}+\beta(\f{c_W\al}{8}-\sigma+C_W\e^{\f12})-(2+\sqrt3)c_0\s\e+\f{\sqrt{3}}{2}\al C_W\e.
    \end{split}
\end{equation}
Note that from the second line to the third line we have used the estimates \eqref{energy in Om2, e^1/2} and \eqref{energy on Om_1}.
Now we choose $\al=\al(W)$ such that $\f{c_W\al}{8}\geq 2\s$. Then it follows easily from \eqref{energy on B1, e^1/2} that there exists a constant $C=C(W)$ such that the lower bound \eqref{lower bound e 1/2} holds for sufficiently small $\e$. 
\end{proof}

\section{Localization of the transition layer}\label{sec:localization}

Let $\lam_i\ (i=1,2,3)$, $y^*$, $\Om_j\ (j=1,2)$, $\zeta(x)$, $K$, $M$ be defined in the same way as in the proof of Proposition \ref{prop epsilon 1/2}. Also recall the definition $\beta:=y^*+\f12-c_0\e-\cl(M)$. We can further derive the following lemma.

\begin{lemma}
\label{estimate of y*}
There exists a constant $C$, which depends on the potential functional $W$, such that 
\begin{equation}\label{ineq est y^*}
    |y^*|\leq C\e^{1/4},
\end{equation}
where $y^*$ is defined in \eqref{def of y*}.
\end{lemma}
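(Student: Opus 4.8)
**Proof proposal for Lemma \ref{estimate of y*}.**

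The plan is to use both the upper bound \eqref{upper bound} and the order-$\e^{\f12}$ lower bound machinery from Proposition \ref{prop epsilon 1/2}, this time extracting information about $y^*$ itself rather than just about the total energy. The key point is that the lower bound computation in \eqref{energy on B1, e^1/2} already contains the quantity $y^*$ explicitly: the energy in $\Om_1$ contributes (at least) $\s(1-c_0\e-y^*)$, and the energy in $\Om_2$ contributes (at least something like) $\sqrt{3+4(y^*+\f12)^2}\,\s$ up to lower-order errors, coming from $\f{\sqrt3}{2}\cl(K)+\cl(M)$ together with the fact that $\cl(K)\geq \sqrt3 - o(1)$ and $\cl(M)\geq y^*+\f12 - o(1)$ and an optimization over the split of $W$. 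So the total energy is bounded below by $\Phi(y^*)\s - C\e^{\f12}$, where $\Phi(t) = 1-t+\sqrt{3+4(t+\f12)^2}$, exactly as at the end of the proof of Proposition \ref{lower bound epsilon 1/3}.

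First I would redo the $\Om_2$ estimate keeping $\cl(M)$ and $\beta = y^*+\f12-c_0\e-\cl(M)$ as in \eqref{energy in Om2, e^1/2}, but now tracking the dependence on $y^*$ without discarding it; combined with \eqref{energy on Om_1} this yields
\beqo
3\sigma + C\e \;\geq\; J_\e(u_\e) \;\geq\; \Phi(y^*)\,\sigma \;-\; C'\e^{\f12},
\eeqo
the left inequality being the upper bound Lemma \ref{lemma:upper bound}. Hence $\Phi(y^*) \leq 3 + C''\e^{\f12}/\sigma$. The second step is the elementary analysis of $\Phi$: it is smooth, strictly convex, attains its unique minimum value $\Phi(0)=3$ at $t=0$, and $\Phi''(0)>0$, so a Taylor expansion gives $\Phi(t)\geq 3 + c\,t^2$ for $t$ in a fixed neighborhood of $0$ (and $\Phi(t)\geq 3+c'$ outside it). Therefore $c\,(y^*)^2 \leq C''\e^{\f12}/\sigma$, i.e. $|y^*|\leq C\e^{\f14}$, which is \eqref{ineq est y^*}. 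One must also note the case $-\f12\leq y^* < -\f12 + c_0\e$ is handled exactly as in Proposition \ref{prop epsilon 1/2} (the energy there already exceeds $3\sigma$, contradicting the upper bound for small $\e$, so this case does not occur), and the case $y^*\geq -\f12+c_0\e$ is the one treated above.

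The main obstacle I anticipate is bookkeeping rather than conceptual: one has to make sure that all the error terms accumulated in passing from \eqref{energy in Om2, e^1/2} to the bound $\Phi(y^*)\sigma - C'\e^{\f12}$ are genuinely $O(\e^{\f12})$ uniformly in $y^*\in[-\f12,1]$ — in particular the terms involving $\al\e^{\f12}$, $\beta(\f{c_W\al}{8}-\sigma)$ (which is $\geq 0$ by the choice $\f{c_W\al}{8}\geq 2\sigma$ and may simply be dropped), and the $\cl(K),\cl(M)$ lower bounds — and that the constant $c$ in the convexity estimate $\Phi(t)-3\geq ct^2$ depends only on $W$ (indeed only on the geometry of the disk, hence is absolute). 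Once the inequality $\Phi(y^*)\leq 3 + O(\e^{\f12})$ is in hand, the convexity step is immediate. A minor point worth stating explicitly is that $y^*\leq 1$ automatically, so only the lower tail $y^*<0$ and the range $0\leq y^*\leq 1$ need the quadratic estimate, both covered by strict convexity of $\Phi$ on $[-\f12,1]$.
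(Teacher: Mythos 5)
Your proposal is correct and follows essentially the same route as the paper: derive $\beta\leq C\e^{1/2}$ from \eqref{energy on B1, e^1/2} together with the upper bound (using $c_W\al/8\geq 2\sigma$), then re-optimize over the horizontal/vertical split of $W$ (the $\kappa=2\cl(M)/\cl(K)$ ratio, as in the $\e^{1/3}$ proof) to obtain $J_\e(u_\e)\geq\Phi(y^*)\sigma-C\e^{1/2}$, and finally conclude $|y^*|\leq C\e^{1/4}$ from $\Phi(t)-3\gtrsim t^2$ near $t=0$. One minor clarification worth making explicit: the $\beta$-term in \eqref{energy on B1, e^1/2} should not simply be dropped — it must first be used (with the fixed $\frac34W+\frac14W$ split) to deduce $\beta\leq C\e^{1/2}$, because that bound is what guarantees $\cl(M)\geq y^*+\frac12-C\e^{1/2}$, and without it the optimized bound $\sqrt{\cl(K)^2+4\cl(M)^2}\sigma$ carries no information about $y^*$; your remark about "the $\cl(K),\cl(M)$ lower bounds" shows you are aware of this, but the two-step structure (bound $\beta$ first, then re-split) should be stated rather than folded into a single estimate.
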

\begin{proof}
First of all, by the energy upper bound \eqref{upper bound} and $\f{c_W\al}{8}\geq 2\sigma$, we get from \eqref{energy on B1, e^1/2}
\beq\label{estimate of beta}
\beta \leq C\e^{\f12},
\eeq
where $C$ is a constant depending only on $W$. Now we take the ratio $\kappa=\f{2L^1(M)}{L^1(K)}$ and calculate in the same way as in the proof of Proposition \ref{lower bound epsilon 1/3} to get
\begin{equation}\label{est of y*: lower bound}
   \int_{B_1}\left( \f{\e}{2}|\na u_\e|^2+\f{1}{\e}W(u_\e) \right)\,dz \geq (1-y^*+\sqrt{3+4(y^*+\f12)^2})\sigma -C\e^{\f12}.
\end{equation}
Combining \eqref{est of y*: lower bound} and \eqref{upper bound} yields
\beqo
\begin{split}
&1-y^*+\sqrt{3+4(y^*+\f12)^2}\leq 3+C\e^{\f12}\\
\Rightarrow &\quad  |y^*|\leq C\e^{\f14}\ \ \text{ for some constant }C=C(W).    
\end{split}
\eeqo
\end{proof}

\begin{Remark}
    By applying the procedure above to each of the ``legs" of the triod we obtain an equilateral triangle centered at the origin and of size O($\e^{\f14}$). The ``center" of the junction $(x*,y*)$ is located in this triangle. 
\end{Remark}

Let $T$ denote the union of three line segments (the so-called \emph{triod}), each two of which form an angle of $\f{2\pi}{3}$.
\beqo
T:=\left\{(0,y),\ y\in [0,1]\right\}\cap \left\{(x,\f{\sqrt{3}}{3}x),\ x\in[-\f{\sqrt{3}}{2},0]\right\}\cap \left\{(x,-\f{\sqrt{3}}{3}x),\ x\in [0,\f{\sqrt{3}}{2}]\right\}.
\eeqo

$T$ divides the closed unit disk $\overline{B}_1$ into three regions:
\beqo
D_i:=\{(r\cos\theta,r\sin\theta): \ 0<r<1,\ \f{2(i-2)\pi}{3}<\theta<\f{2(i-1)\pi}{3}\},\quad i=1,2,3.
\eeqo

We mention in passing that according to the $\G$-convergence result in Gazoulis\cite{gazoulis}, it holds that
\beqo
\lim\limits_{\e\ri 0} \|u_\e-u_0\|_{L^1(B_1)}=0,
\eeqo
where
\beqo
u_0=\sum_{i=1}^3 a_i\chi_{D_i}.
\eeqo
The proposition below provides a quantitative refinement of this convergence. 

For $\g\geq 0$, we recall the diffuse interface is defined as 
\beqo
I_{\e,\g}:=\{z\in B_1:\  |u_\e(z)-a_i|>\g \text{ for all }i=1,2,3\}.
\eeqo

The main result of this section is to show that $I_{\e,\g}$ is contained in a $O(\e^{\f14})$ neighborhood of $T$, for fixed $\g$ and suitably small $\e$. We have the following proposition.

\begin{proposition}\label{location of diffused interface}
There exists a constant $\g_0$ such that for any $0<\g\leq \g_0$, there exist constants $C=C(\g,W)$ and $\e(\g,W)$ satisfying
\beq\label{diffused interface close to Gamma}
 \forall \e<\e(\g,W),\quad I_{\e,\g}\subset N_{C\e^{\f14}}(T):=\{z\in B_1: \dist(z,T)\leq C\e^{\f14}\}.
\eeq
Moreover, there are positive constants $K$ and $k$ such that
\beq\label{distance to a_i}
|u_\e(z)-a_i|\leq Ke^{-\f{k}{\e}(\dist(z,\G)-C\e^{\f14})^+},\quad z\in D_i,\ i=1,2,3.
\eeq
\end{proposition}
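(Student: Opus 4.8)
The strategy is to combine the $\e^{1/4}$-control of the junction center $y^*$ (and, by the remark, its horizontal analogue $x^*$) with a pointwise exponential decay estimate for $u_\e$ away from the diffuse interface, obtained via linear elliptic theory. I would first establish \eqref{distance to a_i} and then deduce \eqref{diffused interface close to Gamma} as a consequence: indeed, if $z\in D_i$ has $\dist(z,\G) > C\e^{1/4} + \frac{\e}{k}\log\frac{K}{\g}$, then \eqref{distance to a_i} forces $|u_\e(z)-a_i|\le\g$, so $z\notin I_{\e,\g}$; absorbing the logarithmic term into a slightly larger constant $C$ gives \eqref{diffused interface close to Gamma}. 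So the real work is the pointwise estimate.

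\textbf{Step 1: rerun the slicing argument along all three legs of the triod.} The proof of Proposition \ref{prop epsilon 1/2} and Lemma \ref{estimate of y*} was carried out using horizontal slices $\g_y$, isolating the ``top'' leg $\{(0,y):y\in[0,1]\}$. By rotating by $\frac{2\pi}{3}$ and $\frac{4\pi}{3}$ and repeating verbatim, one obtains three lines through points within $O(\e^{1/4})$ of the origin, each making the appropriate angle, and — crucially — an estimate of the form: for each leg $\ell$, the set of slices transverse to $\ell$ on which $u_\e$ fails to be $\e^{1/4}$-close to one of the two relevant minima on a set of measure $\le\alpha\e^{1/2}$ is exceptional (measure $O(\e^{1/2})$ by the $\beta\le C\e^{1/2}$ estimate \eqref{estimate of beta}). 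Combining the three, together with the variational maximum principle from \cite{AF2} (which confines $u_\e$ to a neighborhood of the triangle with vertices $a_1,a_2,a_3$), one concludes that on the bulk of each sector $D_i$, $u_\e$ is within $\e^{1/4}$ of $a_i$ outside an $O(\e^{1/4})$ tube around $T$. This identifies, up to $O(\e^{1/4})$, in which well $u_\e$ sits in each $D_i$.

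\textbf{Step 2: pointwise exponential decay.} Fix $i$ and let $z\in D_i$ with $d:=\dist(z,\G)$ large compared to $\e^{1/4}$. On $B(z,d)$ we have $\min_j|u_\e-a_j|\le\g$ uniformly, but combined with Step 1 (continuity / connectedness of the set where $u_\e$ is close to a single $a_j$) it must be that $|u_\e-a_i|\le\g$ throughout $B(z, d - C\e^{1/4})$. There, by (H1), $W_{uu}$ is positive definite near $a_i$, so $w:=u_\e-a_i$ satisfies a differential inequality $\e^2\Delta w = \e^2 W_u(u_\e)\cdot(\cdot)$ that, after linearizing, yields $\e^2\Delta(|w|^2)\ge c_1|w|^2 - C\e^2|\nabla w|^2$-type control; more simply one uses the standard comparison argument (as in \cite[Ch.~5]{afs-book} or the classical scalar computation): $|u_\e - a_i|$ is a subsolution of $\e^2\Delta\phi = \frac{c_1}{2}\phi$ on the region where it is small, hence by comparison with the radial barrier $K e^{-\frac{k}{\e}\rho}$ on $B(z, d-C\e^{1/4})$ (with $k=\sqrt{c_1/2}$, say) and the a priori bound $|u_\e|\le M$ from \eqref{gradient bound} on the boundary, we get $|u_\e(z)-a_i|\le K e^{-\frac{k}{\e}(d - C\e^{1/4})}$, which is \eqref{distance to a_i} once we write $(\cdot)^+$ to cover the trivial case $d\le C\e^{1/4}$.

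\textbf{Main obstacle.} The delicate point is Step 1: converting the \emph{integral/measure-theoretic} slice information from the lower-bound argument into a genuine \emph{topological} statement — namely that on each $D_i$ minus an $O(\e^{1/4})$-tube the values of $u_\e$ actually lie in the single well $a_i$ (rather than jumping between wells within a sector). This requires ruling out extra transition layers interior to a sector, which is exactly where the slice-by-slice energy accounting must be upgraded: one shows that an interior transition would cost an extra $\approx\sigma$ on a positive-measure family of slices, contradicting the upper bound \eqref{upper bound} $\le 3\sigma + C\e$ and the already-accounted $3\sigma$ from the three legs. The bookkeeping of ``which $O(\e^{1/4})$-exceptional slices'' and patching the good slices into a connected good region — together with invoking the variational maximum principle of \cite{AF2} to keep $u_\e$ near the triangle $\triangle a_1a_2a_3$ so that ``close to some $a_j$'' is meaningful — is the technical heart, whereas Step 2 is the now-standard elliptic barrier computation.
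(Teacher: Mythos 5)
Your overall plan — control of the junction center, variational maximum principle, barrier argument for exponential decay — correctly identifies the ingredients, and your Step 2 (the elliptic barrier) matches the paper's final step. However, the "main obstacle" you flag at the end is a genuine gap that you have not closed, and the route you sketch in Step 1 does not close it.

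The issue is this: the $\beta\le C\e^{1/2}$ estimate \eqref{estimate of beta} and its rotated analogues control the measure of slices on which the \emph{level-set measures} $\lam_i(y)$ are deficient; it does not give, for any fixed slice, a pointwise bound $|u_\e-a_i|\le\g$ on the portion of that slice away from the triod. Re-running the horizontal slicing of Propositions~\ref{lower bound epsilon 1/3}--\ref{prop epsilon 1/2} in the three rotated frames only re-proves the $O(\e^{1/4})$ localization of the junction center (the Remark after Lemma~\ref{estimate of y*}). It does not produce a set on whose boundary you can verify the hypothesis of the variational maximum principle, which requires $|u_\e-a_1|\le\g$ on the \emph{entire} boundary of some subregion of $D_1$.

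The paper closes this gap with a different construction. It introduces a family of segments $l_1^t$, $l_2^t$ running parallel to the two edges of $D_1$, each starting on the triod and ending on $\pa B_1$ where the Dirichlet data forces $u_\e=a_1$. It then defines the exceptional sets $A,B$ of parameters $t$ for which $u_\e$ is \emph{not} uniformly $\g$-close to $a_1$ along $l_1^t$ (resp.\ $l_2^t$), and bounds $\mathcal{H}^1(A),\mathcal{H}^1(B)\le C\e^{1/4}$ via a \emph{refined} lower bound over $\Om_1$: the earlier lower bound on $\Om_1$ used only the horizontal gradient, so the vertical gradient is available to extract an extra quantity $\gtrsim C_1\mathcal{H}^1(A)$ on the bad slices, which the upper bound $3\s+C\e$ then forces to be $O(\e^{1/2})$ after squaring (see \eqref{est on Om1: new}--\eqref{est of H^1(A)}). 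This yields a \emph{single} good $t_0=O(\e^{1/4})$ with $|u_\e-a_1|\le\g$ on $l_1^{t_0}\cup l_2^{t_0}$, and the boundary-arc piece is handled by $g_\e$. Only then does the variational maximum principle give $|u_\e-a_1|\le\g$ throughout the enclosed region $D^1_\g$, which is what \eqref{diffused interface close to Gamma} requires. Your proposal names this step as the technical heart but does not supply the new slicing family or the refined energy split that makes it work.
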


\begin{figure}[ht]
\begin{tikzpicture}[thick]
\draw [black!20!green,domain=0:85] plot ({3.5*cos(\x)}, {3.5*sin(\x)});
\draw [black!20!green,domain=335:360] plot ({3.5*cos(\x)}, {3.5*sin(\x)});
\draw [dotted,domain=85:95] plot ({3.5*cos(\x)}, {3.5*sin(\x)});
\draw [red,domain=95:205] plot ({3.5*cos(\x)}, {3.5*sin(\x)});
\draw [dotted,domain=205:215] plot ({3.5*cos(\x)}, {3.5*sin(\x)});
\draw [blue,domain=215:325] plot ({3.5*cos(\x)}, {3.5*sin(\x)});
\draw [dotted,domain=325:335] plot ({3.5*cos(\x)}, {3.5*sin(\x)});
\draw (-3.48,0.2)--(3.48,0.2) node[above left] {$y=y^*$};
\draw [dashed] ({3.5*cos(210)},{3.5*sin(210)})--(0,0);
\draw[dashed] ({3.5*cos(150)},{3.5*sin(150)})--({3.5*cos(330)},{3.5*sin(330)});
\draw [dashed] (0,3.5)--(0,0);
\path (-0.6, {sqrt(3.5*3.5-0.6*0.6)}) coordinate (B);
\draw (-0.6,{0.6/sqrt(3)})--(B) node at (-0.7,2.5) [right] {$l_1^t$};
\path[name path=line] (-0.6,{0.6/sqrt(3)})--(-3.6,{0.6/sqrt(3)-sqrt(3)});
\path[name path=circle] (0,0) circle (3.5);
\path[name intersections={of= line and circle, by=A}];
\draw (A)--(-0.6,{0.6/sqrt(3)}) node at (-2.3,-0.75) [right]{$l_2^t$};
\fill[pattern={Lines[angle=-45,distance=7pt]},pattern color=red] (A)--(-0.6,{0.6/sqrt(3)}) --(B) arc (100:200:3.5);
\draw [<->] (-0.6,1.2)--(0,1.2) node at (-0.3,1.2) [below] {$t$}; 
\end{tikzpicture}
\caption{Defintion of $l_1^t$, $l_2^t$. For any $z$ in the red shaded region, $|u(z)-a_1|\leq \gamma$.}
\label{pic2}
\end{figure}

\begin{proof}
For simplicity we write $u_\e=u$. We fix $C_0$ as the constant in \eqref{ineq est y^*}, i.e. 
\beq\label{ineq est y* 2}
y^*\leq C_0\e^{\f14}.
\eeq

For any $t\in [0,\f12]$, we define the line segments (see Figure \ref{pic2})
\begin{equation}
\begin{split}
    &l_1^t:=\{(-t,y):\ y\geq \f{\sqrt{3}}{3}t\}\cap \bar{D}_1,\\
    &l_2^t:=\{(x,y):\ \f{y-\f{\sqrt{3}}{3}t}{x+t}=\f{\sqrt3}{3},\; x\leq -t\}\cap \bar{D}_1. 
\end{split}
\end{equation}
Thanks to \eqref{ineq est y* 2}, we have
\beqo
t\geq \sqrt3 C_0 \e^{\f14}\; \Rightarrow  \; l_1^t\subset \overline{\Om}_1. 
\eeqo

Let $\g_0$ be a positive constant which will be determined later. Now we fix $\g\leq \g_0$ and set
\begin{equation*}
    A:=\{t: t\in [\sqrt3C_0\e^{\f14},\f12], \; \max\limits_{z\in l_1^t} |u(z)-a_1|>\g\}.
\end{equation*}
If $A=\varnothing$ then we can proceed to \eqref{def of B} below. We will show the measure of $A$ is of order $\e^{\f14}$. The proof will utilize that the part of the lower bound over $\Om_1$ derived above is based entirely on the horizontal gradient, and thus the vertical gradient can be added to produce an improvement. See \cite[Lemma 4.3]{AF} for a similar idea. 

From the definition we know that for any $t\in A$, $l_1^t\subset \overline{\Om}_1$ and there exists a point $z_t\in l_1^t$ such that 
\beqo
|u(z_t)-a_1|>\g.
\eeqo
Also from the boundary condition \eqref{def of g_eps} it follows that
\beqo
u((-t,\sqrt{1-t^2}))=a_1, \quad (-t,\sqrt{1-t^2})\in  l_1^t\cap \pa B_1, \quad t\geq c_0.
\eeqo
Therefore for any $t\in A$, there exists a constant $C_1:=C_1(\g,W)$ such that 
\beq\label{energy on l_1^t}
\int_{\f{\sqrt3t}{3}}^{\sqrt{1-t^2}} \f{\e}{2}|\pa_y u|^2+\f{1}{\e}W(u)\,dy\geq C_1.
\eeq
Set 
\beqo
\kappa:=\f{\mathcal{H}^1(A)C_1}{\s(1-c_0\e-y^*)},
\eeqo
we recalculate the energy on $\Om_1$ using \eqref{energy on Om_1} and \eqref{energy on l_1^t}:
\begin{equation}\label{est on Om1: new}
    \begin{split}
        &\int_{\Om_1} \left(\f{\e}{2}|\na u|^2+\f{1}{\e}W(u)\right)\,dxdy\\
        = & \int_{\Om_1} \left( \f{\e}{2}|\pa_x u|^2+\f{1}{(1+\kappa^2)\e}W(u) \right)\,dxdy+\int_{\Om_1}\left( \f{\e}{2}|\pa_y u|^2+\f{\kappa^2}{(1+\kappa^2)\e} W(u) \right)\,dydx\\
        \geq & \f{1}{\sqrt{1+\kappa^2}} \int_{\Om_1} \left(\f{\e\sqrt{1+\kappa^2}}{2} |\pa_x u|^2 +\f{1}{\sqrt{1+\kappa^2}\e}W(u)  \right)\,dxdy\\
        &\qquad  + \f{\kappa}{\sqrt{1+\kappa^2}} \int_{\Om_1}\left(\f{\sqrt{1+\kappa^2}\e}{2\kappa} |\pa_y u|^2+\f{\kappa}{\sqrt{1+\kappa^2}\e} W(u)  \right)\,dydx\\
        \geq & \f{1}{\sqrt{1+\kappa^2}}\sigma(1-c_0\e-y^*)+\f{\kappa}{\sqrt{1+\kappa^2}}\int_A\int_{l_1^t} \left(\f{\sqrt{1+\kappa^2}\e}{2\kappa} |\pa_y u|^2+\f{\kappa}{\sqrt{1+\kappa^2}\e} W(u)  \right)\,dydx\\
        \geq & \f{1}{\sqrt{1+\kappa^2}} \sigma (1-c_0\e-y^*)+\f{\kappa}{\sqrt{1+\kappa^2}}\mathcal{H}^1(A)C_1\\
        =& \sqrt{\sigma^2(1-c_0\e-y^*)^2+(C_1\mathcal{H}^1(A))^2}.
    \end{split}
\end{equation}

From \eqref{est on Om1: new} we can update \eqref{est of y*: lower bound} to be 
\beqo
\begin{split}
    & \int_{B_1} \left( \f{\e}{2}|\na u|^2+\f{1}{\e} W(u)  \right)\\
    \geq & \sqrt{\sigma^2(1-c_0\e-y^*)^2+(C_1\mathcal{H}^1(A))^2}+\sqrt{3+4(y^*+\f12)^2}\sigma-C\e^{\f12} \\
    \geq& (1-y^*+\sqrt{3+4(y^*+\f12)^2})\sigma-C\e^{\f12}+\f{(C_1\mathcal{H}^1(A))^2}{2\sigma(1-c_0\e-y^*)},
\end{split}
\eeqo
which together with Lemma \ref{estimate of y*} and the upper bound \eqref{upper bound} implies 
\beq\label{est of H^1(A)}
\f{(C_1\mathcal{H}^1(A))^2}{2\sigma (1-c_0\e-y^*)}\leq C\e^{\f12}\; \Rightarrow \; \mathcal{H}^1(A)\leq C_2(\g,W) \e^{\f14}. 
\eeq

We also set
\beq\label{def of B}
B:=\{t: t\in [\sqrt3C_0\e^{\f14},\f12], \; \max\limits_{z\in l_2^t} |u(z)-a_1|>\g\}.
\eeq
An analogous computation implies 
\beq\label{est of H^1(B)}
\mathcal{H}^1(B)\leq C_2(\g,W)\e^{\f14}. 
\eeq

Set 
\beqo
C_3(\g,W):=\sqrt3 C_0+3C_2,\qquad \e(\g,W)=\f{1}{16C_3^4}.
\eeqo
We fix a small $\e< \e(\g,W)$.  Then from \eqref{est of H^1(A)} and \eqref{est of H^1(B)} it follows that  there exists $t_0\in [\sqrt3C_0\e^{\f14}, C_3\e^{\f14}]$ (note that $(\sqrt{3}C_0+3C_2)\e^{\f14}<\f12$ by the choice of $\e$) such that 
\beqo
 |u(z)-a_1|\leq \g,\quad \forall z\in l_1^{t_0}\cup l_2^{t_0}.
\eeqo

Let $D^1_\g$ denote the region enclosed by $l_1^{t_0}$, $l_2^{t_0}$ and $\pa B_1$. It follows that 
\beqo
|u(z)-a_1|\leq \g,\quad \forall z\in \pa D^1_\g.
\eeqo

By the variational maximum principle \cite{AF2} (cf. Theorem 4.1 in \cite{afs-book}), there exists $\g_0$, which only depends on $W$, such that if $\g<\g_0$ and $u$ satisfies
\begin{equation*}
\begin{split}
   &u \text{ minimizes }\int_{D^1_\g} \left( \f{\e}{2}|\na u|^2+\f{1}{\e}W(u) \right)\,dz,\\
   &\quad |u(z)-a_1|\leq \g \ \text{ on }\pa D^1_\g,
\end{split}
\end{equation*}
then 
\begin{equation}\label{maximum principle}
    |u(z)-a_1|\leq \g\ \text{ on }D^1_\g.
\end{equation}
This further implies that the diffused interface $I_{\e,\g}\cap D_1$ is contained in a $C_3\e^{\f14}$ neighborhood of $T$. The same argument works for $I_{\e,\g}\cap D_j$ for $j=2,3$ and we conclude the proof of the first part \eqref{diffused interface close to Gamma}.

Finally from \eqref{maximum principle} and linear elliptic theory the exponential decay estimate \eqref{distance to a_i} follows, which completes the proof. 

\end{proof}

\section{Width of the transition layer}\label{sec:width}

Set 
\begin{equation*}
    \G_{\e,\g}^i:= \{z\in\bar{B}_1:\ |u_\e(z)-a_i|=\g\}.
\end{equation*}
Then by definition of the diffused interface $I_{\e,\g}$  we have
\beqo
\pa I_{\e,\g}\subset \bigcup\limits_{i=1}^3 \G_{\e,\g}^i.
\eeqo

The following result shows the ``width" of $I_{\e,\g}$ is controlled by $O(\e)$. 
\begin{proposition}\label{width of diffused interface}
Fix $\g< \min\{\g_0,\min\limits_{i\neq j\in\{1,2,3\}} \f12 |a_i-a_j|\}$. There exists a constant $C=C(\g,W)$ such that for any $i\in \{1,2,3\}$ and sufficiently small $\e$, 
\begin{equation}\label{distance of G^i to G^j}
    \G^i_{\e,\g}\subset N_{C\e}(\bigcup\limits_{j\neq i}\G^i_{\e,\g})=\{z: \dist(z,\bigcup\limits_{j\neq i}\G^i_{\e,\g})\leq C\e\}.
\end{equation}
\end{proposition}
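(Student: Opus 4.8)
The plan is to recast \eqref{distance of G^i to G^j} in the pointwise form already flagged as \eqref{intro: est r1}: for an arbitrary $z_1\in\G^i_{\e,\g}$ set
\[
r_1:=\max\{r:\ B(z_1,r)\cap(\textstyle\bigcup_{j\neq i}\G^j_{\e,\g})=\varnothing\},
\]
and prove $r_1\le C(\g,W)\e$. I treat first the case $B(z_1,r_1)\subset B_1$, postponing the boundary case. The starting remark is that on the whole of $B(z_1,r_1)$ the minimizer $u_\e$ stays $\g$-away from every well $a_j$ with $j\neq i$: the ball is connected, $|u_\e-a_j|$ is continuous, at the centre $|u_\e(z_1)-a_j|\ge|a_i-a_j|-|u_\e(z_1)-a_i|=|a_i-a_j|-\g>\g$ by the restriction on $\g$, and $|u_\e-a_j|$ is never equal to $\g$ on $B(z_1,r_1)$ by the definition of $r_1$; hence $|u_\e-a_j|>\g$ there for all $j\neq i$.

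If $r_1\le \ell\e$ (with $\ell$ the constant below) there is nothing to prove, so assume $r_1>\ell\e$. Since $u_\e$ is a global minimizer and $|u_\e(z_1)-a_i|=\g$, the vector Caffarelli--C\'ordoba density estimate (\cite{AF3}, see also \cite[Theorem 5.2]{afs-book}) furnishes constants $\mu=\mu(\g,W)>0$, $\ell=\ell(\g,W)>0$ with
\[
\big|\{z\in B(z_1,r):\ |u_\e(z)-a_i|\ge\g\}\big|\ \ge\ \mu\,r^2,\qquad \ell\e\le r\le r_1 .
\]
On the set $\{|u_\e-a_i|\ge\g\}\cap B(z_1,r_1)$ we then have $\min_k|u_\e-a_k|\ge\g$ by the first paragraph, so Lemma \ref{lemma: potential energy estimate} gives $W(u_\e)\ge\f12 c_W\g^2$ there, and integrating,
\[
J_\e\big(u_\e,B(z_1,r_1)\big)\ \ge\ \f1\e\int_{B(z_1,r_1)}W(u_\e)\,dz\ \ge\ \f{c_W\g^2\mu}{2\e}\,r_1^2 .
\]

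For the matching \emph{upper} bound I use minimality against an explicit competitor $v$: let $v=u_\e$ on $\pa B(z_1,r_1)$, $v\equiv a_i$ on $B(z_1,r_1-\e)$, and let $v$ interpolate radially (with a fixed cutoff) on the annulus $B(z_1,r_1)\setminus B(z_1,r_1-\e)$. Using $|u_\e|\le M$ and $|\na u_\e|\le M/\e$ from \eqref{gradient bound} one gets $|\na v|\le CM/\e$ on that annulus, whose area is $\le 2\pi r_1\e$, and $W(v)\le\max_{|w|\le M}W$, so $J_\e(v,B(z_1,r_1))\le C(W)\,r_1$. Since $v=u_\e$ on $\pa B(z_1,r_1)$ and the ball lies in $B_1$, minimality gives $J_\e(u_\e,B(z_1,r_1))\le C(W)r_1$, and comparing with the lower bound,
\[
\f{c_W\g^2\mu}{2\e}\,r_1^2\ \le\ C(W)\,r_1\quad\Longrightarrow\quad r_1\ \le\ \f{2C(W)}{c_W\g^2\mu}\,\e=:C(\g,W)\,\e .
\]
When $B(z_1,r_1)$ meets $\pa B_1$ one repeats the argument on $B(z_1,r_1)\cap B_1$: on the portion of $\pa B_1$ contained in this ball, $u_\e=g_\e$ is, by the explicit formula \eqref{def of g_eps}, either identically $a_i$ or contained in a transition interval of length $O(\e)$, so either the competitor extends verbatim or $r_1=O(\e)$ is read off directly from \eqref{def of g_eps}.

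The only delicate point I anticipate is the bookkeeping in invoking the density estimate: one must check that the threshold in \cite[Theorem 5.2]{afs-book} is compatible with the value $|u_\e(z_1)-a_i|=\g$; if the cited statement uses a strict inequality or a different absolute constant, one replaces $\g$ by a fixed fraction of $\g$ everywhere, affecting only $\mu,\ell,C$. The structural content is otherwise short: the density estimate \textbf{forces} an amount of potential energy $\sim r_1^2/\e$ into $B(z_1,r_1)$, while minimality \textbf{caps} the energy there at $\sim r_1$; note that neither the global upper bound \eqref{upper bound} nor the density estimate alone yields the sharp power $\e$ — it is their combination that does, which is also why \eqref{intro: est r1} does not need any of the earlier estimates of the paper.
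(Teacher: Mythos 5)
Your proposal is correct and follows essentially the same strategy as the paper: define $r_1$ as you do, use the Caffarelli--C\'ordoba density estimate together with Lemma \ref{lemma: potential energy estimate} to force $J_\e(u_\e,B(z_1,r_1))\gtrsim r_1^2/\e$, and compare with the same radial cut-off competitor that gives $J_\e\lesssim r_1$. The only cosmetic differences are that the paper first blows up ($v_\e(\z)=u_\e(z_1+\e\z)$) before invoking the density estimate — equivalent after rescaling — and that the paper is explicit about working at the level $\g/2$ (obtained from $|\na u_\e|\le M/\e$) rather than $\g$, a point you already flag as adjusting only the constants.
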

\begin{proof}
For simplicity we write $u_\e=u$. Without loss of generality, we prove \eqref{distance of G^i to G^j} for $i=1$. We pick an arbitrary point $z_1\in \G^1_{\e,\g}$ and set
\beqo
r_1:=\max \left\{r: B(z_1,r)\cap \left(  \G^2_{\e,\g}\bigcup \G^3_{\e,\g} \right)=\emptyset\right\}.
\eeqo
Since $\g<\min\limits_{i\neq j\in\{1,2,3\}} \f12 \vert a_i-a_j\vert $, we have $\vert u(z_1)-a_j\vert>\g$ for $j=2,3$. Then from the definition of $r_1$ it is not hard to show that
\beqo
\vert u(z)-a_j\vert\geq \g, \quad j=2,3,\ z\in B(z_1,r_1).
\eeqo

We define the blow up map $v_\e(\zeta):=u_\e(z_1+\e \zeta)$. Then $v_\e$ satisfies
\begin{gather*}
        v_\e  \text{ minimizes } \int_{\Om/\e} \left(\f12\vert\na u\vert^2+W(u)\right)\,d \zeta,\\
         \vert v_\e(0)-a_1\vert =\g,\\
         \vert v_\e(\z)-a_j\vert \geq \g,\quad j=2,3,\ \z\in B(0,\f{r_1}{\e}),\\
         \vert \na v_\e\vert \leq M,\quad M \text{ is a constant depending on } W.
\end{gather*}
Here the gradient bound follows from the smoothness assumption of $W$ and standard elliptic regularity theory. This implies 
\beqo
\vert v_\e(\z)-a_1\vert \geq \f{\g}{2},\quad \z\in B(0, \f{\g}{2M}).
\eeqo
Then by the vector version of the Caffarelli-C\'{o}rdoba density estimate \cite{AF3} (cf. Theorem 5.2 in \cite{afs-book}), there exists a positive constant $C=C(\g,W)$ such that
\beqo
\mathcal{L}^2\left(\{\vert v_\e(\z)-a_1\vert \geq \f{\g}{2}\}\cap B(0,\f{r_1}{\e})\right)\geq C\left(\f{r_1}{\e}\right)^2.
\eeqo
It follows that 
\beqo
\mathcal{L}^2\left(\{\vert v_\e(\z)-a_i\vert \geq \f{\g}{2},\ \forall i=1,2,3\}\cap B(0,\f{r_1}{\e})\right)\geq C\left(\f{r_1}{\e}\right)^2.
\eeqo

Then we compute
\beq\label{energy in Br1z1}
\begin{split}
    \int_{B(z_1,r_1)}\f{1}{\e} W(u)\,dz&=\e\int_{B(0,\f{r_1}{\e})}W(v_\e)\,d\z\\
    &\geq \e \f{c_W}{2} (\f{\g}{2})^2\cdot C (\f{r_1}{\e})^2\\
    &=C(\g,W)\f{r_1^2}{\e},
\end{split}
\eeq

On the other hand, we can derive an upper bound for the energy of $u$ inside $B(z_1,r_1)$ by constructing an energy competitor $v$ such that $v=u$ in $\Om\setminus B(z_1,r_1)$. We can assume $r_1\geq 2\e$, otherwise there is nothing to prove. Let
\begin{equation}\label{def of energy competitor v}
    v(z)=\begin{cases} u(z), & \vert z-z_1\vert \geq r_1,\\
    \f{r_1-\vert z-z_1\vert }{\e} a_1+\f{\vert z-z_1\vert -r_1+\e}{\e} u(z), &r_1-\e\leq \vert z-z_1\vert <r_1,\\
    a_1,& \vert z-z_1\vert <r_1-\e
    \end{cases}
\end{equation}

We compute the energy of $v$ inside $B(z_1,r_1)$.
\begin{equation}\label{energy in Br1z1 v}
    \begin{split}
       & \int_{B(z_1,r_1)}\f{\e}{2}\vert \na v\vert ^2+\f{1}{\e}W(v)\,dz\\
        =& \int_{r_1-\e\leq \vert z-z_1\vert <r_1} \f{\e}{2}\left\vert \na u(z)+ \na \left(\f{\vert z-z_1\vert -r_1}{\e}(u(z)-a_1)\right)\right\vert ^2\,dz\\
        &\qquad + \int_{r_1-\e\leq \vert z-z_1\vert <r_1} \f{1}{\e} W(v)\,dz\\
        \leq & \int_{r_1-\e\leq \vert z-z_1\vert <r_1} \f{\e}{2}\left\{ 4\vert \na u(z)\vert ^2+2\vert u(z)-a_1\vert ^2 \f{\vert \na \vert z-z_1\vert \vert ^2}{\e^2}  \right\}\,dz\\
        &\qquad +\int_{r_1-\e\leq \vert z-z_1\vert <r_1} \f{1}{\e} W(v)\,dz\\
        \leq &  (2\pi r_1\cdot\e)\cdot\f{\e}{2}\cdot (4\left\vert \frac{M}{\e}\right\vert ^2+2\left\vert \f{M}{\e}\right\vert ^2)+   (2\pi r_1\cdot\e)\cdot\f{c}{\e}\\
        \leq &C(M) r_1.
    \end{split}
\end{equation}
Here in the calculation we have used that $\vert u(z)\vert \leq M,\ \vert \na u(z)\vert \leq \f{M}{\e}$ and that $W(v)$ is also uniformly bounded by some constant $c$. From \eqref{energy in Br1z1}, \eqref{energy in Br1z1 v} and the minimality of $u$ we obtain
\begin{align*}
    C(\g,W)\f{r_1^2}{\e}\leq C(M)r_1\ \Rightarrow\ r_1\leq C\e, 
\end{align*}
where the last constant $C$ only depends on $\g$ and $W$. The proof of Proposition \ref{width of diffused interface} is complete. 
\end{proof}

\section{Proof of Theorem \ref{main theorem}}\label{sec:proof of main theorem}

Throughout the section, we denote by $C_0=C(\g,W)$ the constant defined in Proposition \ref{width of diffused interface}. Fix a small $\g$ and a small constant $\e<\e(\g,W)$. We always assume $\g,\e$ are suitably small constants, and the requirement on the smallness of these values will be explained along the proof.  According to Sard's Theorem and the Implicit Function Theorem, we can without loss of generality assume that each connected component of $\G^i_{\e,\g}\cap B_1$ is a $C^1$ curve\footnote{For fixed $\e$, from Sard's Theorem and the Implicit Function Theorem we have for a.a.$\g$, each connected component of $\G^i_{\e,\g}\cap B_1$ is a $C^1$ curve. Therefore even if the chosen $\{\e,\g\}$ doesn't satisfy this property, we can always take a slightly smaller $\g'$ so that this property holds for $\{\e,\g'\}$. Then one can proceed with the rest of the proof using $\{\e,\g'\}$ and all the conclusions will not be affected.}. 

For fixed small $\e,\g$, let $\hat{D}_{\e,\g}^1$ denote the connected component of the set $\{z:\vert u_\e(z)-a_1\vert <\g\}$ that connects to the boundary $\pa B_1$. $\hat{D}^1_{\e,\g}$ satisfies the following properties
\begin{enumerate}
    \item  $\{z\in \pa B_1: u_\e(z)=a_1\}\subset \pa \hat{D}_{\e,\g}^1$,
    \item $\hat{D}^1_{\e,\g}$ is simply connected by the variational maximum principle argument (cf. \cite[Theorem 4.1]{afs-book}).
    \item By Proposition \ref{location of diffused interface}, $(\pa \hat{D}^1_{\e,\g}\setminus \pa B_1) \subset N_{C\e^{\f14}}(T)$, which is an $O(\e^{\f14})$ neighborhood of the triod. This property holds only if $\e<\e(\g,W)$ where $\e(\g,W)$ is the constant in \eqref{diffused interface close to Gamma}.
\end{enumerate}

We set 
\beq\label{def: hg}
\hat{\G}^1_{\e,\g}:=\text{ closure of }(\pa \hat{D}_{\e,\g}^1\setminus \pa B_1).
\eeq
Then by the definition of $g_\e$, $\hg$ is a continuous curve that intersects with $\pa B_1$ at only two points. We set $A$ as the intersection of $\pa B_1$ and $\hg$ that is $O(\e)$ close to the point $(0,1)$ and $B$ as the other intersection.

We first prove the following lemma that is a direct consequence of the upper bound Lemma \ref{lemma:upper bound} and Proposition \ref{width of diffused interface}.

\begin{lemma}\label{claim}
Fix $\g<\min\{\g_0,\min\limits_{i,j}\vert a_i-a_j \vert, \sqrt{\f{\s}{20C_W}}\}$. For any $n>0$, there exists $\e_0:=\e_0(n,\g,W)$ such that for any $\e<\e_0$, there exists $y_0\in [\f14,\f38]$ that satisfies
\beqo
\forall\ z=(x,y)\in \hg\cap\{y\in [y_0-nC_0\e,y_0+nC_0\e]\}, \text{ it holds } \dist(z,\G^2_{\e,\g})\leq \dist(z,\G^3_{\e,\g}).
\eeqo
\end{lemma}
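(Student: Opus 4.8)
The plan is to argue by contradiction, exploiting the $O(\e)$ width estimate from Proposition \ref{width of diffused interface} together with the upper bound in Lemma \ref{lemma:upper bound}. Suppose the conclusion fails. Then for every $y_0 \in [\frac14,\frac38]$ there is a point $z = (x,y) \in \hg$ with $y$ in the $nC_0\e$-window around $y_0$ at which $\dist(z,\G^2_{\e,\g}) > \dist(z,\G^3_{\e,\g})$. In particular such a point is within $C_0\e$ of $\G^3_{\e,\g}$ by Proposition \ref{width of diffused interface} (since $z\in\pa\hat D^1_{\e,\g}$ lies on $\G^1_{\e,\g}$, hence is within $C_0\e$ of $\bigcup_{j\neq 1}\G^j_{\e,\g}$, and the closer of the two sets is $\G^3_{\e,\g}$). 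So near $z$ there is a full transition between $a_1$ and $a_3$ taking place within an $O(\e)$-thin strip. The idea is that if this happens at a \emph{$y_0$-dense} set of heights throughout $[\frac14,\frac38]$, one can integrate the $1$D transition cost $\sigma$ (via Lemma \ref{lemma: 1D energy estimate}, applied on short vertical or transverse segments through each such $z$) over a large set of disjoint strips and produce more than $3\sigma$ worth of energy inside $\Om_1$, which when combined with the already-accounted energy $\simeq 3\sigma$ of the whole configuration (upper bound) yields a contradiction for $\e$ small.

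Concretely, I would first fix the geometry: subdivide $[\frac14,\frac38]$ into $N \sim \frac{1}{8 n C_0 \e}$ consecutive closed subintervals $I_1,\dots,I_N$ of length $2nC_0\e$. If the lemma's conclusion failed for the midpoint of \emph{every} $I_m$, each would supply a point $z_m \in \hg$ with $y$-coordinate in $I_m$ and $\dist(z_m,\G^3_{\e,\g}) \le C_0\e$. Through each $z_m$ take a short segment $\tau_m$ (say vertical, or along the direction realizing the distance to $\G^3$) of length $O(\e)$ joining a point where $|u_\e - a_1| = \g$ to a point where $|u_\e - a_3| = \g$; by the choice $\g < \sqrt{\sigma/(20C_W)}$ and Lemma \ref{lemma: 1D energy estimate} (after rescaling to unit speed), the $\e$-energy along $\tau_m$ is at least $\sigma - C_W\g^2 \ge \sigma - \frac{\sigma}{20} = \frac{19}{20}\sigma$. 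These segments live in disjoint horizontal strips $\{y \in I_m\}$, so their energies add; meanwhile, by the localization Proposition \ref{location of diffused interface} and the structure of $\hg$, these extra transitions are genuinely \emph{extra} relative to the three "expected" legs of the triod — they contribute to $J_\e(u_\e)$ on top of the $\simeq 3\sigma$ already forced by the legs. Summing over $m$, and using that $N$ can be taken large compared to any fixed constant once $\e < \e_0(n,\g,W)$, this drives $J_\e(u_\e)$ above $3\sigma + C\e$, contradicting Lemma \ref{lemma:upper bound}.

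The main obstacle is bookkeeping the "extra-ness": I must be careful that the energy contributed by the strips through the $z_m$ is not being double-counted against the energy of the triod's legs already used for the lower bound $3\sigma - C\e^{1/2}$. The clean way is to localize to $\Om_1 \cap \{y \in [\frac14,\frac38]\}$: the $a_1 \leftrightarrow a_3$ transition should not be "needed" there if $\hg$ stayed near the vertical leg $\{x=0,\,y\ge0\}$, since in that region the relevant competing minimum is $a_1$ (on one side) and $a_2$ (on the other), with $\G^3_{\e,\g}$ confined near the lower-left leg; a transition to $a_3$ at height $y \in [\frac14,\frac38]$ means $\hg$ has wandered an $O(1)$ distance, which Proposition \ref{location of diffused interface} forbids for $\e$ small — so in fact one should get the conclusion for \emph{most} $y_0$, not just one. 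I would therefore set up the contradiction so that failure at a positive-measure (indeed, density-one worth of subintervals) set of $y_0$ forces either an $O(1)$ excursion of $\hg$ away from $N_{C\e^{1/4}}(T)$, contradicting Proposition \ref{location of diffused interface}, or an excess energy contradicting Lemma \ref{lemma:upper bound}. Either horn closes the argument; the delicate point is making the "through each $z_m$ there is a vertical segment of cost $\ge \frac{19}{20}\sigma$ lying in $\Om_1$" precise, which uses $y_0 \ge \frac14 \gg C_0\e^{1/4} \ge y^*$ so that the segment stays inside $\Om_1$ and reaches $\pa B_1$ where the boundary datum is $a_1$.
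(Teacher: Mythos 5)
Your high-level plan matches the paper's: argue by contradiction, invoke Proposition~\ref{width of diffused interface} to produce points $\xi$ on $\G^3_{\e,\g}$ within $O(\e)$ of $\hg$ at many heights in $[\frac14,\frac38]$, subdivide into $\sim 1/\e$ subintervals, and convert the abundance of such points into an $O(1)$ energy excess that contradicts Lemma~\ref{lemma:upper bound}. However the two mechanisms you propose for closing the argument both have real gaps.

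First, the alternative horn ``an excursion to $a_3$ at height $y\in[\frac14,\frac38]$ contradicts Proposition~\ref{location of diffused interface}'' is not available. That proposition confines the \emph{diffuse interface} $I_{\e,\g}$ to an $O(\e^{1/4})$-neighborhood of the whole triod $T$, but it places no constraint on \emph{which} level sets $\G^i_{\e,\g}$ appear near the upper spoke: a small ``bubble'' of $u_\e$ values $2\g$-close to $a_3$ sitting inside the $O(\e^{1/4})$-neighborhood of the vertical leg is perfectly consistent with the localization estimate (it is exactly the object whose cost you need to account for), so $\G^3_{\e,\g}$ is not confined near the lower-left leg. Second, the segment-based energy count does not close: the bound $\ge \sigma - C_W\g^2$ from Lemma~\ref{lemma: 1D energy estimate} is a \emph{one-dimensional line integral} along $\tau_m$, and $N\sim 1/\e$ such line integrals do not add up to a two-dimensional excess in $J_\e$ without a fibration argument; and even granting that, one has no way to subtract off what the lower bound $3\sigma - C\e^{1/2}$ of Proposition~\ref{prop epsilon 1/2} has already charged to that region---the potential term $\frac1\e W$ along $\tau_m$ is already fully used in the horizontal-slice lower bound on $\Om_1$. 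You correctly identify the double-counting as the crux but the fix you sketch does not resolve it.

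The paper's resolution is to \emph{stay with horizontal slices} in $\Om_1\cap\{y\in[\frac14,\frac38]\}$, which is exactly the decomposition used in Proposition~\ref{prop epsilon 1/2} and hence naturally additive. The gradient bound $|\na u_\e|\le M/\e$ turns a single point $\xi(h_j)\in\G^3_{\e,\g}$ into a set of heights $K\cap I_j$ of measure $\ge C_1\e$ on which the slice $\{y=h\}$ passes through a point with $|u_\e-a_3|\le 2\g$; on each such slice the boundary data forces the \emph{double} transition $a_1\to a_3\to a_2$, so the horizontal-slice energy is $\ge 2(\sigma-4C_W\g^2)>\frac32\sigma$ rather than $\ge\sigma$. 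Summing over the $N\sim 1/\e$ subintervals gives an excess of $\frac{\sigma}{2}\sum_j\cl(I_j\cap K)\ge C_2\sigma$ with $C_2=C_2(n,\g,W)>0$, and since this excess is an improvement of the same $\Om_1$ horizontal-slice bound used in Proposition~\ref{prop epsilon 1/2}, it adds cleanly to $3\sigma - C\e^{1/2}$ without double counting, yielding a contradiction once $C\e^{1/2}\le\frac{C_2}{2}\sigma$. To repair your argument, replace the transverse segments and the excursion dichotomy with this improved horizontal-slice count.
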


\begin{rmk}
It is not hard to see from the proof below that the constants $\f14,\,\f38$ in the above lemma can be replaced by any $0<r_1<r_2<1$. Here we take these values for simplicity of presentation.
\end{rmk}

\begin{proof}

First assume $\e$ is sufficiently small and fix $n$. We argue by contradiction. If for any $h\in[\f14,\f38]$, there exists $z(h)=(x,y)$ satisfying $\vert y-h\vert\leq nC_0\e$ and $\dist(z(h),\G^2_{\e,\g})> \dist(z(h),\G^3_{\e,\g})$, then by Proposition 5.1
\beqo
\dist(z(h),\G^3_{\e,\g})\leq C_0\e,
\eeqo
which implies
\beq\label{def:xi(h)}
\exists\  \xi(h)\in \G^3_{\e,\g},\quad \dist(\xi(h),z(h))\leq C_0\e. 
\eeq
Since $\vert \na u_\e\vert \leq \f{M}{\e}$, for any $z$ such that $\dist(z,\xi(h)) \leq \f{\g\e}{M}$ it holds that 
\beqo
\vert u_\e(z)-a_3\vert \leq 2\g.
\eeqo

Set $N=[\f{1}{32(n+1)C_0\e}]$ as the greatest integer less than or equal to $\f{1}{32(n+1)C_0\e}$. Here we require $\e\leq \f{1}{128(n+1)C_0}$ such that $N\geq 4$ and $N$ satisfies 
\beq\label{estimate of N}
\f{1}{64(n+1)C_0\e}< N \leq \f{1}{32(n+1)C_0\e}.
\eeq
Then we split the interval $[\f14,\f38]$ into $N$ small intervals, which are denoted by $I_1$, $I_2$,...,$I_N$:
\beqo
\begin{split}
   &I_j=[\f14+4(j-1)(n+1)C_0\e,\ \f14+4j(n+1)C_0\e),\quad 1\leq j\leq N-1,\\
   &I_N:=[\f14+4(N-1)(n+1)C_0\e,\f38].
\end{split}
\eeqo

From the definition we know that $\cl(I_j)=4(n+1)C_0\e$ for $j=1,...,N-1$ and $4(n+1)C_0\e\leq \cl(I_N)<8(n+1)C_0\e$. We define
\beqo
h_j:=\f14+(4j-2)(n+1)C_0\e,\quad j=1,...,N.
\eeqo
Then $h_j$ is in $I_j$ for $j=1,...N$ and $\vert h_N-\f38\vert \geq 2(n+1)C_0\e$. 

For any $j=1,...,N$, by the assumption above there are points $z(h_j)=(z_1,z_2)$, $\xi(h_j)=(\xi_1,\xi_2)$ such that 
\beqo
\vert z_2-h_j\vert \leq nC_0\e,\ \ \vert \xi_2-z_2\vert \leq C_0\e,\ \ z(h_j)\in\hg,\ \ \xi(h_j)\in\G^3_{\e,\g}.
\eeqo
Hence for any $h\in (\xi_2-\f{\g\e}{M},\xi_2+\f{\g\e}{M})$,
\beqo
\exists \hat{z}(h)=(\hat{x}(h),h),\ s.t.\ \ \vert u_\e(\hat{z}(h))-a_3\vert \leq 2\g.
\eeqo

We define the set of points with such property
\beqo
K:=\{h\in[\f14,\f38]: \exists \hat{z}(h)=(\hat{x}(h),h),\ s.t.\ \vert u_\e(\hat{z}(h))-a_3\vert \leq 2\g\}.
\eeqo
The deduction above implies that for any $j=1,...,N$, 
\beq\label{measure of K}
\cl(K\cap I_j)\geq \min \{\f{2\g\e}{M}, 2(n+1)C_0\e\}= C_1\e, 
\eeq
where $C_1$ is a constant depending on $n,\,\g,\,W$. 

For $h\in K$, $u_\e$ equals to $a_1$ at $(-\sqrt{1-h^2},h)$ and $a_2$ at $(\sqrt{1-h^2},h)$ respectively, while it is close to $a_3$ at some point $\hat{z}(h)$ in the middle. We compute (as before we write $u_\e=u$ for simplicity)
\begin{equation*}
    \begin{split}
        &\int_{\{y=h\}} \left(\f{\e}{2}\vert \pa_x u\vert ^2+\f{1}{\e}W(u)\right)\,dx\\
        \geq & \int_{-\sqrt{1-h^2}}^{\hat{x}(h)}\left(\f{\e}{2}\vert \pa_x u\vert ^2+\f{1}{\e}W(u)\right)\,dx+ \int^{\sqrt{1-h^2}}_{\hat{x}(h)}\left(\f{\e}{2}\vert \pa_x u\vert ^2+\f{1}{\e}W(u)\right)\,dx\\
        \geq & 2(\s-4C_W\g^2)>\f32\s,
    \end{split}
\end{equation*}
where in the last inequality we further require that $\g<\sqrt{\f{\s}{20C_W}}$. 

Note that when $\e$ is suitably small, $y_*\leq \f14$. We compute the energy in $\Om_1$,
\begin{equation}\label{proof of claim, energy in Om_1}
    \begin{split}
        &\int_{\Om_1\cap\{y\in[\f14,\f38]\}} \left(\f{\e}{2}\vert \pa_x u\vert ^2+\f{1}{\e}W(u)\right)\,dxdy\\
        \geq &\sum_j\int_{y\in I_j} \int_{-\sqrt{1-y^2}}^{\sqrt{1-y^2}} \left(\f{\e}{2}\vert \pa_x u\vert ^2+\f{1}{\e}W(u)\right)\,dxdy\\
        \geq& \sum_j \bigg{(} \int_{y\in I_j\cap K} \int_{-\sqrt{1-y^2}}^{\sqrt{1-y^2}} \left(\f{\e}{2}\vert \pa_x u\vert ^2+\f{1}{\e}W(u)\right)\,dxdy \\
        &\qquad \quad +\int_{y\in I_j\setminus K} \int_{-\sqrt{1-y^2}}^{\sqrt{1-y^2}} \left(\f{\e}{2}\vert \pa_x u\vert ^2+\f{1}{\e}W(u)\right)\,dxdy \bigg{)}\\
        \geq &\sum_j\left( \cl(I_j\cap K)\cdot \f32\s+\cl(I_j\setminus K)\s \right)\\
        = & \f{\s}{8}+\f{\s}{2}\left(\sum_j \cl(I_j\cap K) \right)\\
        \geq &  \f{\s}{8}+\f{\s}{2}(C_1\e) \f{1}{64(n+1)C_0\e}\\
        \geq & \f{\s}{8}+C_2\s.
    \end{split}
\end{equation}
Here $C_2=C_2(\g,W)$ is a positive constant which only depends on $n,\,\g,\,W$. Adding the energy in $\Om_1$ and $\Om_2$, we have
\begin{equation*}
    \begin{split}
        &\int_{B_1} \left(  \f{\e}{2}\vert \na u\vert ^2+\f{1}{\e} W(u) \right)\ dxdy\\
        =&\int_{\Om_1} \left(  \f{\e}{2}\vert \na u\vert ^2+\f{1}{\e} W(u) \right)\ dxdy+\int_{\Om_2} \left(  \f{\e}{2}\vert \na u\vert ^2+\f{1}{\e} W(u) \right)\ dxdy \\
        \geq & (3+C_2(n,\g,W))\sigma -C\e^{\f12}. 
    \end{split}
\end{equation*}
Here the constant $C$ in the last inequality is the same one as in Proposition 3.2.  Taking $\e$ such that $C\e^{\f12}\leq \f{C_2}{2}\s$ yields a contradiction with the upper bound, which proves the lemma.

\end{proof}

\begin{lemma}\label{lemma: three point}
Fix $\g< \min\{\g_0,\min\limits_{i,j\in\{1,2,3\}} \f12 \vert a_i-a_j\vert, \sqrt{\f{\sigma}{20C_W}}\}$. There exists a small constant $\e(\g,W)$ such that for any $\e<\e(\g,W)$, there are three points $P_\e,Q_\e,R_\e\in \overline{B}_{\f12}$ that satisfy
\begin{align*}
&\vert u_\e(P_\e)-a_1\vert = \g,\ \vert u_\e(Q_\e)-a_2\vert =\g,\  \vert u_\e(R_\e)-a_3\vert = \g,\\
& \max\{\dist(P_\e,Q_\e), \dist(P_\e,R_\e)\}\leq C_0\e.
\end{align*}
\end{lemma}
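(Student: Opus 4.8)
The plan is to produce all three points on the curve $\hg$ introduced in \eqref{def: hg}. Recall that $\hg$ is a connected arc on which $|u_\e-a_1|=\g$ (so $\hg\subset\G^1_{\e,\g}$), joining the two points $A,B$ of $\pa\hat{D}^1_{\e,\g}\cap\pa B_1$, with $A$ within $O(\e)$ of $(0,1)$ and $B$ within $O(\e)$ of $(-\tfrac{\sqrt3}{2},-\tfrac12)$, and that $\hg\subset N_{C\e^{1/4}}(T)$ by Proposition \ref{location of diffused interface}. I will look for a point $P_\e\in\hg$ that is simultaneously within $C_0\e$ of $\G^2_{\e,\g}$ and of $\G^3_{\e,\g}$; taking $Q_\e\in\G^2_{\e,\g}$ and $R_\e\in\G^3_{\e,\g}$ to be nearest points to $P_\e$ then gives $|u_\e(P_\e)-a_1|=|u_\e(Q_\e)-a_2|=|u_\e(R_\e)-a_3|=\g$ together with $\max\{\dist(P_\e,Q_\e),\dist(P_\e,R_\e)\}\le C_0\e$, which is the assertion.

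To produce such a $P_\e$, I would consider the continuous function $\psi(z):=\dist(z,\G^2_{\e,\g})-\dist(z,\G^3_{\e,\g})$ on the connected set $\hg$. Near $A$ the boundary datum $g_\e$ interpolates between $a_2$ and $a_1$ over an interval of size $O(\e)$, so $\dist(A,\G^2_{\e,\g})=O(\e)$; on the other hand $\G^3_{\e,\g}$ is confined to an $O(\e^{1/4})$ neighborhood of the two edges of $T$ bounding $D_3$ and of the corresponding arc of $\pa B_1$, all at a fixed positive distance from $(0,1)$, so $\dist(A,\G^3_{\e,\g})\ge c>0$; hence $\psi(A)<0$, and symmetrically $\psi(B)>0$. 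By the intermediate value theorem there is $P_\e\in\hg$ with $\psi(P_\e)=0$, i.e. $\dist(P_\e,\G^2_{\e,\g})=\dist(P_\e,\G^3_{\e,\g})$. Since $P_\e\in\G^1_{\e,\g}$, Proposition \ref{width of diffused interface} forces $\min\{\dist(P_\e,\G^2_{\e,\g}),\dist(P_\e,\G^3_{\e,\g})\}\le C_0\e$, and combined with the equality this gives $\dist(P_\e,\G^2_{\e,\g}),\dist(P_\e,\G^3_{\e,\g})\le C_0\e$.

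The delicate point, and the step I expect to be the main obstacle, is to ensure that such a zero of $\psi$ can be chosen inside $\overline{B}_{1/2}$; this is where Lemma \ref{claim} is used. Because $\hat{D}^1_{\e,\g}$ is simply connected and $O(\e^{1/4})$-close to the sector $D_1$, one first argues that $\hg$ runs, up to $O(\e^{1/4})$ errors, from $A$ down the edge of $T$ separating $D_1$ from $D_2$, through the junction center, and out the edge separating $D_1$ from $D_3$ to $B$. By Lemma \ref{claim} (using its Remark that $[\tfrac14,\tfrac38]$ may be replaced by any subinterval of $(0,1)$) there is a point of $\hg$ on the first edge, at distance in $[\tfrac14,\tfrac38]$ from the origin, where $\psi\le0$; by the same slicing argument applied after rotating the configuration by $\tfrac{2\pi}{3}$ (with the matching relabelling of $g_\e$) there is a point of $\hg$ on the second edge, at distance in $[\tfrac14,\tfrac38]$, where $\psi\ge0$. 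Taking these as, respectively, the last and the first visit of $\hg$ to distance $\le\tfrac38$ along the two edges, the subarc of $\hg$ between them lies in $\overline{B}_{1/2}$ for $\e$ small, $\psi$ changes sign on it, and the resulting zero is our $P_\e\in\overline{B}_{1/2}$; consequently $Q_\e,R_\e\in\overline{B}_{1/2}$ as well. Thus the whole difficulty is to pin down --- within an $O(\e^{1/4})$ ball about the center --- the place where $\hg$ stops hugging the $D_1$--$D_2$ edge of the triod, where it is nearer $\G^2_{\e,\g}$, and starts hugging the $D_1$--$D_3$ edge, where it is nearer $\G^3_{\e,\g}$; this rests on the $O(\e^{1/4})$-localization of the diffuse interface near $T$ (Proposition \ref{location of diffused interface}), the $O(\e)$-width estimate (Proposition \ref{width of diffused interface}), the simple connectedness of $\hat{D}^1_{\e,\g}$, and the quantitative "which side" information supplied by Lemma \ref{claim}.
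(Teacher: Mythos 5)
Your proposal is correct and matches the paper's argument in all essentials: you define $\psi(z)=\dist(z,\G^2_{\e,\g})-\dist(z,\G^3_{\e,\g})$ on the connected arc $\hg$, use Lemma \ref{claim} (applied once per leg) to locate a sign of $\psi$ at controlled distance from the center, pick the ``last'' intersection on the $D_1$--$D_2$ side and the ``first'' on the $D_1$--$D_3$ side so the intervening subarc stays inside $\overline{B}_{1/2}\cap N_{C\e^{1/4}}(T)$, and then apply the intermediate value theorem plus Proposition \ref{width of diffused interface} to get $P_\e$ with both distances at most $C_0\e$. This is precisely the paper's construction of $\hat z_1$, $\hat z_2$, the path $\hat f$, and the function $\al(t)$; the only cosmetic difference is that you also note $\psi(A)<0,\psi(B)>0$ at the outset, which the paper bypasses by working directly with the two interior points supplied by Lemma \ref{claim}.
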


\begin{proof}
\begin{figure}[ht]
\centering
\begin{tikzpicture}[thick]
\draw [black!20!green,domain=0:85] plot ({3.5*cos(\x)}, {3.5*sin(\x)});
\draw [black!20!green,domain=335:360] plot ({3.5*cos(\x)}, {3.5*sin(\x)});
\draw [dotted,domain=85:95] plot ({3.5*cos(\x)}, {3.5*sin(\x)});
\draw [red,domain=95:205] plot ({3.5*cos(\x)}, {3.5*sin(\x)});
\draw [dotted,domain=205:215] plot ({3.5*cos(\x)}, {3.5*sin(\x)});
\draw [blue,domain=215:325] plot ({3.5*cos(\x)}, {3.5*sin(\x)});
\draw [dotted,domain=325:335] plot ({3.5*cos(\x)}, {3.5*sin(\x)});
\draw[red] (-0.6, {sqrt(3.5*3.5-0.6*0.6)})--(-0.6,{0.6/sqrt(3)})--({-0.3-sqrt(3)*sqrt(11.89)/2},{-sqrt(11.89)/2+0.3*sqrt(3)});
\draw[rotate around={120:(0,0)},color=blue] (-0.6, {sqrt(3.5*3.5-0.6*0.6)})--(-0.6,{0.6/sqrt(3)})--({-0.3-sqrt(3)*sqrt(11.89)/2},{-sqrt(11.89)/2+0.3*sqrt(3)});
\draw[rotate around={240:(0,0)},color=green] (-0.6, {sqrt(3.5*3.5-0.6*0.6)})--(-0.6,{0.6/sqrt(3)})--({-0.3-sqrt(3)*sqrt(11.89)/2},{-sqrt(11.89)/2+0.3*sqrt(3)});
\fill[color=black!10] ({-0.3-sqrt(3)*sqrt(11.89)/2},{-sqrt(11.89)/2+0.3*sqrt(3)})--(-0.6,{0.6/sqrt(3)}) --(-0.6, {sqrt(3.5*3.5-0.6*0.6)}) arc (100:90:3.5) --(0,3.5)--(0,0)--({-1.75*sqrt(3)},-1.75);
\fill[rotate around={120:(0,0)}, color=black!10] ({-0.3-sqrt(3)*sqrt(11.89)/2},{-sqrt(11.89)/2+0.3*sqrt(3)})--(-0.6,{0.6/sqrt(3)}) --(-0.6, {sqrt(3.5*3.5-0.6*0.6)}) arc (100:90:3.5) --(0,3.5)--(0,0)--({-1.75*sqrt(3)},-1.75);
\fill[rotate around={240:(0,0)}, color=black!10] ({-0.3-sqrt(3)*sqrt(11.89)/2},{-sqrt(11.89)/2+0.3*sqrt(3)})--(-0.6,{0.6/sqrt(3)}) --(-0.6, {sqrt(3.5*3.5-0.6*0.6)}) arc (100:90:3.5) --(0,3.5)--(0,0)--({-1.75*sqrt(3)},-1.75);

\draw[color=red, rounded corners=6pt] (-0.3,{sqrt(12.16)}) node[above, color=black]{A}--(-0.5,2.5)--(-0.1,1.2)--(-0.3,0)--(-1.45,-0.4)--(-2.35, -1.2)--(-3.18, -1.47) node[below, color=black]{B};
\draw[->](-0.25,0.6)--(-0.8,0.6) node[left]{$\hg$};
\filldraw(-0.15,1.3) circle (1.5pt) node [above right]{$\hat{z}_1$ is closer to $\G^2_{\e,\g}$};
\filldraw(-1.41,-0.4) circle (1.5pt) node [ right]{$\hat{z}_2$ is closer to $\G^3_{\e,\g}$};
\draw[thin,->,color=black!70] (0.5,1)--(3.9,1);
\draw[red](4.2,2.2)--(4.2,-0.2);
\draw[dashed] (4.2,-0.2)--(8,-0.2);
\draw[green] (8,-0.2)--(8,2.2) node [right,color=black] at (8,1) {$y\in I_j$};
\draw[dashed] (4.2,2.2)--(8,2.2);
\fill[color=black!10] (4.2,2.2)--(4.2,-0.2)--(8,-0.2)--(8,2.2)--(4.2,2.2);
\draw[color=red] (4.8,-0.2)--(5.2,2.2);
\filldraw(5,1) circle (1.5pt) node [above left] {$z(h_j)$};
\fill[color=blue!50] (5.8,0.5) circle (0.5);
\filldraw(5.8,0.5) circle (1.5pt) node [below right] {$\xi(h_j)$};

\end{tikzpicture}
\caption{$\hat{z}_1,\hat{z}_2\in\hg$ and satisfy $\dist(\hat{z}_1,\G^3_{\e,\g})\geq \dist(\hat{z}_1,\G^2_{\e,\g})$, $\dist(\hat{z}_2,\G^2_{\e,\g})\geq \dist(\hat{z}_2,\G^3_{\e,\g})$ respectively. The figure on the right shows the enlargement of one portion of the transition layer between $a_1,a_2$. In an $O(\e)$ neighborhood of $\xi(h_j)$ (the blue region), $\vert u(z)-a_3\vert \leq 2\g$.}
\label{pic3}
\end{figure}

\vspace{5pt}
From Lemma \ref{claim} (taking $n=1$) and Proposition \ref{width of diffused interface} there is at least one point $\hat{z}_1=(x_1,y_1)\in \hg$ which satisfies the following
\begin{enumerate}
    \item $y_1\in [\f14,\f38]$,
    \item $\dist(\hat{z}_1,\G^2_{\e,\g})\leq C_0\e$, $\ \dist(\hat{z}_1,\G^3_{\e,\g})\geq  \dist(\hat{z}_1,\G^2_{\e,\g})$,
    \item $\hg(\hat{z}_1,B)\cap \{(x,y)\in B_1:y>y_1\}=\emptyset$, here $\hg(\hat{z}_1,B)$ denotes the part of $\hg$ that connects $\hat{z}_1$ and $B$. 
\end{enumerate}
Here we have the third property because $\hat{z}_1$ is chosen from the intersections of $\hg$ and the horizontal line $\{y=y_1\}$ where $y_1$ satisfies Lemma \ref{claim}. Therefore we can choose the ``last" intersection of $\hg$ and $\{y=y_1\}$ such that the third property holds. 

Similarly we can apply the same argument to the transition layer between $a_1$ and $a_3$ and find one point $\hat{z}_2=(x_2,y_2)\in\hg$ satisfying 
\begin{enumerate}
    \item $\sqrt{3}x_2+y_2\in [-\f34,-\f12]$,
    \item $\dist(\hat{z}_2,\G^3_{\e,\g})\leq C_0\e$, $\ \dist(\hat{z}_2,\G^2_{\e,\g})\geq  \dist(\hat{z}_2,\G^3_{\e,\g})$,
    \item $\hg(A,\hat{z}_2)\cap \{(x,y)\in B_1: \sqrt{3}x+y<\sqrt{3}x_2+y_2\}=\emptyset$. 
\end{enumerate}

From the path-connectedness of $\hg$ it follows that there is a continuous map $\hat{f}:[0,1]\ri\hg$ that satisfies $\hat{f}(0)=\hat{z}_1$, $\hat{f}(1)=\hat{z}_2$. By the property (3) of $\hat{z}_1$ and $\hat{z}_2$ and simple geometry arguments, we know that 
\beq\label{the chosen part is away from boundary}
\hg(\hat{z}_1,\hat{z}_2) \subset \overline{B}_{\f12}\cap N_{C\e^{\f14}}(T), \ \ \text{when }C\e^{\f14}<\f18. 
\eeq
Define the function 
\beqo
\al:[0,1]\ri\BR,\quad \al(t)=\dist(\hat{f}(t),\G^2_{\e,\g})-\dist(\hat{f}(t),\G^3_{\e,\g}).
\eeqo
Due to the continuity of $\hat{f}$ and the distance function to a closed set, $\al(t)$ is also continuous. The properties of $\hat{z_1}$ and $\hat{z}_2$ mean that $\al(0)\leq 0$ and $\al(1)\geq 0$. Then the intermediate value theorem immediately implies there exists $t_0\in [0,1]$ such that $\al(t_0)=0$. Set $P_\e=\hat{f}(t_0)$. From Proposition \ref{width of diffused interface} it follows that  
\beqo
\dist(P_\e,\G^2_{\e,\g})=\dist(P_\e,\G^3_{\e,\g})\leq C_0\e.
\eeqo
Take $Q_\e\in \G_{\e,\g}^2$ and $R_\e\in\G^3_{\e,\g}$ such that 
\begin{equation*}
\dist(Q_\e,P_\e)=\dist(R_\e,P_\e)=\dist(P_\e,\G^2_{\e,\g})=\dist(P_\e,\G^3_{\e,\g})\leq C_0\e.
\end{equation*}
Then it is straightforward to verify that $P_\e$, $Q_\e$ and $R_\e$ satisfy all the properties in the statement of Lemma \ref{lemma: three point}. The proof is complete. 
\end{proof}

Next we present the key lemma for the main Theorem \ref{main theorem}, which has a more discrete nature.

\begin{lemma}\label{New ver of lemma 6.2}
Fix $\g<\min\{\g_0,\min\limits_{i,j}\vert a_i-a_j \vert, \sqrt{\f{\s}{20C_W}}\}$. For any $k\in \mathbb{N}^+$, there exists $\e_0(k,\g,W)$ such that for any $\e<\e_0$, there is a point $P_\e\in\hg\cap B_{\f12}$ that satisfies the following,
\begin{align*}
    &\exists\ \{Q^j_k\}_{j=1}^{2^k}\subset \G^2_{\e,\g},\ \{R^j_k\}_{j=1}^{2^k}\subset \G^3_{\e,\g} \text{ such that }\\
    & \dist(Q^j_k,P_\e)\leq (32j+1)C_0\e,\quad \dist(R^j_k,P_\e)\leq (32j+1)C_0\e,\\
    &\text{For any two points }A_1,\,A_2 \in \{Q^j_k\}_{j=1}^{2^k}\cup \{R^j_k\}_{j=1}^{2^k}, \ \ \dist(A_1,A_2)\geq 6C_0\e,\\
    &\text{For any point }A\in \{Q^j_k\}_{j=1}^{2^k}\cup \{R^j_k\}_{j=1}^{2^k},\ \ \dist(A,\hg)\leq C_0\e.
\end{align*}
\end{lemma}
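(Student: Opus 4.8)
The plan is to prove the lemma by induction on $k$, each inductive step doubling the number of selected points, so that $k$ doublings yield $2^k$ points of each of the two types. The mechanism that makes selection possible is the width estimate: since $\hg$ lies in a connected component of $\G^1_{\e,\g}$, Proposition \ref{width of diffused interface} (with $i=1$) gives $\dist(z,\G^2_{\e,\g}\cup\G^3_{\e,\g})\le C_0\e$ for every $z\in\hg$, so along any sub-arc of $\hg$ on which $\G^2_{\e,\g}$ is the closer of the two level sets, each point of that sub-arc lies within $C_0\e$ of $\G^2_{\e,\g}$; one may therefore ``harvest'' points of $\G^2_{\e,\g}$ that are automatically within $C_0\e$ of $\hg$, and symmetrically for $\G^3_{\e,\g}$. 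For the base case $k=1$, Lemma \ref{lemma: three point} supplies $P_\e\in\hg\cap\overline B_{1/2}$ with $\dist(P_\e,\G^2_{\e,\g})=\dist(P_\e,\G^3_{\e,\g})\le C_0\e$; following $\hg$ out from $P_\e$, its two branches eventually run along the two legs of $T$ adjacent to $D_1$, along which $\G^2_{\e,\g}$, respectively $\G^3_{\e,\g}$, is the close partner, and one harvests $Q^1_1,Q^2_1\in\G^2_{\e,\g}$ and $R^1_1,R^2_1\in\G^3_{\e,\g}$ with the distance bounds $\dist(\cdot,P_\e)\le(32j+1)C_0\e$, the within-type separation $\ge6C_0\e$ coming from the spacing along the branch. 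The one delicate separation is $\dist(Q^i_1,R^j_1)\ge6C_0\e$: it cannot be read off from closeness to $P_\e$, since the $\G^2$- and $\G^3$-points attached to $P_\e$ itself sit within $2C_0\e$ of each other, and must be arranged by pushing the harvested points far enough along the two branches, where they have diverged, with $\e<\e_0(1,\g,W)$ giving the room.

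For the inductive step I assume a valid configuration for $k$ and shrink $\e_0$ so that $(32\cdot2^{k+1}+1)C_0\e<\e^{1/4}$, which keeps all points under consideration near the diffuse center, where the only available geometric input is the energy inequality; here the argument becomes ``discrete''. One must show that, near $P_\e$, the parts of $\hg$ of each type are long enough to harvest $2^{k+1}$ well-separated points of each of $\G^2_{\e,\g}$ and $\G^3_{\e,\g}$. I would control this by slicing, as in the proof of Lemma \ref{claim}: wherever $\hg$ is of type 3 near $P_\e$ (meaning $\G^3_{\e,\g}$ is the closer level set there) the phase $a_3$ intrudes between $a_1$ and $a_2$ on the corresponding horizontal slice, so by Lemma \ref{lemma: 1D energy estimate} that slice carries action $\ge2\s-O(\g^2)$; if the type-3 part of $\hg$ near $P_\e$ were too long, summing this over the affected slices would force $\int_{B_1}(\tfrac{\e}{2}|\nabla u_\e|^2+\tfrac1\e W(u_\e))$ above $3\s+C\e$, contradicting \eqref{upper bound}, and a symmetric constraint governs the type-2 part. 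Within the windows these estimates leave open one harvests the new points, reorders all selected points by distance from $P_\e$, and prunes any point within $6C_0\e$ of a previously chosen one; a careful count shows the pruning still leaves $2^{k+1}$ of each type with the stated distance bounds, at the cost of the shrinkage of $\e_0$.

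The step I expect to be the main obstacle is precisely this inductive doubling: extracting, inside the $O(\e^{1/4})$ central region and using only the energy bounds, enough of $\G^2_{\e,\g}$ and $\G^3_{\e,\g}$ near both $\hg$ and $P_\e$ to double the count, while \emph{simultaneously} preserving the mutual $6C_0\e$-separation of the $\G^2$-points and the $\G^3$-points together with the linear-in-$j$ distance bound. This is the ``elaborate'' combinatorics referred to in the introduction, and it is what forces $\e_0$ to depend on $k$; it is carried out by balancing the global upper bound \eqref{upper bound} against the slice-wise lower bounds of Lemmas \ref{lemma: 1D energy estimate} and \ref{claim} and the width and density estimates behind Proposition \ref{width of diffused interface}.
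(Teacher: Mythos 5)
Your proposal takes a genuinely different route from the paper, and the route you chose has a gap at exactly the place you yourself flag as ``the main obstacle.'' The paper does \emph{not} prove the lemma by upward induction on $k$ around a fixed centre. Instead it does a one-shot discrete construction: it parameterizes $\hg$ by arclength and marks off points $z_1,\dots,z_N$ at spacing exactly $8C_0\e$; classifies each $z_i$ into $\mathbb{Z}_2$ or $\mathbb{Z}_3$ according to whether $\G^2_{\e,\g}$ or $\G^3_{\e,\g}$ is closer; invokes Lemma~\ref{claim} with the \emph{large} parameter $n=2^{k+5}$ to guarantee a run of $2^{k+1}$ consecutive $\mathbb{Z}_2$-points (near the $a_1$--$a_2$ leg) and, symmetrically, a run of $2^{k+1}$ consecutive $\mathbb{Z}_3$-points; and then locates $P_\e$ \emph{a posteriori} by a downward dyadic binary search. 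Concretely, the counting function $N(l)=|\{z_l,\dots,z_{l+2^{k+1}-1}\}\cap\mathbb{Z}_2|$ moves by at most $1$ per step and runs from $2^{k+1}$ to $0$, so a discrete intermediate-value argument yields nested windows of sizes $2^{k+1},2^k,\dots,2$ each containing exactly half $\mathbb{Z}_2$- and half $\mathbb{Z}_3$-points, and $P_\e=z_{l_k}$ is the terminal point of that search. Because the windows are nested, every dyadic ball $\overline{B}(P_\e,2^{j+3}C_0\e)$ automatically contains $\ge 2^{j-1}$ points of each type, and snapping those to $\G^2_{\e,\g}$, $\G^3_{\e,\g}$ gives the $Q^j_k$, $R^j_k$ with the $(32j+1)C_0\e$ bound and the $6C_0\e$ separation (the latter from the fixed $8C_0\e$ grid spacing plus the $C_0\e$ snap).

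The gap in your inductive step is structural, not merely technical. To pass from $2^k$ to $2^{k+1}$ while keeping $P_\e$ (or even a nearby $P'_\e$) fixed, you need a \emph{lower} bound on how much of $\hg$ within $O(2^kC_0\e)$ of $P_\e$ is of type~3, in order to harvest the extra $2^k$ points in $\G^3_{\e,\g}$ with $6C_0\e$ separation. But the slicing estimate you invoke (as in Lemma~\ref{claim}) produces an \emph{upper} bound on the type-3 length --- ``if the type-3 part were too long, the energy would exceed $3\s+C\e$'' --- and an upper bound alone cannot manufacture the required points. The paper avoids this asymmetry precisely by postponing the choice of $P_\e$: the binary search selects a centre around which, at \emph{every} dyadic scale up to $2^{k+1}$, the two types are balanced, so no lower bound on type-3 content near a prescribed point is ever needed. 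Your proposed ``reorder and prune'' step is also unestablished: after adding new points and discarding those within $6C_0\e$ of an old one, you must still meet the $(32j+1)C_0\e$ constraint for \emph{all} $j$ up to $2^{k+1}$ simultaneously, and nothing in the proposal controls how much pruning destroys. Finally, note that $\e_0$ depending on $k$ in the paper comes from the single application of Lemma~\ref{claim} with $n=2^{k+5}$, not from accumulating losses over $k$ inductive steps; trying to make the accumulation converge is an extra difficulty your route would face and the paper's does not.
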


\begin{proof}
Assume $\g$ and $\e$ are suitably small. The specific conditions on the smallness of $\g,\,\e$ will be provided along the proof. We focus on the curve $\hg\subset \G^1_{\e,\g}$, which is a $C^1$ curve that intersects with the boundary $\pa B_1$ at two points $A,\, B$.

First of all we parameterize $\hg$ by arclength: 
\beqo
\hg=\{\eta(t):t\in[0,L],\, \eta(0)=A,\, \eta(L)=B,\, \vert\eta'(t)\vert=1 \text{ a.e. }\},
\eeqo
where $L=\mathcal{H}^1(\hg)$ is the length of the curve $\hg$. 

\begin{step}
``Discretize the curve $\hg$". We can find a sequence of points $z_1,...,z_N$ by the following rule:
\begin{enumerate}
    \item $z_1=A=\eta(0)$, $t_1=0$;
    \item If we already define $z_i=\eta(t_i)$, then take 
    \beq\label{find t_{i+1}}
    t_{i+1}=\sup\limits_{t\geq t_i} \{t: \dist(\eta(t),z_i)\leq 8C_0\e\},\quad z_{i+1}=\eta(t_{i+1});
    \eeq
    \item When $\dist(B,z_N)\leq 8C_0\e$ for some $N$, the process stops.
\end{enumerate}
By the continuity of $\eta(t)$, we have immediately 
\beq\label{distance of zi zi+1}
\dist(z_{i+1},z_i)=8C_0\e.
\eeq
Since $\vert \eta'(t) \vert=1$ a.e., it holds 
\beqo
t_{i+1}-t_i\geq 8C_0\e.
\eeqo
Hence the process will stop in finite steps and 
\beqo
N\leq \f{L}{8C_0\e}+1.
\eeqo
Moreover for any $1\leq i\neq j\leq N$,
\beq\label{distance of zi zj}
\dist(z_i,z_j)\geq 8C_0\e
\eeq
Otherwise, if there exists $i>j$ such that $\dist(z_i,z_j)<8C_0\e$, by \eqref{distance of zi zi+1} we know $i\geq j+2$. Then  $\dist(\eta(t_i),z_j)<8C_0\e$ and $t_i>t_{j+1}$ together yield a contradiction with the choice of $t_{j+1}$.

Next we classify $\{z_i\}_{i=1}^N$ into two subsets according to their relative distances to $\G^2_{\e,\g}$ and $\G^3_{\e,\g}$.
\begin{align*}
    &\mathbb{Z}_2:=\{z_i:\dist(z_i,\G^2_{\e,\g})\leq \dist(z_i,\G^3_{\e,\g})\},\\
    &\mathbb{Z}_3:=\{z_i\}_{i=1}^N\setminus \mathbb{Z}_2.
\end{align*}
\end{step}

\begin{step}
Fix $k\in \mathbb{N}^+$. Taking $n=2^{k+5}$ in  Lemma \ref{claim}, we have that when $\e<\e(n,\g,W)$, there exists a $\tilde{y}\in[\f14,\f38]$ such that for any $z_i=(x_i, y_i)$ satisfying $\vert y_i-\tilde{y}\vert\leq  2^{k+5}C_0\e$, $z_i\in \mathbb{Z}_2$. Define
\beqo
t_{i_0}:=\sup\{t_i:z_i=\eta(t_i)=(x_i,y_i),\ y_i\geq \tilde{y}\}.
\eeqo
This definition of $t_{i_0}$ and \eqref{distance of zi zi+1} implies that 
\beqo
\begin{split}
&\vert y_{i_0}-\tilde{y}\vert\leq 8C_0\e,\\
& y_i<\tilde{y},\ \forall i>i_0.
\end{split}
\eeqo
In particular we have for all $i_0\leq i\leq i_0+2^{k+1}-1$ it holds that 
\beqo
\vert y_i-\tilde{y}\vert\leq 2^{k+4}C_0\e, \quad y_i\in[\f18,\f12],\quad z_i\in \mathbb{Z}_2.
\eeqo
Here we only require $2^{k+4}C_0\e\leq \f18$.

Following the same argument, we can also get a $j_0$ that satisfies the following,
\begin{align}
    \nonumber &j_0>i_0;\\
    \label{location of zj between i0 and j0}&\forall\  i_0< j\leq j_0, \quad \sqrt{3}x_j+y_j> -\f34,\ y_j<\f38;\\
    \nonumber &\forall\ j_0-2^{k+1}+1\leq j\leq j_0,\quad z_j\in \mathbb{Z}_3,\ \sqrt{3}x_j+y_j\in(-1,-\f14).
\end{align}

The locations of $\{z_i\}_{i=i_0}^{i_0+2^{k+1}-1}$ and $\{z_j\}_{j=j_0-2^{k+1}+1}^{j_0}$ imply that they are disjoint subsets. Since $j_0>i_0$, we further infer that 
\beqo
j_0-2^{k+1}+1>i_0+2^{k+1}-1.
\eeqo
Furthermore, \eqref{location of zj between i0 and j0} implies 
\beq\label{location of zj between i0 and j0 2}
z_i\in B_{\f12},\quad \forall\  i_0\leq j\leq j_0.
\eeq
\end{step}

\begin{step}
For $i_0\leq l\leq j_0-2^{k+1}+1$, define
\beqo
N(l):=\vert \{ z_l,z_{l+1},...,z_{l+2^{k+1}-1} \}\cap\mathbb{Z}_2 \vert
\eeqo
as the number of points in $\{z_i\}_{i=l}^{l+2^{k+1}-1}\cap \mathbb{Z}_2$. Then 
\beqo
N(i_0)=2^{k+1},\quad N(j_0-2^{k+1}+1)=0.
\eeqo
Also each time $l$ is shifted by $1$, $N(l)$ is changed at most by $1$, i.e.
\beqo
\vert N(l+1)-N(l) \vert \leq 1.
\eeqo
Hence by continuity there exists a $l_0$ such that 
\beqo
N(l_0)=2^k, \quad i_0<l_0<j_0-2^{k+1}+1.
\eeqo

Next we can find $l_1\in \{l_0,...,l_0^{2^k}\}$ such that $\vert \{z_i\}_{i=l_1}^{l_1+2^k-1}\cap\mathbb{Z}_2   \vert=2^{k-1} $. Indeed, if $\vert \{z_i\}_{i=l_0}^{l_0+2^k-1}\cap\mathbb{Z}_2   \vert=2^{k-1}$, then we simply take $l_1=l_0$. Otherwise without loss of generality we assume $\vert \{z_i\}_{i=l_0}^{l_0+2^k-1}\cap\mathbb{Z}_2   \vert>2^{k-1}$, then $\vert \{z_i\}_{i=l_0+2^k}^{l_0+2^{k+1}-1}\cap\mathbb{Z}_2   \vert<2^{k-1}$. By continuity we can find $l_1\in \{l_0+1,...,l_0+2^k-1\}$ such that 
\beqo
\vert \{z_i\}_{i=l_1}^{l_1+2^k-1}\cap\mathbb{Z}_2   \vert=2^{k-1}.
\eeqo

Proceeding in the same way we finally get $l_0,l_1,...,l_k$ that satisfy
\begin{enumerate}
    \item $l_j\leq l_{j+1}\leq l_{j}+2^{k-j}-1$, for $j=0,...,k-1$.
    \item $\vert \{z_i\}_{i=l_j}^{l_j+2^{k+1-j}-1}\cap \mathbb{Z}_2 \vert=\vert \{z_i\}_{i=l_j}^{l_j+2^{k+1-j}-1}\cap \mathbb{Z}_3 \vert=2^{k-j}$ for $j=0,...,k$.
    \item $\{z_i\}_{i=l_j}^{l_j+2^{k+1-j}-1}\subset \overline{B}(z_{l_k}, 2^{k+1-j}\cdot 8C_0\e)$ for $j=0,...,k$.
\end{enumerate}

Take $P_\e=z_{l_k}$, then from the above construction we have that
\beqo
\begin{split}
 &\vert\overline{B}(P_\e,2^{j+3}C_0\e)\cap \mathbb{Z}_2\vert\geq 2^{j-1},\\
 &\vert\overline{B}(P_\e,2^{j+3}C_0\e)\cap \mathbb{Z}_3\vert\geq 2^{j-1}.  
\end{split}
\eeqo
Recall the definition of $\mathbb{Z}_2$, if $z_i\in \mathbb{Z}_2$, then there exists a $Q_i\in \G^2_{\e,\g}$ such that $\dist(z_i,Q_i)\leq C_0\e$. For $z_i\neq z_j\in \mathbb{Z}_2$, let $Q_i,Q_j$ be the corresponding points on $\G^2_{\e,\g}$. Then $Q_i,Q_j$ satisfy $\dist(Q_i,Q_j)\geq 6C_0\e$.

Therefore, we have obtained $\{Q_1,...,Q_{2^k}\}\subset \G^2_{\e,\g}$, $\{R_1,...,R_{2^k}\}\subset \G^3_{\e,\g}$
satisfying
\beqo
\begin{split}
&\{Q_1,...,Q_{2^j},R_1,...,R_{2^j}\}\subset \overline{B}(P_\e,(2^{j+4}+1)C_0\e),\quad \forall\  0\leq j\leq k,\\
&\forall\ A_1,\, A_2\subset \{Q_1,...,Q_{2^j},R_1,...,R_{2^j}\}, \ \dist(A_1,A_2)\geq 6C_0\e.
\end{split}
\eeqo

Let $Q^j_{k}=Q_j$, $R^i_k=R_j$. The proof of the lemma is complete.
\end{step}
\end{proof}

\begin{proof}[Conclusion of the proof of Theorem \ref{main theorem}]

By Lemma \ref{New ver of lemma 6.2}, we can find a sequence $\e_k\ri 0$ and a sequence of points $P_{\e_k}=(x_k,y_k)\in\hat{\G}^1_{\e_k,\g}\cap B_{\f12}$ such that there exist $\{Q^j_{\e_k}\}_{j=1}^{2^k}\cap \{R^j_{\e_k}\}_{j=1}^{2^k}$ satisfying that the pairwise distance is larger than $6C_0\e_k$ and 
\begin{equation*}
    \begin{split}
       &Q_{\e_k}^j\in \G^2_{\e_k,\g},\  R_{\e_k}^j\in \G^3_{\e_k,\g},\\
       &\dist(Q_{\e_k}^j,\G^1_{\e_k,\g})\leq C_0\e_k,\ \dist(R_{\e_k}^j,\G^1_{\e_k,\g})\leq C_0\e_k,\\
       & \dist(Q^j_{\e_k},P_{\e_k})\leq (32j+1)C_0\e_k,\quad \dist(R^j_{\e_k},P_{\e_k})\leq (32j+1)C_0\e_k.
    \end{split}
\end{equation*}
Moreover, since $\hat{\G}_{\e_k,\g}^1$ is connected, for any $r\in(0,\dist(P_{\e_k},\pa B_1) ]$ we have
\beqo
\hat{\G}_{\e_k,\g}^1\cap \pa B(P_{\e_k},r)\neq \emptyset.
\eeqo

We rescale $u_{\e_k}$ at the point $P_{\e_k}$:
\beq\label{rescale u_e}
    U_{k}(X,Y):=u_{\e_k}(x_k+\e_k X,y_k+\e_k Y), \quad (X,Y)\in B_{r_{\e_k}}(0),
\eeq
where $r_{\e_k}:=\f{\dist(P_{\e_k},\pa B_1)}{\e_k}\geq \f{1}{2\e_k}$.

The estimates
\beqo
\vert U_k\vert\leq M,\quad  \vert \na U_k\vert \leq M
\eeqo
give the following convergence (up to some subsequence)
\beqo
U_k\ri u \ \text{ uniformly on any compact set }K\subset \BR^2. 
\eeqo
$u$ solves the equation \eqref{main equation} and inherits the minimality from $U_k$. Finally by a diagonal argument we can get a further subsequence, which is still denoted by $\e_k$, such that for all $j\in\mathbb{N}^+$ the following limits hold true: 
\beqo
Q_j=\lim\limits_{k\ri \infty}Q^j_{\e_k},\ \  
R_j=\lim\limits_{k\ri \infty}R^j_{\e_k}.
\eeqo
It is easy to verify that $u$ and $\{Q_j, R_j\}_{j=1}^\infty$ satisfy Property (a), (b), (c) in Theorem \ref{main theorem}. The rest of this section is devoted to prove Property (d).  

Let
\beqo
u_t(z):=u(tz),\quad z\in \BR^2,\;t>0.
\eeqo
Note that this is the blow-down scaling for large $t$.

\textbf{Claim. } For any $t_k\ri\infty$, there is a subsequence (still denoted by $t_k$) such that  
\beq\label{L1 loc converge}
u_{t_k}\xrightarrow{L^1_{loc}(\BR^2)} u_0=\sum_{j=1}^{\bar{N}}\bar{a}_j\chi_{D_j},
\eeq
where $\bar{a}_j\in \{W=0\}=\{a_1,a_2,a_3\}$, $1\leq \bar{N}\leq 3$, $\mathcal{P}=\{D_j\}_{j=1}^{\bar{N}}$ is a minimal partition of $\BR^2$, $\pa\mathcal{P}$ is a minimal cone, i.e. $\pa\mathcal{P}$ is scaling invariant. 
\vspace{5pt}

\textbf{Proof of the Claim. }We follow the proof of Modica \cite[Theorem 3]{modica1979gamma} with the necessary modifications to fit our setting. First we establish the compactness in $L^1_{loc}(\BR^2)$. 

Since 
\begin{equation}\label{est: EtutBR}
\begin{split}
    E_t(u_t,B_r)&:=\int_{B_r}\left( \f{1}{2t}\vert \na u_t\vert ^2+tW(u_t) \right)\,dz \\
    &= \f{1}{t}\int_{B_{tr}}\left(  \f12\vert \na u\vert ^2+W(u) \right)\,dz\leq Cr,
    \end{split}
\end{equation}
where $C$ is independent of $r$, by a well-known estimate for minimizers (cf. \cite[Lemma 5.1]{afs-book}). By Baldo \cite[Proposition 2.1]{Baldo}, $\|u_t\|_{L^\infty(\BR^2)}<C$, and \eqref{est: EtutBR} we obtain compactness in $L^1(B_r)$, and further in $L^1_{loc}(\BR^2)$  by a diagonal argument. Utilizing \cite[Proposition 2.2 \& 2.4]{Baldo}, we conclude that for $u_0$ as in \eqref{L1 loc converge}, $\mathcal{P}$ is a minimal partition. To obtain that $\pa \mathcal{P}$ is a cone, following Modica, we repeat the argument by considering $u_0(tz)$ and the corresponding $\pa \mathcal{P}_t$. For this purpose we need the analog of Giusti \cite[Theorem 9.3]{giusti1984minimal} for minimal partitions, which are defined as flat chains of top dimension (see Fleming \cite{fleming1966flat} and White \cite{whitenotes}). The key points are 
\begin{itemize}
    \item[(1)] the flat norm coincides with the mass norm which in turn equals the $L^1$ norm,
    \item[(2)] the compactness theorem for flat chains,
    \item[(3)] the monotonicity formula,
\end{itemize}
which all can be checked (see e.g. the expository paper \cite{alikakos2013structure}).

Thus the claim is established. 

\vspace{5pt}

Since the only minimal cones of
dimension one in $\BR^2$ are the straight line and the triod, it suffices to show that $\pa \mathcal{P}$ is a triod instead of a straight line.

We argue by contradiction. Suppose $\pa \mathcal{P}$ is a straight line, then after a possible rotation we assume 
\beq
\begin{split}
&u_{t_k}\xrightarrow{L^1_{loc}(\BR^2)} u_0=\bar{a}_1\chi_{D_1}+\bar{a}_2\chi_{D_2},\\
&\{\bar{a}_1,\bar{a}_2\}\subset \{a_1,a_2,a_3\},\quad D_1=\{(x,y): y\geq 0\},\ D_2= \{(x,y): y<0\}. 
\end{split}
\eeq
By Caffarelli-C\'{o}rdoba density argument, the $L^1_{loc}$ convergence can be stengthened to uniform convergence away from $\{y=0\}$. Thus we have
\begin{align}
 \non   &\forall\, \e,\text{ there exists a }R=R(\e)\text{ such that }\\
\non    & \dist(u(z),\bar{a}_1)\leq \e,\quad \forall \ z\in B_R\cap \{y\geq \e R\},\\
\non    & \dist(u(z),\bar{a}_2)\leq \e,\quad \forall \ z\in B_R\cap \{y\leq  -\e R\},\\
\label{energy upper bound in the stripe}    &\int_{B_R} \left(\f12\vert \na u\vert^2+W(u)\right) \,dydx \leq R\sigma(2+\e).
\end{align}
Here the last estimate \eqref{energy upper bound in the stripe} follows from the estimate $\sigma \mathcal{H}^1(\pa\mathcal{P}\cap B_1)=\lim\limits_{k\ri\infty} E_{t_k}(u_{t_k}, B_1)$, thanks to the $\G$--convergence result in Baldo\cite{Baldo} that holds also without the mass constraint (see Gazoulis\cite{gazoulis}).

\begin{case}
When $a_1\in\{\bar{a}_1,\bar{a}_2\}$, without loss of generality we assume $\bar{a}_1=a_1$, $\bar{a}_2=a_2$. 
Take $\e$ to be a small constant whose value will be determined in the proof process. We will focus on the stripe $\{z=(x,y):\; x\in[-R,R],\,y\in[-\e R,\e R] \}$ (written as $[-R,R]\times [-\e R,\e R]$) where $R=R(\e)$. 

By Property (b) in Theorem \ref{main theorem} there are points $R_1,..., R_N \in [-R,R]\times [-\e R,\e R]$ ($N\geq \f{R}{32C_0}$) that satisfy
\begin{align*}
    &\vert u(R_i)-a_3\vert\leq \g,\quad \forall\ 1\leq i\leq N,\\
    &\dist(R_i,R_j)\geq 6C_0,\quad \forall\ 1\leq i,j\leq N,\\
    &\vert u(z)-a_3\vert\leq 2\g, \quad \forall\ z\in B(R_i, \f{\g}{M}).
\end{align*}

Define
\beqo
X=\bigcup\limits_{i=1}^N [x_i-\f{\g}{M},x_i+\f{\g}{M}].
\eeqo
Here $x_i$ is the abscissa of $R_i$.

If $\mathcal{H}^1(X)\geq \delta R$, for some small $\delta$ which will be determined later, then we can estimate
\begin{equation}\label{est: energy when X<delta R}
    \begin{split}
    E(u,B_R)&= \int_{-R\sqrt{1-\e^2}}^{R\sqrt{1-\e^2}}\int_{-\e R}^{\e R} \left(\f12\vert\na u\vert^2+W(u)\right) \,dydx\\
    &=\left(\int_X +\int_{[-R\sqrt{1-\e^2}, R\sqrt{1-\e^2}]\setminus X}\right) \,dx \left( \int_{-\e R}^{\e R} \left(\f12\vert\pa_y u\vert^2+W(u)\right) \,dy  \right)\\
    &\geq \delta R\cdot (2\sigma (1-C\g^2))+(2\sqrt{1-\e^2}R-\delta R)\sigma (1-C\e^2)\\
    &\geq R\sigma (2+\delta -2C\delta\g^2-2C\e^2)\\
    &\geq R\sigma (2+\f{\delta}{2}),
    \end{split}
\end{equation}
where to get last inequality we have required 
\beqo
2C\g^2\leq \f14,\quad 2C\e^2\leq \f{\delta}{4}.
\eeqo
Note that the first inequality in the estimate above is derived from the fact that when $x\in X$, on the vertical line $\{x\}\times [-\e R,\e R]$ $u$ is $\e$--close to $a_1, a_2$ at $(x,\e R),(x,-\e R)$ respectively, and is $\g$--close to $a_3$ at some middle point $(x,y)$. 

As a result, \eqref{est: energy when X<delta R} contradicts with \eqref{energy upper bound in the stripe} if we take $\e<\f{4}{\delta}$.

Otherwise $\mathcal{H}^1(X)<\delta R$. We define
\beqo
\tilde{X}=\bigcup\limits_{i=1}^N [x_i-2C_0,x_i+2C_0].
\eeqo
Note that we can assume $\g$ satisfies $\f{\g}{M}\leq C_0$. Then we have 
\beqo
\mathcal{H}^1(\tilde{X})< \f{2C_0M}{\g}\cdot \delta R.
\eeqo

For every $R_i$, by Property (c) in Theorem \ref{main theorem} there exists a point $P_i\in B(R_i,C_0)$ such that $\vert u(P_i)-a_1\vert\leq \g$. It follows that for any $z\in B(P_i,\f{\g}{M})$, $\vert u(z)-a_1\vert \leq 2\g$. This allows us to estimate the energy from below inside $B(R_i,2C_0)$. Actually after suitable translation and rotation we can assume 
\beqo
R_i=(0,0),\quad P_i=(x_i,0),\quad\text{for some }x_i\in(0,C_0].  
\eeqo
For any $y_0\in[-\f{\g}{M},\f{\g}{M}]$, the line segment $\{(x,y_0): 0\leq x\leq x_i\}$ satisfies 
\beqo
\vert u((0,y_0))-a_3\vert\leq 2\g,\quad \vert u((x_i,y_0))-a_1\vert\leq 2\g.
\eeqo
Thus by calculating the energy along these line segments that ``connect" $B(P_i,\f{\g}{m})$ and $B(R_i,\f{\g}{M})$, one has
\beq
\begin{split}
&\int_{B(R_i,2C_0)}\left( \f12\vert\na u\vert^2+W(u)  \right)\,dz\\
\geq & \int_{-\f{\g}{M}}^{\f{\g}{M}}\int_{0}^{x_i}\left( \f12\vert\pa_x u\vert ^2+W(u) \right)\,dxdy\\
\geq &\f{2\g}{M}(\sigma-C\g^2)\geq \f{\g}{M}\sigma.
\end{split}
\eeq

Note that since $\dist(R_i,R_j)\geq 6C_0$ for any $i\neq j$, all $B(R_i,2C_0)$ are pairwisely disjoint. We have
\begin{equation}\label{est energy when X>delta R}
    \begin{split}
       &\int_{B_R}\left( \f12\vert \na u\vert^2+W(u) \right)\,dz\\
       \geq & \int_{[-\sqrt{1-\e^2}R,\sqrt{1-\e^2}R]\setminus \tilde{X}}\int_{-\e R}^{\e R}\left(\f{1}{2}\vert\pa_y u\vert^2+W(u)  \right)\,dydx\\
       &\qquad\qquad+\sum\limits_{i=1}^N \int_{B(R_i,2C_0)}\left(\f12\vert\na u\vert^2+W(u)\right)\,dz\\
       \geq & (2-\e^2-\delta\cdot \f{2C_0M}{\g})(1-C\e^2)R\sigma +\f{\g}{M}\sigma\cdot N\\
       \geq & R\sigma\left[ (2-\e^2-\f{2C_0M}{\g}\delta)(1-C\e^2)+\f{\g}{32C_0M}  \right]\\
       \geq & R\sigma \left( 2+\f{\g}{32C_0M} -(2C+1)\e^2-\f{2C_0M}{\g}\delta \right)\\
       \geq &R\sigma\left(2+ \f{\g}{64C_0M} \right),
    \end{split}
\end{equation}
where in the last inequality we reuire that $(2C+1)\e^2<\f{\g}{128C_0M}$ and $\delta<\f{\g^2}{256\vert C_0M\vert ^2}$. If we further require that $\e<\f{\g}{128C_0M}$, then \eqref{est energy when X>delta R} contradicts with the upper bound \eqref{energy upper bound in the stripe}.

\end{case}

\begin{case}
If $\{a_2,a_3\}=\{\bar{a}_1,\bar{a}_2\}$, then for any $r\in(0,R)$, there exists $P(r)$ such that $\vert u(P(r))-a_1\vert\leq \g$. And $P(r)\in [-R,R]\times [-\e R,\e R]$. Let 
\beqo
r_i:=6iC_0,\quad P_i:=P(r_i),\quad i=1,2,...,N,\ N=[\f{R}{6C_0}].
\eeqo
Then $\{P_i\}_{i=1}^N$ is a sequence of points in the stripe $ [-R,R]\times [-\e R,\e R]$ and satisfy
\begin{align*}
    &\dist(P_i,P_j)\geq 6C_0,\\
  \forall\ & P_i,\ \exists\; z_i\in \overline{B}(P_i,C_0) \text{ such that }\min\{\vert u(z_i)-a_2\vert,\vert u(z_i)-a_3\vert\}\leq \g.
\end{align*}

Then one can replace $\{R_i\}$ by $\{P_i\}$ in the proof of Case 1 and get a contradiction using the same arguments.
\end{case}

Therefore, we conclude that $\bar{N}=3$ and that $\pa\mathcal{P}$ is a triod. 

Finally \eqref{convergence in main thm along a rays} follows from \cite[Proposition 5.6]{afs-book}, and from $\pa\mathcal{P}$ being the triod. The proof of Theorem \ref{main theorem} is complete.
\end{proof}

\appendix
\section{Proof of Lemma \ref{lemma:upper bound}}\label{app:upper bound}
For sufficiently small $\e$, we want to construct an energy competitor $u_{test}\in W^{1,2}(B_1,\BR^2)$ with the boundary condition $u_{test}\big|_{\pa B_1}=g_\e$ and show that 
\beq\label{energy for utest}
\int_{B_1}\left( \f{\e}{2}|\na u_{test}|^2+\f{1}{\e}W(u_{test}) \right)\,dz\leq 3\sigma+C\e,
\eeq
for a positive constant $C$ independent of $\e$. 

Let $c_0$ be as in \eqref{def of g_eps}. Set
\beqo
r_1:=c_0\e,\quad r_2:=1-c_0\e.
\eeqo

We define 
\beqo
\begin{split}
S(s,t;\theta_1,\theta_2)&:=\{(x,y)=(r\cos\theta,r\sin\theta):s\leq r\leq t,\theta_1\leq \theta\leq \theta_2 \},\\ &0\leq s<t\leq 1,\ \ 0\leq \theta_1<\theta_2\leq 2\pi.
\end{split}
\eeqo

Since $W$ satisfies (H2), we only need to construct $u_{test}$ and estimate its energy in the sector $S(0,1;\f{\pi}{3},\pi)$. Then one can do the same construction and estimate on the other two sectors $S(0,1;\pi,\f53\pi)$ and $S(0,1;0,\f{\pi}{3})\cup S(0,1;\f{5\pi}3,2\pi)$.

Recall that $U_{12}\in W_{loc}^{1,2}(\BR,\BR^2)$ is the 1D minimizer of the minimization problem 
\beqo
\min \int_{-\infty}^\infty \left( \f12|v'|^2+W(v) \right)\,d\eta,\quad  \lim\limits_{x\ri-\infty} v(\eta)=a_1,\ \lim\limits_{\eta\ri\infty} v(\eta)=a_2, \ v(\BR)\subset \BR^2\setminus \{a_1,a_2\}.
\eeqo
The properties of this 1D minimizer play an important role in our construction of $u_{test}$. 

Now we are ready to construct $u_{test}$. On $S(r_1,r_2; \f{\pi}{3},\f{2\pi}{3})$, we set 
\beqo
u_{test}=U_{12}(\f{r\sin(\f{\pi}{2}-\theta)}{\e})
\eeqo

By the exponential decay estimate for the minimizing connection $U_{12}$ (see \cite[Proposition 2.4]{afs-book}), we have the following estimates for $u_{test}$ on $\pa S(r_1,r_2;\f{\pi}{3},\f{2\pi}{3})$:
\begin{align}
\label{bdy data pi/3} &|u_{test}(r,\f{\pi}{3})-a_2|\leq Ke^{-k\f{r}{2\e}},  \ \text{ on }\{r_1\leq r\leq r_2, \theta=\f{\pi}{3}\}\\
\label{bdy data 2pi/3} &|u_{test}(r,\f{2\pi}{3})-a_1|\leq Ke^{-k\f{r}{2\e}},  \ \text{ on }\{r_1\leq r\leq r_2, \theta=\f{2\pi}{3}\}\\
\label{bdy data r1 1} & |u_{test}(r_1,\theta)-a_1|\leq Ke^{-kc_0\sin (\theta-\f{\pi}{2})}, \ \text{ on }\{r=r_1, \f{\pi}{2}\leq  \theta\leq \f{2\pi}{3}\},\\
\label{bdy data r1 2} & |u_{test}(r_1,\theta)-a_2|\leq Ke^{-kc_0\sin (\f{\pi}{2}-\theta)}, \ \text{ on }\{r=r_1, \f{\pi}{3}\leq  \theta\leq  \f{\pi}{2}\},\\
\label{bdy data r2 1} & |u_{test}(r_2,\theta)-a_1|\leq Ke^{-k\f{r_2\sin (\theta-\f{\pi}{2})}{\e}}, \ \text{ on }\{r=r_2, \f{\pi}{2}\leq  \theta\leq \f{2\pi}{3}\},\\
\label{bdy data r2 2} & |u_{test}(r_2,\theta)-a_2|\leq Ke^{-k\f{r_2\sin (\f{\pi}{2}-\theta)}{\e}}, \ \text{ on }\{r=r_2, \f{\pi}{3}\leq  \theta\leq  \f{\pi}{2}\}.
\end{align}
Here $K,k$ are constants that are independent of $\e$.

Note that in $S(r_1,r_2;\f{\pi}{3},\f{2\pi}{3})$, $u_{test}$ only depends on the $x$ variable, thus $|\pa_y u_{test}|=0$. We compute the energy
\begin{equation}\label{energy in r1 r2 pi/3 2pi/3}
\begin{split}
    &\int_{S(r_1,r_2,\f{\pi}{3},\f{2\pi}{3})} \left(  \f{\e}{2}|\na u_{test}|^2+\f{1}{\e}W(u_{test}) \right)\,dz\\
    =&\int_{S(r_1,r_2,\f{\pi}{3},\f{2\pi}{3})} \left(  \f{\e}{2}|\pa_x u_{test}|^2+\f{1}{\e}W(u_{test}) \right)\,dz\\
    \leq& \int_{r_1}^{r_2}\,dy \int_{-\f{\sqrt{3}}{3}y}^{\f{\sqrt{3}}{3}y}\left(    \f{\e}{2}|\pa_x u_{test}|^2+\f{1}{\e}W(u_{test}) \right)\,dx\\
    \leq &(r_2-r_1)\sigma\\
    \leq &(1-c_0\e)\sigma.
\end{split}
\end{equation}

Similarly we set respectively
\beqo
\begin{split}
&u_{test}=U_{31}(\f{r\sin{(\f76\pi-\theta)}}{\e}), \quad \text{on }S(r_1,r_2;\pi,\f{4\pi}{3}),\\
&u_{test}=U_{23}(\f{r\sin{(\f{11}6\pi-\theta)}}{\e}), \quad \text{on }S(r_1,r_2;\f53 \pi,2\pi).
\end{split}
\eeqo
In particular, when $\t=\pi$ it holds that
\beq
\label{bdy data pi} |u_{test}(r,\pi)-a_1|\leq Ke^{-k\f{r}{2\e}},\ \text{ on }\{r_1\leq r\leq r_2,\,\theta=\pi\}.
\eeq
With \eqref{bdy data 2pi/3} and \eqref{bdy data pi} we are able to define $u_{test}$ on $S(r_1,r_2;\f{2\pi}{3},\pi)$ by
\beqo
u_{test}(r,\theta):= u_{test}(r,\f{2\pi}{3})\f{\pi-\theta}{\f13\pi}+u_{test}(r,\pi)\f{\theta-\f23\pi}{ \f13\pi}.
\eeqo
And this construction can be extended to $S(r_1,r_2;\f{4\pi}{3},\f{5\pi}{3})$ and $S(r_1,r_2;0,\f{\pi}{3})$ in the same way. The energy in $S(r_1,r_2;\f{2\pi}{3},\pi)$ will be estimated in polar coordinates. 

\beq\label{energy in r1 r2 2pi/3 pi}
\begin{split}
    &\int_{S(r_1,r_2;\f{2\pi}{3},\pi)} \left( \f{\e}{2}|\na u_{test}|^2+\f{1}{\e}W(u_{test})  \right)\,dz\\
    =& \int_{\f{2\pi}{3}}^{\pi}\int_{r_1}^{r_2} \left( \f{\e}{2}(|\f{\pa u_{test}}{\pa r}|^2+\f{1}{r^2}|\f{\pa u_{test}}{\pa \theta}|^2)+\f{1}{\e}W(u_{test}) \right)r\,dr\,d\theta.
\end{split}
\eeq

Note that 
\begin{align*}
    &|\f{\pa u_{test}}{\pa r}|^2\leq 2(|\f{\pa u_{test}(r,\f{2\pi}{3})}{\pa r}|^2+|\f{\pa u_{test}(r,\pi)}{\pa r}|^2)\leq \f{1}{2\e^2} (|U_{12}'(-\f{r}{2\e})|^2+|U_{12}'(\f{r}{2\e})|^2)\leq \f{Ce^{-k\f{r}{\e}}}{\e^2},\\
    &\qquad\quad  |\f{\pa u_{test}}{\pa \theta}|^2\leq Ce^{-k\f{r}{\e}},\quad  W(u_{test}(r,\theta))\leq Ce^{-k\f{r}{\e}},
\end{align*}
where $C$ is a universal constant.

Substituting these into \eqref{energy in r1 r2 2pi/3 pi} yields
\beq\label{energy in r1 r2 2pi/3 pi final}
\begin{split}
    &\int_{S(r_1,r_2;\f{2\pi}{3},\pi)} \left( \f{\e}{2}|\na u_{test}|^2+\f{1}{\e}W(u_{test})  \right)\,dz\\
    \leq & \int_{\f{2\pi}{3}}^{\pi} \int_{c_0\e}^{1-c_0\e} \left(  \f{\e}{2}\left(  \f{Ce^{-k\f{r}{\e}}}{\e^2}+\f{Ce^{-k\f{r}{\e}}}{r^2} \right)+\f{1}{\e}Ce^{-k\f{r}{\e}} \right)r\,dr\,d\theta\\
    \leq& C(W,c_0)\e,
\end{split}
\eeq
for some constant $C(W,c_0)$ that does not depend on $\e$.

Now that $u_{test}$ has been already defined on the annulus $S(r_1,r_2;0,2\pi)$, we proceed to define $u_{test}$ in the inner ball $B_{r_1}$ and the outer layer $S(r_2,1; 0,2\pi)$. First of all we take $u_{test}$ to be the harmonic extension in $B_{r_1}$, with respect to its boundary data.
\beqo
\begin{split}
\Delta& u_{test}=0 \text{ in }B_{r_1},\\
u_{test}\big|_{\pa B_{r_1}} & \text{ is given by the construction on }S(r_1,r_2;0,2\pi).
\end{split}
\eeqo

It is not hard to verify that 
$$
|u_{test}|\leq C,\ \  |\nabla_T u_{test}|\leq \f{C}{\e} \ \text{ on }\pa B_{r_1},
$$
where $\na_T$ denotes the tangential derivative, $C=C(W,c_0)$ is a constant independent of $\e$. By elliptic regularity, $|u_{test}|$ and $\e|\na u_{test}|$ are also bounded by some universal constant $C$ inside $B_{r_1}$. Then we have 
\beq\label{energy in 0 r1}
\int_{B_{r_1}} \left( \f{\e}{2}|\na u_{test}|^2+\f{1}{\e}W(u_{test}) \right)\,dz
\leq \pi (c_0\e)^2\left(\f{\e}{2} (\f{C}{\e})^2+\f{1}{\e}C\right)\,dx\leq C(W,c_0)\e. 
\eeq

It remains to construct $u_{test}$ on the annulus $S(r_2,1;0,2\pi)$. Set
\beq
u_{test}(r,\theta)=\f{1-r}{c_0\e} u_{test}(r_2,\theta)+\f{r-r_2}{c_0\e}g_{\e}(\theta), \quad r_2\leq r\leq 1, \ \theta\in [0,2\pi).
\eeq
Here $g_\e$ is the boundary data on $\pa B_1$ defined by \eqref{def of g_eps} and $u_{test}(r_2,\theta)$ is given by the construction of $u_{test}$ on $\pa S(r_1,r_2;0,2\pi)$. We have
\begin{equation}\label{energy in r2 1 total}
    \begin{split}
     &\int_{S(r_2,1;\f{\pi}{3},\pi)} \left( \f{\e}{2}|\na u_{test}|^2+\f{1}{\e}W(u_{test})  \right)\,dz\\
    =&\left( \int_{\frac{\pi}{2}-c_0\e}^{\frac{\pi}{2}+c_0\e}+\int_{\f{\pi}{3}}^{\frac{\pi}{2}-c_0\e}+\int_{\frac{\pi}{2}+c_0\e}^{\f{2\pi}{3}}+\int_{\f{2\pi}{3}}^{\pi} \right) \int_{r_2}^{1} \left( \f{\e}{2}(|\f{\pa u_{test}}{\pa r}|^2+\f{1}{r^2}|\f{\pa u_{test}}{\pa \theta}|^2)+\f{1}{\e}W(u_{test}) \right)r\,dr\,d\theta.
    \end{split}
\end{equation}
We estimate each part separately. In $S(r_2,1;\frac{\pi}{2}-c_0\e,\frac{\pi}{2}+c_0\e)$, it holds that
\begin{align*}
     &\int_{\frac{\pi}{2}-c_0\e}^{\frac{\pi}{2}+c_0\e}\int_{r_2}^1 \left( \f{\e}{2}(|\f{\pa u_{test}}{\pa r}|^2+\f{1}{r^2}|\f{\pa u_{test}}{\pa \theta}|^2)+\f{1}{\e}W(u_{test}) \right)r\,dr\,d\theta.\\
     \leq& 2|c_0\e|^2 \cdot \left( \f{\e}{2}\bigg|\f{C}{\e}\bigg|^2+\f{1}{\e} C \right)\leq C\e;
\end{align*}

In $S(r_2,1; \f{\pi}{3},\frac{\pi}{2}-c_0\e)\cup S(r_2,1; \frac{\pi}{2}+c_0\e,\f{2\pi}{3})$, by \eqref{bdy data r2 1} and \eqref{bdy data r2 2} we have
\begin{align*}
     &\left(\int_{\f{\pi}{3}}^{\frac{\pi}{2}-c_0\e}+\int_{\frac{\pi}{2}+c_0\e}^{\f{2\pi}{3}}\right)\int_{r_2}^1 \left( \f{\e}{2}(|\f{\pa u_{test}}{\pa r}|^2+\f{1}{r^2}|\f{\pa u_{test}}{\pa \theta}|^2)+\f{1}{\e}W(u_{test}) \right)r\,dr\,d\theta.\\
     \leq& \left(\int_{\f{\pi}{3}}^{\frac{\pi}{2}-c_0\e}+\int_{\frac{\pi}{2}+c_0\e}^{\f{2\pi}{3}}\right)\int_{r_2}^1 \bigg(
     \f{\e}{2}\f{|u_{test}(r_2,\theta)-g_\e(\theta)|^2}{(c_0\e)^2}+\f{\e}{2r^2}(|\f{\pa u_{test}(r_2,\theta)}{\pa\theta}|^2+|\f{\pa g_\e(\theta)}{\pa \theta}|^2)\\
     &\qquad\qquad\qquad\qquad +\f{1}{\e} C|u_{test}(r_2,\theta)-g_\e(\theta)|^2\bigg)r\,dr\,d\theta\\
     \leq &C\int_{C_0\e}^{\f{\pi}{6}} \int_{r_2}^1 \left( \f{1}{\e} e^{-k\f{\sin\theta}{\e}}   \right)\,r\,dr\,d\theta \leq C\e;
\end{align*}

In $S(r_2,1;\f{2\pi}{3},\pi)$, we note that 
$$
u_{test}(r, \theta)-a_1= \f{r-r_2}{c_0\e} \left((u_{test}(r_2,\f{2\pi}{3})-a_1)\f{\pi-\theta}{\f13\pi}+(u_{test}(r_2,\pi)-a_1)\f{\theta-\f23\pi}{ \f13\pi}\right),
$$
which implies that 
\begin{align*}
     \int_{\f{2\pi}{3}}^{\pi}\int_{r_2}^1 \left( \f{\e}{2}(|\f{\pa u_{test}}{\pa r}|^2+\f{1}{r^2}|\f{\pa u_{test}}{\pa \theta}|^2)+\f{1}{\e}W(u_{test}) \right)r\,dr\,d\theta \leq Ce^{-\f{k}{\e}}\leq C\e.
\end{align*}

Therefore, \eqref{energy in r2 1 total} becomes
\beq\label{energy in r2 1 final}
\int_{S(r_2,1;\f{\pi}{3},\pi)} \left( \f{\e}{2}|\na u_{test}|^2+\f{1}{\e}W(u_{test})  \right)\,dz\leq C\e.
\eeq

Finally, using \eqref{energy in r1 r2 pi/3 2pi/3}, \eqref{energy in r1 r2 2pi/3 pi final}, \eqref{energy in 0 r1}, \eqref{energy in r2 1 final} we conclude that 
\beqo
\int_{S(0,1;\f{\pi}{3},\pi)}  \left( \f{\e}{2}|\na u_{test}|^2+\f{1}{\e}W(u_{test}) \right)\,dz\leq \sigma+ C(W,c_0)\e.
\eeqo
The energies on $S(0,1;\pi,\f{5\pi}{3})$ and $S(0,1;0,\f{\pi}{3})\cup S(0,1;\f{5\pi}3,2\pi)$ satisfy the same estimate. Adding them up leads to \eqref{upper bound}, which completes the proof.

\bibliographystyle{acm}
\bibliography{bib_triple_junction}

\begin{thebibliography}{10}

\bibitem{alikakos2011some}
{\sc Alikakos, N.}
\newblock {Some basic facts on the system $\Delta u-W_u(u)=0$}.
\newblock {\em Proceedings of the American Mathematical Society 139}, 1 (2011),
  153--162.

\bibitem{alikakos2013structure}
{\sc Alikakos, N.~D.}
\newblock On the structure of phase transition maps for three or more
  coexisting phases.
\newblock In {\em Geometric Partial Differential Equations proceedings}.
  Springer, 2013, pp.~1--31.

\bibitem{AF3}
{\sc Alikakos, N.~D., and Fusco, G.}
\newblock Density estimates for vector minimizers and applications.
\newblock {\em Discrete \& Continuous Dynamical Systems 35}, 12 (2015),
  5631--5663.

\bibitem{AF2}
{\sc Alikakos, N.~D., and Fusco, G.}
\newblock A maximum principle for systems with variational structure and an
  application to standing waves.
\newblock {\em Journal of the European Mathematical Society 17}, 7 (2015),
  1547--1567.

\bibitem{AF}
{\sc Alikakos, N.~D., and Fusco, G.}
\newblock {Sharp lower bounds for vector Allen-Cahn energy and qualitative
  properties of minimizes under no symmetry hypotheses}.
\newblock {\em arXiv preprint arXiv:2110.00388, to appear in the Bulletin of
  the Hellenic Mathematical Society\/} (2021).

\bibitem{afs-book}
{\sc Alikakos, N.~D., Fusco, G., and Smyrnelis, P.}
\newblock {\em Elliptic systems of phase transition type}.
\newblock Springer, 2018.

\bibitem{ambrosio2000entire}
{\sc Ambrosio, L., and Cabr{\'e}, X.}
\newblock Entire solutions of semilinear elliptic equations in $\mathbb{R}^3$
  and a conjecture of {D}e {G}iorgi.
\newblock {\em Journal of the American Mathematical Society 13}, 4 (2000),
  725--739.

\bibitem{Baldo}
{\sc Baldo, S.}
\newblock {Minimal interface criterion for phase transitions in mixtures of
  Cahn-Hilliard fluids}.
\newblock {\em Annales de l'Institut Henri Poincar{\'e} C, Analyse non
  lin{\'e}aire 7}, 2 (1990), 67--90.

\bibitem{bronsard1996three}
{\sc Bronsard, L., Gui, C., and Schatzman, M.}
\newblock {A three-layered minimizer in $\mathbb{R}^2$ for a variational
  problem with a symmetric three-well potential}.
\newblock {\em Communications on pure and applied mathematics 49}, 7 (1996),
  677--715.

\bibitem{chodosh2019lecture}
{\sc Chodosh, O.}
\newblock Lecture notes on geometric features of the {Allen--Cahn} equation
  ({Princeton}, 2019).

\bibitem{del2011giorgi}
{\sc Del~Pino, M., Kowalczyk, M., and Wei, J.}
\newblock On {D}e {G}iorgi's conjecture in dimension $n\geq 9$.
\newblock {\em Annals of Mathematics\/} (2011), 1485--1569.

\bibitem{del2013entire}
{\sc Del~Pino, M., Kowalczyk, M., and Wei, J.}
\newblock Entire solutions of the {A}llen-{C}ahn equation and complete embedded
  minimal surfaces of finite total curvature in $\mathbb{R}^3$.
\newblock {\em Journal of Differential Geometry 93}, 1 (2013), 67--131.

\bibitem{farina20111d}
{\sc Farina, A., and Valdinoci, E.}
\newblock 1d symmetry for solutions of semilinear and quasilinear elliptic
  equations.
\newblock {\em Transactions of the American Mathematical Society 363}, 2
  (2011), 579--609.

\bibitem{fleming1966flat}
{\sc Fleming, W.~H.}
\newblock Flat chains over a finite coefficient group.
\newblock {\em Transactions of the American mathematical society 121}, 1
  (1966), 160--186.

\bibitem{flores2001higher}
{\sc Flores, G., Padilla, P., and Tonegawa, Y.}
\newblock Higher energy solutions in the theory of phase transitions: a
  variational approach.
\newblock {\em Journal of Differential Equations 169}, 1 (2001), 190--207.

\bibitem{fonseca1989gradient}
{\sc Fonseca, I., and Tartar, L.}
\newblock The gradient theory of phase transitions for systems with two
  potential wells.
\newblock {\em Proceedings of the Royal Society of Edinburgh Section A:
  Mathematics 111}, 1-2 (1989), 89--102.

\bibitem{fusco2017layered}
{\sc Fusco, G.}
\newblock Layered solutions to the vector {A}llen-{C}ahn equation in
  $\mathbb{R}^2$. minimizers and heteroclinic connections.
\newblock {\em Communications on Pure and Applied Analysis 16}, 5 (2017), 1807.

\bibitem{fusco}
{\sc Fusco, G.}
\newblock {Minimizing under relaxed symmetry constraints: Triple and
  N-junctions}.
\newblock {\em Annali Scuola Normale Superiore - classe di Scienze\/} (2022).

\bibitem{gazoulis}
{\sc Gazoulis, D.}
\newblock {On the $\Gamma$-convergence of the Allen-Cahn functional with
  boundary conditions. (preprint)}.

\bibitem{ghoussoub1998conjecture}
{\sc Ghoussoub, N., and Gui, C.}
\newblock On a conjecture of {D}e {G}iorgi and some related problems.
\newblock {\em Mathematische Annalen 311}, 3 (1998), 481--491.

\bibitem{giusti1984minimal}
{\sc Giusti}.
\newblock {\em Minimal surfaces and functions of bounded variation}, vol.~80.
\newblock Springer, 1984.

\bibitem{guaraco2018min}
{\sc Guaraco, M.~A.}
\newblock Min--max for phase transitions and the existence of embedded minimal
  hypersurfaces.
\newblock {\em Journal of Differential Geometry 108}, 1 (2018), 91--133.

\bibitem{gui2008symmetric}
{\sc Gui, C., and Schatzman, M.}
\newblock Symmetric quadruple phase transitions.
\newblock {\em Indiana University mathematics journal\/} (2008), 781--836.

\bibitem{liu2017global}
{\sc Liu, Y., Wang, K., and Wei, J.}
\newblock Global minimizers of the {A}llen--{C}ahn equation in dimension $n\geq
  8$.
\newblock {\em Journal de Mathematiques Pures et Appliquees 108}, 6 (2017),
  818--840.

\bibitem{modica1979gamma}
{\sc Modica, L.}
\newblock {$\Gamma$}--convergence to minimal surfaces problem and global
  solution of {$\Delta u= 2 (u^3- u)$}.
\newblock In {\em Proceedings of the International Meeting on Recent Methods in
  Nonlinear Analysis (Rome, 1978)\/} (1979), pp.~223--244.

\bibitem{monteil2017metric}
{\sc Monteil, A., and Santambrogio, F.}
\newblock Metric methods for heteroclinic connections in infinite dimensional
  spaces.
\newblock {\em arXiv preprint arXiv:1709.02117\/} (2017).

\bibitem{pacard2012role}
{\sc Pacard, F.}
\newblock The role of minimal surfaces in the study of the {A}llen-{C}ahn
  equation.
\newblock {\em Geometric analysis: partial differential equations and surfaces
  570\/} (2012), 137--163.

\bibitem{pacard2013stable}
{\sc Pacard, F., and Wei, J.}
\newblock Stable solutions of the {A}llen--{C}ahn equation in dimension 8 and
  minimal cones.
\newblock {\em Journal of Functional Analysis 264}, 5 (2013), 1131--1167.

\bibitem{savin2009regularity}
{\sc Savin, O.}
\newblock Regularity of flat level sets in phase transitions.
\newblock {\em Annals of Mathematics\/} (2009), 41--78.

\bibitem{schatzman2002asymmetric}
{\sc Schatzman, M.}
\newblock Asymmetric heteroclinic double layers.
\newblock {\em ESAIM: Control, Optimisation and Calculus of Variations 8\/}
  (2002), 965--1005.

\bibitem{smyrnelis2020connecting}
{\sc Smyrnelis, P.}
\newblock Connecting orbits in hilbert spaces and applications to {PDE}.
\newblock {\em Communications on Pure \& Applied Analysis 19}, 5 (2020), 2797.

\bibitem{sternberg1988effect}
{\sc Sternberg, P.}
\newblock The effect of a singular perturbation on nonconvex variational
  problems.
\newblock {\em Archive for Rational Mechanics and Analysis 101}, 3 (1988),
  209--260.

\bibitem{sternberg1994local}
{\sc Sternberg, P., and Zeimer, W.~P.}
\newblock Local minimisers of a three-phase partition problem with triple
  junctions.
\newblock {\em Proceedings of the Royal Society of Edinburgh Section A:
  Mathematics 124}, 6 (1994), 1059--1073.

\bibitem{taylor1976structure}
{\sc Taylor, J.~E.}
\newblock The structure of singularities in soap-bubble-like and soap-film-like
  minimal surfaces.
\newblock {\em Annals of Mathematics 103}, 3 (1976), 489--539.

\bibitem{wang2017new}
{\sc Wang, K.}
\newblock A new proof of {S}avin's theorem on {A}llen--{C}ahn equations.
\newblock {\em Journal of the European Mathematical Society 19}, 10 (2017),
  2997--3051.

\bibitem{wei2012geometrization}
{\sc Wei, J.}
\newblock Geometrization program of semilinear elliptic equations.
\newblock {\em AMS/IP Stud. Adv. Math 51\/} (2012), 831--857.

\bibitem{whitenotes}
{\sc White, B.}
\newblock Lecture notes on flat chains and geometric measure theory, {S}tanford
  2012, (notes by otis chodosh).

\end{thebibliography}

\end{document}